\newcommand{\xcheck}{\check{x}}
\newcommand{\ycheck}{\check{y}}
\newcommand{\avec}{\vec{a}}
\newcommand{\xvec}{\vec{x}}
\newcommand{\yvec}{\vec{y}}
\title{Forcing as a computational process}
\author{Joel David Hamkins}
  \address[Joel David Hamkins]
{O'Hara Professor of Logic, University of Notre Dame, 100 Malloy Hall, Notre Dame, IN 46556 USA \&\ Associate Faculty Member, Philosophy, University of Oxford, UK}
  \email{jdhamkins@nd.edu}
  \urladdr{http://jdh.hamkins.org}
\author{Russell Miller}
  \address[Russell Miller]
          {The Graduate Center of CUNY, Ph.D.\ Programs in Mathematics \& Computer Science, 365 Fifth Avenue, New York, NY 10016, USA
           \& Queens College of CUNY, Mathematics Dept., 65-30 Kissena Blvd., Flushing, NY 11367, USA}
  \email{Russell.Miller@qc.cuny.edu}
  \urladdr{http://qcpages.qc.cuny.edu/\~{ }rmiller}
\author{Kameryn J. Williams}
\address[Kameryn J. Williams]{Bard College at Simon's Rock \\
84 Alford Rd \\
Great Barrington, MA 01230, USA}
\email{kwilliams@simons-rock.edu}
\urladdr{http://kamerynjw.net}
\thanks{We thank the anonymous referee for their helpful comments.}
\thanks{The second author was supported by NSF grant \# DMS-1362206, Simons Foundation grant \# 581896, and several PSC-CUNY research awards.}
\thanks{Commentary can be made about this article on the first author's blog at \href{http://jdh.hamkins.org/}{http://jdh.hamkins.org/forcing-as-a-computational-process}.}
\subjclass[2010]{03C57, 03E40}
\keywords{Forcing, computable structure theory}
\newtheorem*{theorem*}{Theorem}
\newtheorem{theorem}{Theorem}
\newtheorem{maintheorem}[theorem]{Main Theorem}
\newtheorem*{maintheorem*}{Main Theorem}
\newtheorem*{maintheorems*}{Main Theorems}
\newtheorem{corollary}[theorem]{Corollary}
\newtheorem*{corollary*}{Corollary}
\newtheorem*{corollaries*}{Corollaries}
\newtheorem{lemma}[theorem]{Lemma}
\newtheorem{question}[theorem]{Question}
\newtheorem*{question*}{Question}
\newtheorem*{questions*}{Questions}
\newtheorem*{mainquestion*}{Main Question} 
\newtheorem*{openquestion*}{Open Question} 
\newtheorem{proposition}[theorem]{Proposition}
\newtheorem{definition}[theorem]{Definition}
\newcommand{\QED}{\end{proof}}
\def\proclaim[#1]{{\bf #1}}
\def\BF#1.{{\bf #1.}}
\def\says#1:#2\par{\item[#1] #2\par}
\newcommand{\Levy}{L\'{e}vy}
\newcommand{\N}{{\mathbb N}}
\renewcommand{\P}{{\mathbb P}}
\newcommand{\Q}{{\mathbb Q}}
\newcommand{\R}{{\mathbb R}}
\newcommand{\Fbar}{{\overline{F}}}
\newcommand{\one}{\mathbbm{1}} 
\newcommand{\dotminus}{\mathbin{\text{\@dotminus}}}
\newcommand{\@dotminus}{%
  \ooalign{\hidewidth\raise1ex\hbox{.}\hidewidth\cr$\m@th-$\cr}%
}
\newcommand{\of}{\subseteq}
\newcommand{\set}[1]{\{\,{#1}\,\}}
\newcommand{\elesub}{\prec}
\newcommand{\dom}{\mathop{\rm dom}}
\newcommand{\Add}{\mathop{\rm Add}}
\newcommand{\image}{\mathbin{\hbox{\tt\char'42}}}
\newcommand{\satisfies}{\models}
\newcommand{\forces}{\Vdash}
\newcommand{\proves}{\vdash}
\newcommand{\smalllt}{\mathrel{\mathchoice{\raise2pt\hbox{$\scriptstyle<$}}{\raise1pt\hbox{$\scriptstyle<$}}{\raise0pt\hbox{$\scriptscriptstyle<$}}{\scriptscriptstyle<}}}
\newcommand{\smallleq}{\mathrel{\mathchoice{\raise2pt\hbox{$\scriptstyle\leq$}}{\raise1pt\hbox{$\scriptstyle\leq$}}{\raise1pt\hbox{$\scriptscriptstyle\leq$}}{\scriptscriptstyle\leq}}}
\newcommand{\boolval}[1]{\mathopen{\lbrack\!\lbrack}\,#1\,\mathclose{\rbrack\!\rbrack}}
\def\[#1]{\boolval{#1}}
\newbox\gnBoxA
\newdimen\gnCornerHgt
\newdimen\gnArgHgt
\def\gcode #1{%
\setbox\gnBoxA=\hbox{$#1$}%
\gnArgHgt=\ht\gnBoxA%
\ifnum     \gnArgHgt<\gnCornerHgt \gnArgHgt=0pt%
\else \advance \gnArgHgt by -\gnCornerHgt%
\fi \raise\gnArgHgt\hbox{\tiny$\ulcorner$} \box\gnBoxA %
\raise\gnArgHgt\hbox{\tiny$\urcorner$}}
\newcommand{\UnderTilde}[1]{{\setbox1=\hbox{$#1$}\baselineskip=0pt\vtop{\hbox{$#1$}\hbox to\wd1{\hfil$\sim$\hfil}}}{}}
\newcommand{\Undertilde}[1]{{\setbox1=\hbox{$#1$}\baselineskip=0pt\vtop{\hbox{$#1$}\hbox to\wd1{\hfil$\scriptstyle\sim$\hfil}}}{}}
\newcommand{\undertilde}[1]{{\setbox1=\hbox{$#1$}\baselineskip=0pt\vtop{\hbox{$#1$}\hbox to\wd1{\hfil$\scriptscriptstyle\sim$\hfil}}}{}}
\newcommand{\UnderdTilde}[1]{{\setbox1=\hbox{$#1$}\baselineskip=0pt\vtop{\hbox{$#1$}\hbox to\wd1{\hfil$\approx$\hfil}}}{}}
\newcommand{\Underdtilde}[1]{{\setbox1=\hbox{$#1$}\baselineskip=0pt\vtop{\hbox{$#1$}\hbox to\wd1{\hfil\scriptsize$\approx$\hfil}}}{}}
\newcommand{\Iff}{\mathrel{\Longleftrightarrow}}
\def\<#1>{\langle#1\rangle}
\newcommand{\TC}{\mathop{{\rm TC}}}
\newcommand{\ZFC}{{\rm ZFC}}
\newcommand{\ZF}{{\rm ZF}}
\newcommand{\KM}{{\rm KM}}
\newcommand{\GB}{{\rm GB}}
\newcommand{\GCH}{{\rm GCH}}
\newcommand{\DC}{{\rm DC}}
\newcommand{\cell}[1]{\boxit{\hbox to 17pt{\strut\hfil$#1$\hfil}}}
\newcommand{\head}[2]{\lower2pt\vbox{\hbox{\strut\footnotesize\it\hskip3pt#2}\boxit{\cell#1}}}
\newcommand{\boxit}[1]{\setbox4=\hbox{\kern2pt#1\kern2pt}\hbox{\vrule\vbox{\hrule\kern2pt\box4\kern2pt\hrule}\vrule}}
\newcommand{\Col}[3]{\hbox{\vbox{\baselineskip=0pt\parskip=0pt\cell#1\cell#2\cell#3}}}
\newcommand{\tapenames}{\raise 5pt\vbox to .7in{\hbox to .8in{\it\hfill input: \strut}\vfill\hbox to
.8in{\it\hfill scratch: \strut}\vfill\hbox to .8in{\it\hfill output: \strut}}}
\newcommand{\Head}[4]{\lower2pt\vbox{\hbox to25pt{\strut\footnotesize\it\hfill#4\hfill}\boxit{\Col#1#2#3}}}
\newcommand{\Dots}{\raise 5pt\vbox to .7in{\hbox{\ $\cdots$\strut}\vfill\hbox{\ $\cdots$\strut}\vfill\hbox{\
$\cdots$\strut}}}
\renewcommand{\UrlFont}{\sffamily}
\addcolon\nolinkurl{#1}}\iffieldundef{eprintclass}{}{\UrlFont{\mkbibbrackets{\thefield{eprintclass}}}}}
\addcolon\nolinkurl{#1}\iffieldundef{eprintclass}{}{\UrlFont{\mkbibbrackets{\thefield{eprintclass}}}}}}
\begin{document}

\begin{abstract}
We investigate how set-theoretic forcing can be seen as a computational process on the models of set theory. Given an oracle for information about a model of set theory $\<M,\in^M>$, we explain senses in which one may compute $M$-generic filters $G\of\P\in M$ and the corresponding forcing extensions $M[G]$. Specifically, from the atomic diagram of $M$ one may compute a generic filter $G$, from the $\Delta_0$-diagram of $M$ one may compute a presentation of the corresponding forcing extension $M[G]$ and its $\Delta_0$-diagram, and from the elementary diagram of $M$ one may compute the elementary diagram of $M[G]$. We also examine the information necessary to make the process functorial, and conclude that in the general case, no such computational process will be functorial---for any candidate process there will be different isomorphic presentations of a model of set theory $M$ that lead to non-isomorphic forcing extensions $M[G]$. Indeed, there is no Borel function providing generic filters that is functorial in this sense.
\end{abstract}

\maketitle

\section{Introduction}\label{Section.Introduction}

The method of forcing, introduced by Paul Cohen to show the consistency of the failure of the continuum hypothesis,
has become ubiquitous within set theory.
In this paper we analyze this method from the perspective of computable structure theory. To what extent is
forcing an effective process?

\begin{mainquestion*}
Given an oracle for a countable model of set theory $M$, to what extent can we compute its various forcing extensions $M[G]$?
\end{mainquestion*}

We answer this, considering multiple levels of information which might be given by an oracle, whether for the atomic diagram of the model or the $\Delta_0$ diagram or the full elementary diagram, and so forth. We shall generally consider models of set theory presented as structures on the natural numbers equipped with a binary relation for membership, augmented at times with further oracles for the various diagrams of the model.

\begin{maintheorem}[Forcing as a computational process]\label{MainTheorem}\ 
  \begin{enumerate}
    \item Given an oracle for the atomic diagram of a model of \ZF\ set theory $\<M,{\in^M}>$ and a notion of forcing $\P\in M$, there exists a computable procedure (using several further parameters) to decide membership in an $M$-generic filter $G\of^M\P$. 
    \item Given an oracle for the $\Delta_0$-diagram\footnote{We mean $\Delta_0$ in the sense of the \Levy{} hierarchy, as discussed after the theorem and in section~\ref{Section.The-Levy-Diagram}.} of $M$ and a forcing notion $\P\in M$, there exists a uniform computable procedure to decide such a generic filter $G$, the atomic diagram of a presentation of the corresponding forcing extension $M[G]$, and moreover the $\Delta_0$-diagram of $M[G]$.
    \item Given an oracle for the full elementary diagram of $M$ and a forcing notion $\P\in M$, there exists a uniform computable procedure to decide the elementary diagram of such a forcing extension $M[G]$, and this holds level-by-level for the $\Sigma_n$-diagrams.
  \end{enumerate}
\end{maintheorem}

\noindent These statements are proven in Section \ref{Section.Computing-forcing-extensions} as Theorems \ref{Theorem.Computing-generic-filter}, \ref{Theorem.Computing-atomic-diagram-of-extension}, and \ref{Theorem.Computing-diagram-of-extension} below. In Section~\ref{Section.Generic-multiverse} we will extend these results to look at the generic multiverse of a countable model of set theory, i.e.\ those models obtained from the original model by taking both forcing extensions and grounds, where taking grounds is the process inverse to building forcing extensions.  In Section~\ref{Section.Class-forcing} we also consider versions of these results for class forcing instead of set forcing.

In order to avoid a possible confusion, let us emphasize that in this article we use the term $\Delta_0$-diagram to refer to the set of formulae true in the model that are $\Delta_0$ in the \Levy{} hierarchy, rather than the arithmetical hierarchy; and similarly with higher levels of the hierarchy. When we write, say, $\Sigma_n$ without further elaboration, we will always intend $\Sigma_n$ in the \Levy{} hierarchy. In this hierarchy, which is standard in set theory, the $\Delta_0$-formulae are those whose quantifiers are bounded by sets, that is, of the form $\exists x \in y$ or $\forall x \in y$. This differs from $\Delta_0$ in the arithmetical hierarchy, whose formulae have quantifiers bounded by the order relation on $\omega$, that is, of the form $\exists x < y$ or $\forall x < y$. In Section~\ref{Section.Atomic-diagram-knows-little} we will show that very little can be computed from the atomic diagram of a model of set theory. From the atomic diagram we are not able even to compute relations as simple as $x \subseteq y$. We view this as evidence that for the computable structure theory of set theory the \Levy{} $\Delta_0$-diagram is an appropriate choice of the basic information to be used. The usual signature of set theory---just equality and the membership relation---is too spartan to say much of use. In Section~\ref{Section.The-Levy-Diagram} we introduce an expansion of the signature for set theory, which captures the strength of the \Levy{} $\Delta_0$-diagram. We show that the $\Sigma_n$-formulae in the \Levy{} hierarchy are precisely those that are arithmetically $\Sigma_n$ with that expanded signature. That is, when using the fuller signature the distinction between the two hierarchies disappears.

We end the paper, in Sections~\ref{sec:functoriality} and \ref{Section.Non-functoriality}, by investigating how much
information is required to make the process of computing $M[G]$ from $M$ functorial.  Without significant detail
about the dense subsets of $\P$, it will not be so.  Recall that a \emph{presentation} of a countable structure $M$
is simply a structure isomorphic to $M$ whose domain is $\omega$.  The following theorem emphasizes the importance of not conflating the isomorphism type of $M$ with a specific presentation of $M$.

\begin{maintheorem}[Nonfunctoriality of forcing]\label{MainTheorem.Nonfunctoriality}
  There is no computable procedure and indeed no Borel procedure which performs the tasks of Main Theorem \ref{MainTheorem} in a uniform way so that distinct presentations of the model $M$ will result in isomorphic presentations of the forcing extension $M[G]$.
\end{maintheorem}

We also show that forcing can be made a functorial process by adding extra information to the signature of the model. Another way to achieve functoriality is to restrict to a special class of models, namely the pointwise definable models.

\section{The atomic diagram of a model of set theory knows very little}\label{Section.Atomic-diagram-knows-little}

In this section, we show that very little about a model of set theory can be computed from its atomic diagram. In particular, many basic set-theoretic relations are not decidable from the atomic diagram. As a warm-up, let us first see that the atomic diagram does not suffice to identify even a single fixed element.

\begin{proposition}\label{Proposition.Atomic-diagram-knows-little}
 For any countable model of set theory $\<M,\in^M>$ and any element $b\in M$, no algorithm will pick out the number representing $b$ uniformly given an oracle for the atomic diagram of a copy of $M$. 
\end{proposition}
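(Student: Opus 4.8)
The plan is to argue by diagonalization against a fixed purported algorithm, exploiting the fact that any halting oracle computation queries only finitely much of its oracle. Recall that a copy of $M$ is given by a bijection $c\colon\omega\to M$, and its atomic diagram is the relation $\set{(n,m) : c(n)\in^M c(m)}$ on $\omega$ (together with the trivial equality facts), so an oracle for the atomic diagram is exactly an oracle deciding membership among the code numbers. To ``pick out the number representing $b$'' means to output $c^{\inverse}(b)$. I would therefore show: for every Turing functional $\Phi$ there is a copy $c$ on which $\Phi$, given the atomic diagram of $c$ as oracle, either diverges or outputs a number other than $c^{\inverse}(b)$.

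Next I would run $\Phi$ against a presentation that I build by stages in response to its oracle queries, maintaining a finite injective partial map $\pi\colon\omega\rightharpoonup M$ with $b\notin\ran(\pi)$ throughout. Whenever $\Phi$ queries whether $n\in m$, if $n,m\in\dom(\pi)$ I answer according to $M$, i.e.\ by whether $\pi(n)\in^M\pi(m)$; if one of $n,m$ is fresh, I first extend $\pi$ by assigning that number to some element of $M\setminus(\ran(\pi)\union\set{b})$, which is possible since $M$ is infinite while $\ran(\pi)\union\set{b}$ is finite, and then answer. The crucial feature is that $b$ is never placed into the range during this phase, so the value $c^{\inverse}(b)$ remains completely undetermined.

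Finally, if $\Phi$ never halts it already fails to compute the representative, so suppose it halts and outputs some number $r$ after finitely many stages, with $\dom(\pi)$ now finite. I choose a number $s\notin\dom(\pi)$ with $s\neq r$ (possible as cofinitely many numbers remain available) and set $\pi(s)=b$, then extend $\pi$ to a genuine bijection $c\colon\omega\to M$ arbitrarily. Because $\pi\subseteq c$ and every query of $\Phi$ was answered in accordance with $c$, the computation $\Phi$ performs with oracle the atomic diagram of $c$ is identical to the one just simulated, so it again outputs $r$; yet $c^{\inverse}(b)=s\neq r$. Hence $\Phi$ fails on $c$.

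The only point requiring care---and the reason the statement holds for \emph{every} $M$ rather than only for models with rich automorphism groups---is the bookkeeping that keeps $b$ out of $\ran(\pi)$ while still answering all queries consistently with a single eventual copy. If $M$ happened to admit an automorphism moving $b$, one could instead produce two copies with literally the same atomic diagram but different representatives of $b$ and finish in one line; but since $M$ may be rigid (for instance pointwise definable), we cannot rely on automorphisms and must use the finite-use argument above, committing $b$'s code number only after the computation has fixed its finitely many queries and its output.
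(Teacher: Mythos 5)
Your proof is correct and rests on the same finite-use idea as the paper's: a halting oracle computation inspects only finitely much of the atomic diagram, so the presentation can be arranged to make the output wrong. The only difference is bookkeeping --- the paper runs the algorithm on a fixed copy and then perturbs it afterwards, whereas you build the copy adversarially and withhold the assignment of $b$ until the computation halts, which has the virtue of making the consistency of the final presentation with all answered queries automatic.
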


For example, given the atomic diagram of a copy of $M$, one cannot reliably find the empty set, nor the ordinal $\omega$, nor the set $\R$ of reals.

\begin{proof}
Fix $b\in M$ and fix an oracle for the atomic diagram of a copy of $M$. Suppose we are faced with an algorithm purported to identify $b$. Run the algorithm until it has produced a number that it claims is representing $b$. This algorithm inspected only finitely much of the atomic diagram of $M$. The number $b$ it produced has at most finitely many elements in that portion of the atomic diagram. But $M$ has many sets that extend that pattern of membership, and so we may find an alternative copy $M'$ of $M$ whose atomic diagram agrees with the original one on the part that was used by the computation, but disagrees afterwards in such a way that the number for $b$ now represents a different set in $M'$. So the algorithm will get the wrong answer on $M'$.
\end{proof}

This idea can be extended to characterize which relations on $M$ are computable from the atomic diagram. In particular, any such relation must contain both finite and infinite sets.

\begin{theorem}\label{Theorem.Sets-known-by-atomic-diagram}
Let $\<M,\in^M>\satisfies\ZF$ be a countable model of set theory and let $X \subseteq M^n$ for some $1 \le n < \omega$. The following are equivalent.
\begin{enumerate}
\item $X$ is uniformly 
relatively intrinsically computably enumerable in the atomic diagram of $M$. That is, there is a single computably enumerable operator which given the atomic diagram of a presentation of $M$  will output the copy of $X$ for that presentation.
\item Membership of each single $\avec$ in $X$ is witnessed by a finite pattern of $\in$ in the transitive closures of $\{a_0\}, \ldots, \{a_{n-1}\}$, and the list of finite patterns witnessing membership is enumeration-reducible
to the purely existential diagram of $M$.
\item There is an existential $\mathcal{L}_{\omega_1,\omega}$-formula $\varphi(x)$ which defines $X$, and the set of (finite) disjuncts in this formula is enumeration-reducible to the purely existential diagram of $M$.
\end{enumerate}
\end{theorem}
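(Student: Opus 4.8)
The plan is to prove the cycle $(1)\Rightarrow(2)\Rightarrow(3)\Rightarrow(1)$, viewing the whole statement as a uniform, parameter-free refinement of the Ash--Knight--Manasse--Slaman and Chisholm characterization of relatively intrinsically c.e.\ relations as those definable by a computable $\Sigma_1$ formula of $\mathcal{L}_{\omega_1,\omega}$. The genuinely set-theoretic content, and the main obstacle, is the restriction in (2) to patterns living in the transitive closures $\TC(\{a_0\}),\dots,\TC(\{a_{n-1}\})$: I must show that only the downward $\in$-structure below the $a_i$ can be used robustly by a c.e.\ operator, since membership information pointing upward or sideways from the $a_i$ is not preserved across presentations.

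First I would dispatch $(3)\Rightarrow(1)$. The key preliminary observation is that the arithmetical $\Sigma_1$-diagram of $M$ is itself computably enumerable from the atomic diagram: an arithmetical $\Sigma_1$ assertion in the language with names amounts to $\exists \bar z\,\theta(\avec,\bar z)$ with $\theta$ quantifier-free in $\{\in,=\}$, and such a witness, if it exists, is found by searching the atomic diagram. Consequently any set that is $\leq_e$ the $\Sigma_1$-diagram is enumerable directly from the atomic diagram, uniformly in the presentation. Given a $\Sigma_1$ formula $\varphi(x)=\bigvee_i \exists\bar y\,\theta_i(x,\bar y)$ whose set of disjuncts is $\leq_e$ the $\Sigma_1$-diagram, the operator enumerates the disjuncts (via the atomic diagram), searches for witnesses $\avec,\bar b$ making some $\theta_i$ true---a finite quantifier-free check decided by the atomic diagram---and enumerates $\avec$ when successful. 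Correctness and uniformity are then immediate, since $\avec\in X$ iff $M\satisfies\varphi(\avec)$.

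The heart of the argument is $(1)\Rightarrow(2)$. Fix a c.e.\ operator $\Phi$ that, on any presentation, enumerates $X$, and suppose $\avec\in X$, so that on the true presentation $\Phi$ enumerates $\avec$ after reading a finite portion $\sigma$ of the atomic diagram, recording the full $\in,=$-pattern among $\avec$ and finitely many auxiliary elements $b_1,\dots,b_k$. I would argue that the restriction of $\sigma$ to the transitive closures of the $\{a_i\}$ already witnesses membership, i.e.\ that every tuple realizing this downward pattern lies in $X$. The crux is a non-robustness lemma: any auxiliary $b_j$ not in $\TC(\{a_0\})\cup\cdots\cup\TC(\{a_{n-1}\})$ carries no information. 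Indeed, by the abundance of sets exploited in Proposition \ref{Proposition.Atomic-diagram-knows-little}---given any consistent finite membership demand, $M\satisfies\ZF$ supplies a set realizing it---one can, around any tuple $\avec^*$ with the same downward pattern, manufacture auxiliary elements $b_j^*$ reproducing all the upward and sideways relations recorded in $\sigma$. Were some such $\avec^*$ outside $X$, we could build an alternative presentation on which $\Phi$ sees exactly $\sigma$ at $\avec^*$ and wrongly enumerates it, contradicting correctness. Hence membership is decided by the finite transitive-closure pattern alone. This is the step I expect to require the most care, both in formalizing ``same downward pattern'' for a possibly ill-founded model and in carrying out the presentation-switching construction.

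Finally, for the bookkeeping of $(1)\Rightarrow(2)$ I would enumerate a downward pattern $P$ into the list precisely when simulating $\Phi$ on some finite atomic-diagram segment consistent with $M$ causes it to fire on a tuple whose downward part is $P$; enumerating the realized finite patterns from the $\Sigma_1$-diagram and running $\Phi$ shows the list is $\leq_e$ the $\Sigma_1$-diagram. For $(2)\Rightarrow(3)$ I would translate each finite transitive-closure pattern $P$ into the finitary existential formula $\exists\bar y\,\theta_P(x,\bar y)$ asserting exactly the recorded $\in$- and $\neq$-relations, with the $\bar y$ tied beneath the variables $x$ by descending membership chains so that they are forced into the transitive closure; the disjunction $\varphi(x)=\bigvee_P\exists\bar y\,\theta_P$ is then a $\Sigma_1$ formula of $\mathcal{L}_{\omega_1,\omega}$ defining $X$, whose disjuncts inherit $\leq_e$-reducibility to the $\Sigma_1$-diagram from the list of patterns, closing the cycle.
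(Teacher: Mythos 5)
Your proposal is correct, and its mathematical heart coincides with the paper's. The crucial direction $(1)\Rightarrow(2)$ is argued in the paper exactly as you argue it: any finite portion of the atomic diagram used by the operator splits into the part inside the transitive closures of $\{a_0\},\ldots,\{a_{n-1}\}$ and the part outside, and since a model of \ZF{} realizes every consistent finite ``outside'' pattern around every tuple, a presentation-switching argument shows the outside part carries no information; your $(2)\Rightarrow(3)$ translation and the witness-search behind $(3)\Rightarrow(1)$ likewise match the paper's $(2\Rightarrow 1)$ in substance. The only real difference is the decomposition: the paper proves $(1\Leftrightarrow 3)$ by citing the relativized Ash--Knight--Manasse--Slaman/Chisholm theorem and then proves $(1\Leftrightarrow 2)$ directly, whereas you run the cycle $(1)\Rightarrow(2)\Rightarrow(3)\Rightarrow(1)$ and reprove the standard directions by hand. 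What your route buys is self-containment and the explicit (and worth stating) observation that the arithmetical $\Sigma_1$-diagram is uniformly c.e.\ from the atomic diagram, which is precisely why enumeration-reducibility to it is the right hypothesis; what the paper's route buys is brevity and the full strength of the known $(1\Leftrightarrow 3)$ equivalence. One point to tighten in your bookkeeping for $(1)\Rightarrow(2)$: a finite atomic-diagram segment does not by itself determine which auxiliary elements lie in the transitive closures, so when you harvest the ``downward part'' of a firing computation you should keep only those auxiliaries certified by recorded descending $\in$-chains from the $a_i$ --- your non-robustness lemma is then exactly what licenses discarding the uncertified remainder. The paper elides this detail as well, so it is not a gap relative to the published argument.
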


\begin{proof}
$(1 \Leftrightarrow 3)$ is a standard fact in computable structure theory, first established in \cite{AKMS},
relativized here since $M$ does not have a computable presentation.

$(2 \Rightarrow 1)$ For notational simplicity we will present the argument for the $n = 1$ case. Suppose membership of $a$ in $X \subseteq M$ is witnessed by a finite pattern of $\in$ in the transitive closure of $\{a\}$. That is, $a$ is in $X$ if and only if one of a certain list of finite graphs can be found in the pointed graph $(\TC(\{a\}),a,\in^M)$. By hypothesis
this list is $e$-reducible to the arithmetical $\Sigma_1$-diagram of $M$, which we can enumerate,
since we have the atomic diagram of $M$ as an oracle.  Therefore, we can list out these finite graphs,
one by one.  Meanwhile, we enumerate $\in^M$ and $M$, and continually check whether the most recent pair in $\in^M$ has completed a copy of a graph from this list. If such happens, then we search through the $a \in M$ we have already enumerated and check whether $a$ has one of these graphs appearing below its transitive closure. We output all the $a$ for which this happens. We also check whether the most recent $a \in M$ has a copy of one of the graphs in the portion of its transitive closure so far enumerated. If that happens, we output $a$. This process will find every $a \in M$ which has a copy of this graph below its transitive closure, so it will exactly enumerate $X$.

$(1 \Rightarrow 2)$ Again, we will we present the proof for the $n = 1$ case for notational simplicity. Assume that $X$ is uniformly relatively intrinsically computable enumerable from the atomic diagram of $M$. The conclusion that an $a$ lies in $X$ must be based on a finite portion of the graph $\in^M$ where we are only allowed to know the index of $a$ itself. We can separate this information into two pieces, that inside the transitive closure of $\{a\}$ and that outside. We claim that only the first information can be relevant, which then implies $(2)$. To see this, observe that every possible finite pattern of finite graphs (with $a$ as a constant) occurs outside of the transitive closure of $\{a\}$ in every model of set theory. So the information outside is true for every $a \in M$ and thus cannot have any effect on the decision of whether to output $a$ in the enumeration.
\end{proof}

As a consequence of this theorem, the atomic diagram of a model of set theory does not know any of the predicates
and relations named in Lemma \ref{lemma:Delta1} below, which are each easily checked to fail item $(2)$ of Theorem~\ref{Theorem.Sets-known-by-atomic-diagram}.
In short, the atomic diagram of a model of set theory knows little about the model.
Before addressing forcing, therefore, we now present a natural expansion of the signature,
under which the atomic diagram will present the information that set theorists normally consider to be basic.

\section{The \Levy{} diagram}\label{Section.The-Levy-Diagram}

The signature ordinarily used in set theory is simple: it consists of the binary relation $\in$ along with equality. This
signature suffices all by itself to express all of mathematics (possibly using assumptions beyond \ZFC). But from the perspective of computable structure theory we have observed that we cannot say much using just the atomic diagram in this signature. As we see it, the lesson here is that this atomic diagram is too weak to take as the basic information for the computable structure theory of models of set theory. Instead, we will work with the $\Delta_0$-diagram, in the sense of the \emph{\Levy\ hierarchy}, in which set-bounded quantification is not counted when determining quantifier complexity of a formula. 

This terminology and usage conflicts, unfortunately, with the standard convention in computability theory, where ``bounded quantification'' refers to bounded in the sense of arithmetic---i.e.\ of the form $\exists x<y$ or $\forall x<y$---rather than bounded in the sense of a relation of the structure. In short, $\Delta_0$ in the \Levy{} hierarchy is not the same as $\Delta_0$ in the arithmetical hierarchy for the presentation of the structure, as $\forall x\in a\ \varphi(x)$ may express infinitely many independent atomic statements.

In this section we describe how we can make the \Levy{} hierarchy line up with the arithmetical hierarchy, by
expanding the signature to capture the content of the (\Levy) $\Delta_0$-diagram. Let us call this expanded signature the \emph{\Levy{} signature}. This provides an alternate way to think of the $\Delta_0$-diagram. One may instead work with the atomic diagram in this expanded signature, with the correspondence between the two being effective. And this correspondence continues upward, with $\Sigma_n$ in the \Levy{} hierarchy corresponding to $\Sigma_n$ in the arithmetical hierarchy with this expanded signature. In the sequel, we will speak of the $\Delta_0$-diagram or $\Sigma_n$-diagram, referring to the \Levy{} hierarchy, but the reader who prefers the other approach may freely translate over the statements.\footnote{As further evidence for the naturalness of this expansion of the language, we note that it has arisen in other contexts. Venturi and Viale \cite{Venture-Viale:submitted} studied model companions for set theory. For their work they also found it helpful to expand the signature from the spartan $\{\in\}$ to include symbols for each $\Delta_0$ relation.}

\begin{definition}\label{Definition.Levy.signature}
The \emph{\Levy{} signature} contains the binary relation symbol $\in$ and also, for each $\Delta_0$-formula $\varphi(x_1,\ldots,x_n)$, an $n$-ary predicate $R_{\varphi}$.  Structures in the \Levy{} signature are assumed to satisfy the stipulative definition axiom schema (over all $\Delta_0$-formulae): 
$$ \forall \vec x\ [R_{\varphi}(\vec x) \ \Iff \ \varphi(\vec x)].$$
\end{definition}
For example, since inclusion is defined by a $\Delta_0$-formula using only $\in$, we see that 
$$ M\models b\subseteq c \ \Iff \ M\models \forall y\in b\ y\in c
\ \Iff \ M\models R_{(\forall y\in x_1)y\in x_2}(b,c).$$
Since we can enumerate the $\Delta_0$-formulae effectively, the language of the \Levy{} signature is a computable language, and it includes equality (defined by $x\subseteq y \land y\subseteq x$).

Our choice of signature allows us to imitate the usual set-theoretic convention under which $\Delta_0$ is the lowest possible complexity of a formula.
Additionally, sets defined by $\Sigma_n$-formulae in the \Levy{} hierarchy will all be arithmetically $\Sigma_n$
in models of \ZF\ in the \Levy{} signature, by the following standard lemma.
\begin{lemma}
\label{lemma.Levy.signature}
For every formula $\varphi(t_1,\ldots,t_n,x,y,z)$ in the \Levy{} hierarchy,
$$ \ZF \proves \forall\vec t\ 
\bigl[\forall x\in z\ \exists y\ \varphi(\vec t,x,y,z) \Longleftrightarrow \exists Y\ \forall x\in z\ \exists y \in Y\ \varphi(\vec t,x,y,z)\bigr].$$ 
\end{lemma}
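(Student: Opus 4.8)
The plan is to recognize this statement as precisely an instance of the Collection schema, which is a theorem of $\ZF$, and to prove it by the standard rank-function argument. First I would dispatch the backward direction $(\Leftarrow)$, which is trivial and uses no axioms beyond logic: if there is a set $Y$ with $\forall x\in z\ \exists y\in Y\ \varphi(\vec t,x,y,z)$, then for each $x\in z$ the very same witness $y$ already satisfies $\exists y\ \varphi(\vec t,x,y,z)$. So the whole content lies in the forward direction $(\Rightarrow)$, where from the hypothesis that every $x\in z$ has some witness $y$ lying anywhere in the universe we must manufacture a single set $Y$ that contains, for each such $x$, at least one witness.

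For the forward direction I would use the cumulative hierarchy. Assume $\forall x\in z\ \exists y\ \varphi(\vec t,x,y,z)$. Working in $\ZF$, the axiom of Foundation guarantees $V=\bigcup_\alpha V_\alpha$, so every set has a well-defined rank and in particular every witness $y$ appears in some level $V_\alpha$. For each $x\in z$ I would then define
$$\rho(x)=\text{the least ordinal }\alpha\text{ such that }\exists y\in V_\alpha\ \varphi(\vec t,x,y,z).$$
This least ordinal exists for every $x\in z$ by the hypothesis together with Foundation, and $x\mapsto\rho(x)$ is a class function definable from the parameters $\vec t,z$.

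Next I would apply Replacement to this definable function on the set $z$: the image $\rho\image z=\{\rho(x):x\in z\}$ is a set of ordinals, so $\beta=\sup(\rho\image z)$ is an ordinal and $Y=V_\beta$ is a set, obtained by iterating Power set and Union up to $\beta$. For each $x\in z$ we have $\rho(x)\le\beta$, hence $V_{\rho(x)}\subseteq V_\beta=Y$; and by the very definition of $\rho(x)$ there is some $y\in V_{\rho(x)}$ with $\varphi(\vec t,x,y,z)$. Therefore $\forall x\in z\ \exists y\in Y\ \varphi(\vec t,x,y,z)$, which is what was to be shown.

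I expect no serious obstacle here: the result is a classical consequence of Replacement, Power set, and Foundation, and the derivation is entirely uniform in $\varphi$, so it constitutes a single proof scheme covering every formula $\varphi$ at once. In particular the \Levy{} complexity of $\varphi$ plays no role whatsoever, which is exactly why the lemma is available at every level of the hierarchy when normalizing $\Sigma_n$-formulae. The only points deserving care are the reliance on Foundation to know that each witness has a rank, and the observation that forming $V_\beta$ and taking the supremum of a set of ordinals are legitimate $\ZF$ operations. If one instead adopts a formulation of $\ZF$ in which Collection is taken as a primitive schema, the lemma is immediate, but I would present the Replacement-based derivation for definiteness.
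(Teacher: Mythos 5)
Your proof is correct and follows essentially the same route as the paper, which simply cites the Replacement schema for the forward direction and notes the reverse is trivial; your rank-function argument is the standard way of making that appeal to Replacement precise (since the hypothesis is not itself functional). No issues.
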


\begin{proof}
If $M \models \forall x\in z\ \exists y\ \varphi(\vec a,x,y,z)$ for a model $\<M,\in^M>$ of $\ZF$ with parameters $\vec a$, then by the Replacement axiom schema, there is some set $Y$ in $M$ such that $M \models \forall x\in z\ \exists y\in Y\ \varphi(\vec a,x,y,z)$.  The reverse direction is trivial.
\end{proof}

The lemma, applied repeatedly, allows us to turn every formula from the signature with just membership and equality into an equivalent formula which is in a prenex form, with all unbounded quantifiers preceding a $\Delta_0$ matrix, and which has the same complexity (in the \Levy{} hierarchy) as the original formula. Therefore, every formula of \Levy{} complexity $\Sigma_n$ may be expressed in the \Levy{} signature by a formula of arithmetic complexity $\Sigma_n$, and there is an effective procedure for producing the latter from the former. 

To conclude this section, let us remark that many basic predicates of set theory are $\Delta_0$, and that the basic predicates for forcing are $\Delta_1$ and hence computable from the $\Delta_0$-diagram.

\begin{lemma}
\label{lemma:Delta1}
For models $M$ of \ZFC, the following predicates are all $\Delta_0$.
\begin{multicols}{2}
\begin{itemize}[label={}]
\item $x=\emptyset$.
\item $x \subseteq y$.
\item $x = \{y,z\}$.
\item $x = \bigcup y$.
\item $x = \{ z \in y : \varphi(z) \}$, for $\varphi\in\Delta_0$.
\item $x$ is a (Kuratowski) ordered pair.
\item $x$ is a set of ordered pairs.
\item $x$ is a function.
\item $x$ is transitive.
\item $x$ is an ordinal.
\item $x$ is inductive.
\item $x=\omega$.
\end{itemize}
\end{multicols}
\end{lemma}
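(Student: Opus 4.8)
The plan is to exhibit, for each predicate on the list, an explicit defining formula all of whose quantifiers are set-bounded (of the form $\exists w\in t$ or $\forall w\in t$), proceeding down the list so that each item may reuse the $\Delta_0$-formulas already produced for the earlier ones. This reuse is legitimate because the collection of $\Delta_0$-formulae is closed under Boolean combinations, under bounded quantification, and under substituting a $\Delta_0$-predicate for an atomic subformula; hence any formula assembled from earlier $\Delta_0$-predicates inside a matrix of bounded quantifiers is again $\Delta_0$.

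Most of the items are then immediate. For example $x=\emptyset$ is $\forall w\in x\,(w\ne w)$, inclusion $x\subseteq y$ is $\forall w\in x\ w\in y$, the pair $x=\{y,z\}$ is $y\in x\land z\in x\land\forall w\in x\,(w=y\lor w=z)$, and the union $x=\bigcup y$ is $\forall w\in x\,\exists v\in y\ w\in v$ conjoined with $\forall v\in y\,\forall w\in v\ w\in x$. The separation instance $x=\{z\in y:\varphi(z)\}$ is $\forall z\in x\,(z\in y\land\varphi(z))\land\forall z\in y\,(\varphi(z)\to z\in x)$, which remains $\Delta_0$ exactly because $\varphi$ is assumed $\Delta_0$. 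Likewise transitivity is $\forall y\in x\,\forall z\in y\ z\in x$; being an ordinal is this together with the $\Delta_0$ clauses that $\in$ trichotomously orders $x$ and that every member of $x$ is transitive (well-foundedness being automatic from Foundation); and being inductive is $\exists e\in x\,(e=\emptyset)\land\forall y\in x\,\exists s\in x\,(s=y\cup\{y\})$, the inner $s=y\cup\{y\}$ unfolding into bounded quantifiers.

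I expect the ordered-pair clauses to be the only genuine subtlety, since the naive reading of ``$x$ is the Kuratowski pair $\{\{a\},\{a,b\}\}$'' opens with unbounded existentials over the coordinates $a,b$. The key observation is that these coordinates always lie inside $\bigcup x$: if $x=\{\{a\},\{a,b\}\}$ then $a\in\{a\}\in x$ and $b\in\{a,b\}\in x$. Hence one may bound the quantifiers and write
\[
  \exists a\in\bigcup x\ \exists b\in\bigcup x\ \exists p\in x\ \exists q\in x\,\bigl[\,p=\{a\}\land q=\{a,b\}\land\forall r\in x\,(r=p\lor r=q)\,\bigr],
\]
where $p=\{a\}$ and $q=\{a,b\}$ are instances of the pairing predicate already shown $\Delta_0$. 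Then ``$x$ is a set of ordered pairs'' is $\forall p\in x\,(p\text{ is an ordered pair})$, and ``$x$ is a function'' adjoins single-valuedness: whenever $p,q\in x$ are pairs with the same first coordinate their second coordinates coincide, now with all coordinate quantifiers ranging over $\bigcup\bigcup x$, since the coordinates of a pair $p\in x$ lie in $\bigcup p\subseteq\bigcup\bigcup x$.

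Finally, $x=\omega$ is obtained by combining earlier items: $x$ is inductive and every element of $x$ is a natural number, where ``$y$ is a natural number'' abbreviates ``$y$ is an ordinal and each of $y$ and its members is either $\emptyset$ or a successor $w\cup\{w\}$,'' all of which is bounded. Since any inductive set contains $\omega$, while an inductive set all of whose members are natural numbers is in turn contained in $\omega$, this formula pins down $\omega$ exactly, and the list is complete.
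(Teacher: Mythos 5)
Your proposal is correct, and it is exactly the standard argument the paper has in mind: the paper simply declares these facts standard and omits any proof, so there is no alternative route to compare against. You handle the one genuinely subtle point correctly, namely bounding the coordinates of a Kuratowski pair inside $\bigcup x$ (and inside $\bigcup\bigcup x$ for functions), with the understanding that $\exists a\in\bigcup x$ abbreviates the officially bounded $\exists p\in x\,\exists a\in p$.
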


We omit any proof of these standard facts. More pertinently for this article, let us briefly remark that the relations ``$x$ is a $\P$-name'', $p \forces \sigma \in \tau$, and $p \forces \sigma = \tau$ are all $\Delta_1$ and hence computable from the $\Delta_0$-diagram. This will be important in Section~\ref{Section.Computing-forcing-extensions} to see that the $\Delta_0$-diagram suffices to compute a presentation of the forcing extension.

\section{Computing forcing extensions}\label{Section.Computing-forcing-extensions}

For the sake of the reader who may not be an expert in set theory, we interleave our proof of the first main theorem with an exposition of forcing.\footnote{For full details we refer the reader to \cite{Jech:SetTheory3rdEdition} or \cite{Kunen1980:SetTheory}. See also \cite{Chow2007:ABeginnersGuideToForcing} for a conceptual overview.}
This exposition will follow the three parts of the main theorem. First we discuss generic filters and we show that a generic filter may be effectively built from just the atomic diagram. Next we discuss how to build the forcing extension $M[G]$ from the ground model $M$ and the generic $G$. Then we show that there is an effective procedure to construct (the atomic diagram of) the forcing extension from the $\Delta_0$-diagram of the ground model. Finally, we discuss how truth in the forcing extension is determined by the forcing relations in the ground model. In particular, we show that the elementary diagram of the forcing extension is effectively computable from the elementary diagram of the ground model. 

In the ground universe $M$, a partially ordered set $\P$ gives partial information about a certain ideal object, a generic filter, which can be thought to exist outside the given universe.  A condition $p$ is stronger than another condition $q$, written $p \le q$, if it gives more information about this outside object. It is convenient to assume that $\P$ has a maximum element $1$ which gives no information.
Two conditions are compatible, written $p \parallel q$, if there is a condition stronger than both of them. Otherwise, they are incompatible, written $p \perp q$. Formally, the outside object is an $M$-generic filter $G \subseteq \P$. That is, it  is downward directed and upward closed, and it meets every dense $D \subseteq \P$ in $M$. Here, $D \subseteq \P$ is dense if every condition in $\P$ can be strengthened to a condition in $D$. It is a straightforward exercise that if $\P$ is 
splitting---any condition $p$ extends to two incompatible conditions---then $M$ itself contains no $M$-generic filter. 

However, if $M$ is countable, we can always construct an $M$-generic filter $G \subseteq \P$ externally to $M$,
and the construction works uniformly for all $\P$ and uniformly in $\Delta_0(M)$ for any presentation of $M$.  We simply fix the first-encountered $p_0\in\P$
as our starting point (using the external order $<$ on the domain $\omega$ of $M$ to find it)
and then ask of each element $s\in\omega$ in turn whether the current $p_s$ can be extended to an
element of $\P\cap s$. This is a $\Delta_0$ question, and we either search for and find a $p_{s+1}\leq p_s$
lying in $s$ if the $\Delta_0(M)$ indicates that one exists, or else set $p_{s+1}=p_s$.
Finally, set $G = \{ q \in \P : q \parallel p_n$ for some $n\}$.  Of course, if $s$ was a dense
subset of $\P$, then we ensured that $G$ does meet $s$, so this $G$ is $M$-generic.

In fact, there exists a generic filter that is computable merely from the atomic diagram, as we now  show.

\begin{theorem}\label{Theorem.Computing-generic-filter}
Given an oracle for the atomic diagram of a model of set theory $\<M,\in^M>\satisfies\ZF$ and a notion of forcing $\P\in M$, there exists a computable procedure (using several further parameters) to decide membership in an $M$-generic filter $G\of\P$. 
\end{theorem}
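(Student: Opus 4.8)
The plan is to follow the $\Delta_0$-construction sketched above, but to replace every query about the forcing order by a purely positive, existentially quantified search with an atomic matrix---something the atomic diagram can run---and then to reorganize these searches so that every dense set is met even though we can no longer detect when a search ought to be abandoned. First I would extract from the given index of $\P$ the indices of the underlying set $P$ and the order relation $\le_\P$. Although Proposition~\ref{Proposition.Atomic-diagram-knows-little} forbids locating distinguished elements from the atomic diagram alone, here $\P$ is handed to us, and I can decode the Kuratowski pair $\P=\<P,\le_\P>$ positively: the first coordinate $P$ is the unique element lying in two distinct members of $\P$, confirmed by an atomic search for distinct $m_1,m_2\in^M\P$ and some $z\in^M m_1$ with $z\in^M m_2$, and $\le_\P$ is then the element $w\in^M m\in^M\P$ with $w\ne P$.

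The main obstacle is that, by Theorem~\ref{Theorem.Sets-known-by-atomic-diagram}, the relation $x\le_\P p$ is not even computably enumerable uniformly from the atomic diagram, since the witnessing ordered pair lies outside the transitive closures of $x$ and $p$. The key point is that once $\le_\P$ has been named the order becomes c.e.: for $x\ne p$ I would test
$$ x\le_\P p \iff \exists c\in^M\le_\P\ \exists a,b\in^M c\ \bigl(x\in^M a\ \wedge\ p\notin^M a\ \wedge\ x\in^M b\ \wedge\ p\in^M b\bigr). $$
Every $c\in\le_\P$ is a genuine Kuratowski pair, so a member $a$ containing $x$ but not $p$ forces $a=\{x\}$, a member $b$ containing both forces $b=\{x,p\}$, and together these pin down $c=\<x,p>$; conversely $c=\<x,p>$, $a=\{x\}$, $b=\{x,p\}$ witness the right-hand side when $x\le_\P p$, so the test is sound with no false positives. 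Crucially the asymmetry of the pair is detected by the \emph{negated} atomic fact $p\notin^M a$, which the oracle decides; this is precisely the information that the parameter-free setting of Section~\ref{Section.Atomic-diagram-knows-little} denied us. (Reflexivity $p\le_\P p$ may simply be assumed.)

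With the order c.e., I would build an $M$-generic descending sequence $p_0\ge_\P p_1\ge_\P\cdots$. Starting from any $p_0\in P$ (found by searching for $x\in^M P$), I dovetail, over all indices $i$ viewed as candidate dense sets $D_i$ and over all search bounds, the task of confirming an extension $x\le_\P(\text{current top})$ with $x\in^M D_i\cap P$; when such an $x$ is confirmed I adopt it as the next condition and record $D_i$ as met. The generic filter is then generated by this sequence, and I would verify that it is $M$-generic and computable from the atomic diagram.

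The second and, I expect, most delicate point is the bookkeeping. Unlike the $\Delta_0$-construction, I can no longer decide when the current condition \emph{cannot} be extended into $D_i$, and so cannot simply skip such a set and advance; a naive ordering either blocks on non-dense candidates (whose searches never return) or starves genuinely dense ones (whose witnesses are always outrun). The resource I would lean on is the persistence of genuine density---a set dense in $\P$ has an extension below \emph{every} condition, hence below every later top---so that a non-dense candidate never blocks, while a dense candidate remains confirmable below whatever top is current. Arranging a priority discipline that reconciles fairness among the dense requirements with the impossibility of detecting non-extendability, so that the resulting sequence is genuinely computable rather than merely $\Delta_2$ in the oracle, is the step I anticipate will require the most care.
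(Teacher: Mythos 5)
Your proposal attacks a strictly harder, more uniform problem than the theorem requires, and the hard parts of that harder problem are exactly the parts you leave open. The statement only asks for \emph{some} computable procedure given the oracle and $\P$, so the procedure may depend non-uniformly on the presentation; the paper exploits this by hard-coding, in addition to $\P$, the indices (as elements of the presented $M$) of the order $\leq_\P$, of its complement in $\P\times\P$, of the set $\mathcal D$ of \emph{all} dense subsets of $\P$ (which exists in $M$ by Separation), and of the incompatibility set $\perp_\P$. With these in hand, $\leq_\P$ becomes decidable (enumerate both it and its complement via Kuratowski unwrapping), the dense sets can be \emph{exactly} enumerated as $D_0,D_1,\dots$ by querying $d\in^M\mathcal D$, so each search for $p_{n+1}\leq_\P p_n$ in $D_{n+1}$ is guaranteed to terminate, and the resulting $G$ is decidable rather than merely c.e.\ (search in parallel for $p_n\leq_\P q$ or $(q,p_n)\in\perp_\P$; genericity guarantees one occurs). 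The paper is explicit, immediately after its proof, that these auxiliary indices cannot in general be computed from the atomic diagram and $\P$ alone --- which is precisely why it does not attempt your route.

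The gap in your route is the bookkeeping step you yourself flag: no priority discipline is actually exhibited, and the obvious ones fail. Since you cannot enumerate which candidates are dense, you must let lower-priority candidates act while a higher-priority dense candidate's search is still pending; but then a dense requirement $R_i$ can be starved --- each time the current condition moves (because some other candidate produced a quickly-confirmed witness), the witnesses for $D_i$ below the \emph{new} condition may always be confirmed too late in your c.e.\ enumeration of $\leq_\P$, and ``persistence of density'' guarantees only that witnesses exist, not that they are ever confirmed before the next move. Nothing in your write-up rules this out, and it is not clear that it can be ruled out from the atomic diagram plus $\P$ alone. A second, independent gap: even granting the descending sequence, your $G$ is only computably enumerable, not computable, since deciding $q\notin G$ requires recognizing $q\perp p_n$, a negative ($\Pi_1$) fact about the merely-c.e.\ order; the paper obtains decidability of $G$ only by hard-coding $\perp_\P$. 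Both gaps vanish if you simply allow yourself the finitely many extra indices the theorem's non-uniform phrasing permits. (Your positive c.e.\ test for $x\leq_\P p$ via the clause $p\notin^M a$ is fine --- indeed slightly more careful than the paper's own unwrapping --- but it is not enough by itself to carry the construction.)
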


\begin{proof}
Fix an oracle for the atomic diagram of $\<M,\in^M>$. Let $\P\in M$ be any notion of forcing in $M$. More specifically, we have in $M$ an element $\P\in M$ for the underlying set of the partial order, and we also have a set $\leq_\P$ in $M$ for what $M$ thinks is the set of ordered pairs for that relation. We assume that the Kuratowski pairing function  $(p,q)=\set{\set{p},\set{p,q}}$ is used when coding ordered pairs.

Notice that from the atomic diagram we can decide whether a given number $p$ represents an element of $\P$ or not, since we need only ask the oracle whether $p\in^M\P$. Further, we can computably enumerate the pairs $(p,q)$ of forcing conditions with $p\leq_\P q$. For this we must unwrap the Kuratowski pairing function, but this is possible as follows. We search for conditions $p,q$ in $\P$ and for elements $x\in{\leq_\P}$ that represent the pair $(p,q)$. This will happen when $x=\set{y,z}$, where $y=\set{p}$ and $z=\set{p,q}$. So we can search for the elements $y$ and $z$ which have $y\in^M x$ and $z\in^M x$ and $p\in^M y$ and $p,q\in^M z$. When this happens, we can be confident that $p\leq_\P q$. (This kind of computational inspection of the Kuratowski ordered pair also arose in \cite{GodziszewskiHamkins2017:Computable-quotient-presentations-of-models-of-arithmetic-and-set-theory}.)

But actually, we can fully decide the relation $\leq_\P$, not just enumerate it. The reason is that $M$, being a model of set theory, has what it thinks is the set of pairs $(p,q)\in\P^2$ with $p\not\leq_\P q$. And so for any pair $p,q$, we can search for it to be enumerated by $\leq_\P$ or by the analogous procedure applied with $\not\leq_\P$, and thereby decide whether $p\leq_\P q$ or not.

Next, let $\mathcal{D}\in M$ be the set that $M$ thinks is the set of all dense subsets of $\P$. From this data, we can enumerate the elements $D_0,D_1,D_2,\ldots$ of $\mathcal{D}$, listing all the elements of $M$ that $M$ thinks are dense subsets of $\P$. We simply run through all the natural numbers $d$, and ask the oracle whether $d\in^M \mathcal{D}$, and if so, we put it on the list.

Let us use this enumeration to compute a descending sequence of forcing conditions
 $$p_0\quad\geq_\P\quad p_1\quad\geq_\P\quad p_2\quad\geq_\P\quad\cdots$$
with $p_n\in D_n$. To begin, we let $p_0$ be the first-encountered element of $D_0$, which exists because $D_0$ is dense. Next, given $p_n$, we search through the natural numbers for the first-encountered condition $p_{n+1}\leq_\P p_n$ such that $p_{n+1}\in D_{n+1}$. There is always such a condition because $D_{n+1}$ is dense.

Finally, having constructed the descending sequence, we define $G$ as the set of conditions $q$ for which $p_n\leq q$ for some $n$. This is an $M$-generic filter, because it is a filter and it meets every dense subset of $\P$ in $M$. The elements of $G$ can be enumerated by the processes above, because whenever we find a condition $q$ for which $p_n\leq_\P q$ for some $n$, then we can enumerate $q$ into the filter.

But actually, we claim that $G$ is fully decidable from the oracle, not just enumerable. To see this, let $\perp_\P$ be the set in $M$ that $M$ thinks is the set of pairs $(p,q)$ of incompatible conditions $p\perp q$. By genericity, it follows that if $q\notin G$, it must be that $q\perp_\P p_n$ for some $n$, since it is dense to either get below $q$ or become incompatible with it. The algorithms above allow us to enumerate the pairs of incompatible conditions, and so for any condition $q$, we search for a $p_n$ for which either $p_n\leq_\P q$ or $p_n\perp_\P q$, and in this way we can tell whether $q\in G$ or $q\notin G$, as desired.
\end{proof}

The algorithm above is non-uniform in several senses. First, the algorithm makes use not only of the indices
of the forcing notion (namely, the set $\P$, the set $\leq_{\P}$ and the complement of $\leq_{\P}$ in $\P\times\P$),
but also the index of the set of dense subsets of $\P$, and the index $\perp_\P$ for the set of incomparable elements. The methods of Theorem \ref{Theorem.Sets-known-by-atomic-diagram} show that it is not possible in general to compute these just from the atomic diagram of $\<M,\in^M>$ and $\P$. Second, a more serious kind of non-uniformity arises from the fact that, even if we are given this additional data and even if we are given the full elementary diagram of the model $M$, the particular generic filter that we end up with will depend on the order in which $M$ is represented in this presentation. The filter $G$ is determined in part by the order in which the dense sets $D_n$ appear in the presentation of $M$. If one rearranges the dense sets, then at a certain stage, one might be led to place a different, incompatible condition on the sequence, and this will give rise to a different filter.
In Sections \ref{sec:functoriality} and \ref{Section.Non-functoriality}, we shall examine uniformity in more detail and prove there is no uniform computable procedure nor even Borel function that always
produces the same generic filter $G$ from all isomorphic presentations of the same model $M$.

We also want to remark that it mattered which pairing function we used. Given sets $x,y,p$ where we know $p$ is a Kuratowski ordered pair it requires looking at only finitely many objects to check whether $p = (x,y)$. On the other hand, there are $\Delta_0$ definitions for ordered pairs which we cannot computably unravel from just the atomic diagram. Consider the Morse pairing function 
$$(x,y)^\star = (\set{0} \times x) \cup (\set{1} \times y),$$
where the product here is defined via the Kuratowski ordered pair.\footnote{This pairing function has the nice property that if $x,y \subseteq V_\alpha$ where $\alpha$ is a limit ordinal, then $(x,y)^\star \subseteq V_\alpha$.}
This definition is $\Delta_0$ in the \Levy{} hierarchy. But to recognize whether $p = (x,y)^\star$ requires looking at infinitely many elements of the model when at least one of $x$ and $y$ is infinite, making it not effective (from just the atomic diagram). It follows from Theorem~\ref{Theorem.Sets-known-by-atomic-diagram} that the relation $p = (x,y)^\star$ is not decidable from the atomic diagram, even if we know $p$ is a Morse pair.
\smallskip

We next turn to the construction of the full forcing extension. 
Given a generic filter $G$, which we have already seen how to construct, we need to determine the rest of the sets that will constitute the forcing extension of $M$. New sets are given by names in $M$, which are then interpreted by the generic.
These $\P$-names are recursively defined as sets whose elements are pairs $(\sigma,p)$ with $\sigma$ a $\P$-name and $p \in \P$. This amounts to a recursive definition on rank, with each $\P$-name having an ordinal rank.

\begin{lemma}
\label{lemma:Pnames}
The property of being a $\P$-name is $\Delta_1$, hence decidable uniformly in the $\Delta_0$-diagram of $M$.
\end{lemma}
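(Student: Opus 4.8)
The plan is to exhibit the predicate ``$\sigma$ is a $\P$-name'' as both $\Sigma_1$ and $\Pi_1$ over $\ZF$ (with $\P$ as a parameter), so that it is $\Delta_1$; the last clause of the lemma then follows from the principle already invoked at the end of Section~\ref{Section.The-Levy-Diagram}, namely that a $\Delta_1$ predicate is decidable from the $\Delta_0$-diagram (a $\Sigma_1$ fact is enumerable from the $\Delta_0$-diagram and a $\Pi_1$ fact co-enumerable, so a $\Delta_1$ fact is decidable). Recall that $\sigma$ is a $\P$-name exactly when every element of $\sigma$ is a Kuratowski pair $(\tau,p)$ with $p\in\P$ and $\tau$ itself a $\P$-name; this is an $\in$-recursion whose single clause, apart from the recursive call, is $\Delta_0$ by Lemma~\ref{lemma:Delta1} (being an ordered pair and extracting its coordinates are $\Delta_0$, and $p\in\P$ is atomic with parameter $\P$). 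The coordinates $\tau,p$ of a pair $w$ can moreover be quantified boundedly, since $\tau,p\in\bigcup w$.

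For the $\Sigma_1$ side I would say that $\sigma$ is a $\P$-name iff there is a ``name-closed'' set $N$ collecting $\sigma$ together with all of its hereditary name-components: $\exists N\,[\sigma\in N \wedge \forall x\in N\,\forall w\in x\,\exists\tau\in N\,\exists p\in\P\,(w=(\tau,p))]$. The matrix here is $\Delta_0$, so the statement is $\Sigma_1$. If such an $N$ exists, then a routine $\in$-induction through $N$ shows that every member of $N$, and in particular $\sigma$, is a $\P$-name; conversely, if $\sigma$ is a $\P$-name one may take $N$ to be the set of all $\P$-names lying in $\TC(\{\sigma\})\cup\{\sigma\}$.

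For the $\Pi_1$ side I would instead show the complement is $\Sigma_1$, encoding a failure of name-hood as a finite descending witness. Call a finite sequence $\langle x_0,\ldots,x_k\rangle$ a \emph{name-chain} if $x_0=\sigma$ and each $x_{i+1}$ is a first coordinate of some element of $x_i$ (i.e.\ $\exists w\in x_i\,\exists p\in\bigcup w\,(w=(x_{i+1},p))$), and call a set \emph{bad} if it has an element not of the form $(\tau,p)$ with $p\in\P$. Then $\sigma$ fails to be a $\P$-name iff there is a name-chain from $\sigma$ ending in a bad set; quantifying existentially over the finite sequence (a single set) in front of a $\Delta_0$ matrix, this is $\Sigma_1$. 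The nontrivial direction is the forward one: if $\sigma$ is not a $\P$-name, I would invoke Foundation to choose a set $x$ of least rank that is reachable from $\sigma$ by a name-chain and is not a $\P$-name. Minimality forces $x$ to be locally bad, for were every element of $x$ a pair $(\tau,p)$ with $p\in\P$ and each such $\tau$ a name, then $x$ would be a name; hence some name-component $\tau$ of $x$ is not a name, but $\tau$ has strictly smaller rank and extends the chain, contradicting minimality. Thus the witnessing chain exists.

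The main obstacle I anticipate is exactly this $\Pi_1$ (complement-$\Sigma_1$) half: one must be sure that non-name-hood is always certified by a \emph{finite} piece of data, which is where well-foundedness of $\in$ is essential, and one must check that descending to a name-component stays within the bounded-quantifier budget (handled by $\tau,p\in\bigcup w$). By contrast the $\Sigma_1$ half, and the bookkeeping that each clause is genuinely $\Delta_0$, are routine given Lemma~\ref{lemma:Delta1}.
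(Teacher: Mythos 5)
Your proof is correct and follows essentially the same route as the paper's: both directions are handled by replacing the external transfinite recursion with an internal certificate quantified over existentially, your name-closed set $N$ playing the role of the paper's well-founded witnessing tree and your internally finite chain to a locally bad set playing the role of the paper's tree witnessing failure of the recursion. The rank-minimality argument you use to extract the bad chain, and your care that all auxiliary quantifiers stay bounded (via $\tau,p\in\bigcup w$), supply exactly the justification the paper's terser $\Pi_1$ half leaves implicit.
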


Naively, one might attempt to prove that the class of $\P$-names is $\Sigma_1$ by, given an element $x$, querying the $\Delta_0$-diagram to check that all elements of $x$ are pairs whose first coordinate is a $\P$-name, which is checked by querying the $\Delta_0$-diagram, and recursively continue this process until it halts. (And similarly to prove the class of $\P$-names is $\Pi_1$.) The trouble with this approach is that this recursive procedure is transfinite; even in the case it is internally finite, $M$ might be $\omega$-nonstandard and thus have internally finite sets which are externally seen to be infinite. Nevertheless, the key of this idea is correct, and it can be made into a proper proof. The point is that instead of externally carrying out the recursive procedure, we instead look inside $M$ for a certificate that the recursive procedure was carried out.

\begin{proof}[Proof of Lemma~\ref{lemma:Pnames}]
To see that being a $\P$-name is $\Sigma_1$, given a set $x$ we check whether $x$ is a $\P$-name by searching for a tree witnessing that $x$ satisfies the recursive definition of being a $\P$-name. Such a tree has $x$ as its root, and the immediate children of the root are the first coordinates of elements of $x$. These nodes then have their own children by the same process, and this continues downward through the whole tree. This tree is necessarily well-founded (in the sense of $M$), because $M$ thinks its membership relation is well-founded. And this tree witnesses that $x$ is a $\P$-name if the second coordinate of each node is in $\P$ and no node has an element not represented among its children. It is clear that given a tree $T$ it is $\Delta_0$ to check whether it satisfies this property of witnessing that $x$ is a $\P$-name, since this requires only quantifying over nodes in the tree and over $\P$. And thus the class of $\P$-names is $\Sigma_1$.

To see that it is also $\Pi_1$ we carry out a similar procedure, except now we look for a tree witnessing that the recursive definition of being a $\P$-name fails for $x$. Being such a tree is again $\Delta_0$, because failure is witnessed by looking at and below some node. This then gives that being a $\P$-name is a $\Delta_1$ property. 
\end{proof}

We would like to remark that this argument generalizes; any property defined by a set-theoretic recursion of $\Delta_0$ properties will be $\Delta_1$.
\smallskip

A standard approach to forcing is to define the interpretation of a name $\sigma$ by $G$, denoted $\sigma_G$, to be the set of $\tau_G$ for $(\tau,p) \in \sigma$ for some $p \in G$. But this recursive interpretation can only be carried out if $M$ is well-founded, limiting which models it can be applied to. Fortunately, there is an alternative construction we can use, which applies to all models and which matches the recursive interpretation construction in case $M$ is well-founded. 

Every set theorist knows that there are a pair of definable relations,
\begin{align*}
p &\forces_\P \sigma = \tau \\
p &\forces_\P \sigma \in \tau,
\end{align*}
which determine membership and equality in the forcing extension. Namely, $\sigma_G \in \tau_G$ if and only if there is $p \in G$ so that $p \forces_\P \sigma \in \tau$, while $\sigma_G = \tau_G$ if and only if there is $p \in G$ so that $p \forces_\P \sigma = \tau$.
We can use this property as the definition of the forcing extension. That is, define the following relations on the class of $\P$-names:
  $$\begin{array}{cl}
      \sigma =_G \tau\quad &\Iff \quad \exists p\in G\quad p\forces_\P \sigma=\tau \medskip\\ 
      \sigma \in_G \tau\quad &\Iff \quad \exists p\in G\quad p\forces_\P \sigma\in\tau.\\
  \end{array}
  $$
It is readily checked that the relation $=_G$ is an equivalence relation and indeed a congruence relation with respect to the relation $\in_G$. The forcing extension $M[G]$ can then be presented as the equivalence classes of names $[\sigma]_G$ by the relation $=_G$ with the membership relation induced by $\in_G$. This is essentially the Boolean ultrapower manner of constructing the forcing extension, rather than the value recursion method (see \cite{HamkinsSeabold:BooleanUltrapowers} for an account of how these constructions can differ).

Let us see that the above described process is indeed effective in the $\Delta_0$-diagram.

\begin{theorem}\label{Theorem.Computing-atomic-diagram-of-extension}
There is a uniform computable procedure that, given an oracle for the $\Delta_0$-diagram of a model of set theory $\<M,\in^M> \satisfies \ZF$ and a notion of forcing $\P \in M$, decides membership in an $M$-generic filter $G \of \P$ and then computes the atomic diagram of a presentation of the extension $M[G]$. Indeed, with this oracle it can decide the $\Delta_0$-diagram of this presentation.
\end{theorem}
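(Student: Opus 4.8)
The plan is to realize $M[G]$ via the Boolean-ultrapower presentation described just above the theorem, taking as domain a decidable set of $\P$-names and computing the induced relations $\in_G$ and $=_G$ through the forcing relation. Since the $\Delta_0$-diagram of $M$ computes the atomic diagram of $M$, Theorem~\ref{Theorem.Computing-generic-filter} already supplies an $M$-generic filter $G\of\P$ whose membership relation is \emph{decidable} from the oracle; fix such a $G$ throughout. By Lemma~\ref{lemma:Pnames} the class of $\P$-names is decidable from the $\Delta_0$-diagram, so we can decide which numbers of $M=\N$ represent $\P$-names, and these will index the elements of the presentation.

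The heart of the argument is deciding the atomic facts $\sigma=_G\tau$ and $\sigma\in_G\tau$. Here I would lean on two ingredients recorded earlier. First, the forcing relations $p\forces_\P\sigma=\tau$ and $p\forces_\P\sigma\in\tau$ are $\Delta_1$, hence decidable from the $\Delta_0$-diagram, and likewise $p\forces_\P\sigma\ne\tau$ and $p\forces_\P\sigma\notin\tau$ are $\Delta_1$ (their defining set-bounded quantifiers over $\P$ preserve $\Delta_1$, by the collection argument of Lemma~\ref{lemma.Levy.signature}). Second, genericity guarantees that for any two names the set of conditions deciding ``$\sigma=\tau$'' (respectively ``$\sigma\in\tau$'') is dense, so $G$ contains such a condition. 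To decide $\sigma=_G\tau$ I would then enumerate the elements of $G$ — possible because membership in $G$ is decidable — and for each $p\in G$ test, using the $\Delta_0$-diagram, whether $p\forces_\P\sigma=\tau$ or $p\forces_\P\sigma\ne\tau$. Density ensures the search halts, and the forcing theorem ($\sigma_G=\tau_G$ iff some $p\in G$ forces $\sigma=\tau$) ensures the returned answer is correct. The relation $\in_G$ is handled identically. This shows the atomic diagram of the name-structure $(\{\,\P\text{-names}\,\},\in_G,=_G)$ is decidable.

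To produce a genuine presentation on $\omega$ whose equality is identity, I would pass to canonical representatives: since $=_G$ is decidable, the predicate ``$\sigma$ is the $<$-least name in its $=_G$-class'' is a finite check over the numbers below $\sigma$, hence decidable, so the set of canonical names is a decidable subset of $\N$ that we can computably relabel onto $\omega$. Transporting $\in_G$ across this relabelling yields the desired presentation of $M[G]$ with decidable atomic diagram, and this whole procedure is uniform in the oracle and in $\P$.

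Finally, for the stronger assertion about the $\Delta_0$-elementary diagram I would replace the atomic forcing relations by $p\forces_\P\varphi(\sigma_1,\ldots,\sigma_k)$ for an arbitrary $\Delta_0$ formula $\varphi$. This relation is again $\Delta_1$, being defined by a $\Delta_0$ recursion from the atomic forcing relations, so both it and its negation are decidable from the $\Delta_0$-diagram, and for fixed names the conditions deciding $\varphi$ remain dense. The same search-through-$G$ argument then decides whether $M[G]\models\varphi([\sigma_1]_G,\ldots,[\sigma_k]_G)$, uniformly in $\varphi$ and the names. I expect the main obstacle to be bookkeeping rather than conceptual: one must check that the relevant forcing relations are uniformly $\Delta_1$ and that the density-of-deciding-conditions fact, which underwrites termination of every search, is available even for $\omega$-nonstandard $M$, where the naive recursive interpretation of names is unavailable and only the definable forcing relation can be used.
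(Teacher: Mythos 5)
Your proposal is correct and follows essentially the same route as the paper's proof: the Boolean-ultrapower presentation on $=_G$-classes of $\P$-names, decidability of the (negated) atomic forcing relations as $\Delta_1$ predicates, a search through the decidable filter $G$ for a deciding condition, and the observation that $p\forces_\P\varphi$ remains $\Delta_1$ for $\Delta_0$-formulae $\varphi$ (the paper proves this last point by induction on formulae, deferring it to the proof of Theorem~\ref{Theorem.Computing-diagram-of-extension}, but the content is the same). The only cosmetic difference is that you make the density-of-deciding-conditions argument explicit where the paper leaves it implicit.
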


\begin{proof}
The diagram of the forcing extension $M[G]$ will be in the full forcing language 
 $$\langle M[G],\in^{M[G]},\check M,\sigma\rangle_{\sigma\in M^\P},$$ 
with a predicate $\check M$ for the ground model and constants for all the $\P$-names $\sigma$; we denote the class of $\P$-names in $M$ by $M^\P$. The class $M^\P$ of $\P$-names was seen in Lemma \ref{lemma:Pnames}
to be computable from $\Delta_0(M)$.
The atomic forcing relations
\begin{align*}
p &\forces_\P \sigma = \tau \\
p &\forces_\P \sigma \in \tau,
\end{align*}
have complexity $\Delta_1$ with $\P$ as a parameter, because again these relations are the result of the solution of a recursion of a $\Delta_0$ property (see \cite{GitmanHamkinsHolySchlichtWilliams2020:The-exact-strength-of-the-class-forcing-theorem} for explicit discussion of the forcing-relation-as-solution-to-a-recursion perspective). Similarly the negated atomic forcing relations are also $\Delta_1$ with $\P$ as a parameter. 
So given the $\Delta_0$-diagram and knowing membership in the generic $G$ we can compute $=_G$, and thereby pick out representatives from the equivalence classes and compute a bijection onto $\omega=\dom{M[G]}$ from the collection of these classes: $0$ will
denote the class of the $<$-least $\P$-name, $1$ the class of the least $\P$-name not in the class of $0$, and so on. Similarly, we can also compute $\in_G$, giving us the atomic diagram of $M[G]$ (in the signature with just $\in$).

We delay the argument that we can decide the $\Delta_0$-diagram of $M[G]$ until after a discussion of truth in the forcing extension; see the proof of Theorem~\ref{Theorem.Computing-diagram-of-extension}.
\end{proof}

At last we come to truth in the forcing extension. As a first-order structure, truth in $M[G]$ is given by the usual Tarskian recursive construction. A key fact in forcing, however, is that truth in the forcing extension is closely tied to the ground model. We have already seen the start of this with the atomic forcing relations $p \forces \sigma \in \tau$ and $p \forces \sigma = \tau$. In general, for any formula $\varphi(x_0,\ldots,x_n)$ there is a definable relation $p \forces \varphi(\sigma_0,\ldots,\sigma_n)$ so that $M[G] \models \varphi((\sigma_0)_G,\ldots,(\sigma_n)_G)$ if and only if there is $p \in G$ so that $p \forces \varphi(\sigma_0,\ldots,\sigma_n)$. The map which sends $\varphi$ to the formula $p \forces \varphi$ is effective. And as we explain in the proof, this map does not increase complexity; if $\varphi$ is $\Delta_0$ then $p \forces \varphi$ is $\Delta_1$ and if $\varphi$ is $\Sigma_n$ (respectively $\Pi_n$) for $n \ge 1$ then $p \forces \varphi$ is $\Sigma_n$ (respectively $\Pi_n$).

Using these forcing relations, we can compute the elementary diagram of the extension if we are given the elementary diagram of the ground model. Indeed, this goes level by level.

\begin{theorem}\label{Theorem.Computing-diagram-of-extension}
There is a uniform computable procedure that, given an oracle for the full elementary diagram of a model of set theory $\<M,\in^M>\satisfies\ZF$ and a notion of forcing $\P\in M$, decides the full elementary diagram of the presentation of $M[G]$ built in Theorem \ref{Theorem.Computing-atomic-diagram-of-extension}.  Moreover, this goes level-by-level: given $\P$ and an oracle for the $\Sigma_n$-elementary diagram of $M$, for $n \ge 1$, it can decide the $\Sigma_n$-elementary diagram of this presentation.
\end{theorem}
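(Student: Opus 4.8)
The plan is to reduce truth in $M[G]$ to truth in $M$ via the definability of forcing, tracking complexity level by level. Concretely, I would split the argument into two ingredients. The first is a \emph{definability-of-forcing-with-complexity-bounds} lemma: the map $\varphi\mapsto(p\forces_\P\varphi)$ does not raise \Levy{} complexity for $n\ge 1$. The second is a uniform decision procedure that, using the already-decidable filter $G$ from Theorem~\ref{Theorem.Computing-generic-filter} together with an oracle for the diagram of $M$, decides truth of a given formula (at a fixed level) in the presentation of $M[G]$ built in Theorem~\ref{Theorem.Computing-atomic-diagram-of-extension}.

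For the first ingredient, I would define $p\forces_\P\varphi$ by the usual recursion on $\varphi$ and prove by simultaneous induction on $n$ and on formula structure that $\varphi\in\Sigma_n$ (resp.\ $\Pi_n$) implies $p\forces_\P\varphi\in\Sigma_n$ (resp.\ $\Pi_n$) for $n\ge 1$, the base case being that the atomic relations $p\forces_\P\sigma=\tau$ and $p\forces_\P\sigma\in\tau$ are $\Delta_1$. Boolean connectives preserve complexity immediately; the delicate cases are negation and the unbounded existential, handled through the dense-below characterizations
$$ p\forces_\P\neg\varphi \iff \forall q\leq_\P p\ (q\not\forces_\P\varphi), \qquad p\forces_\P\exists x\,\varphi(x) \iff \forall q\leq_\P p\ \exists r\leq_\P q\ \exists\sigma\ (r\forces_\P\varphi(\sigma)). $$
In the negation case, $q\forces_\P\varphi$ is $\Sigma_n$ by induction, so its negation is $\Pi_n$ and the set-bounded $\forall q\leq_\P p$ keeps it $\Pi_n$, matching $\neg\varphi\in\Pi_n$. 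In the existential case the unbounded $\exists\sigma$ over a $\Sigma_n$ matrix stays $\Sigma_n$, the bounded $\exists r\leq_\P q$ preserves $\Sigma_n$, and the outer bounded $\forall q\leq_\P p$ preserves $\Sigma_n$. This last point is the main obstacle: that set-bounded universal quantification does not push a $\Sigma_n$ formula up to $\Sigma_{n+1}$ is precisely the \ZF-provable collection recorded in Lemma~\ref{lemma.Levy.signature}, and verifying that every clause of the recursion respects the level (with negation toggling $\Sigma_n$ and $\Pi_n$ consistently) is where the real work lies.

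For the second ingredient, I would use that $G$ is not merely enumerable but \emph{decidable} from the oracle, and that each element of the domain $\omega$ of the presentation of $M[G]$ codes a class $[\sigma]_G$ from which a representative name $\sigma$ is recoverable. To decide whether $M[G]\satisfies\varphi(\vec\sigma_G)$ for a formula $\varphi$ with name-parameters $\vec\sigma$, I appeal to the truth lemma $M[G]\satisfies\varphi(\vec\sigma_G)\iff\exists p\in G\ (p\forces_\P\varphi(\vec\sigma))$ together with the standard fact that the conditions deciding $\varphi$ are dense, so that the generic $G$ contains some $p$ with $p\forces_\P\varphi$ or $p\forces_\P\neg\varphi$. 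The procedure therefore searches through $p\in G$ and, for each, queries the elementary diagram of $M$ to test $M\satisfies(p\forces_\P\varphi(\vec\sigma))$ and $M\satisfies(p\forces_\P\neg\varphi(\vec\sigma))$; genericity guarantees the search halts, and which of the two is found determines the truth value in $M[G]$.

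Finally, the level-by-level refinement follows by matching oracles to complexities. If $\varphi$ is $\Sigma_n$ then $p\forces_\P\varphi$ is $\Sigma_n$ and $p\forces_\P\neg\varphi$ is $\Pi_n$; since a decision oracle for the $\Sigma_n$-diagram of $M$ also decides $\Pi_n$ truths (by negation), both queries are answerable from the $\Sigma_n$-diagram alone, which yields the $\Sigma_n$-diagram of $M[G]$. Summing over $n$ gives the full elementary diagram, and pushing the same analysis down to the $\Delta_0$ level—where $p\forces_\P\varphi$ is $\Delta_1$ and hence computable from the $\Delta_0$-diagram—discharges the claim about the $\Delta_0$-diagram of $M[G]$ deferred from Theorem~\ref{Theorem.Computing-atomic-diagram-of-extension}. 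The extra symbols of the full forcing language, namely the constants $\sigma$ and the ground-model predicate $\check M$, are absorbed into the same forcing relation, as forcing is defined for formulas of that language as well.
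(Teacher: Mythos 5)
Your proposal is correct and follows essentially the same route as the paper: reduce truth in $M[G]$ to the forcing relation in $M$ via the truth lemma, prove by induction that $\varphi\mapsto(p\forces_\P\varphi)$ preserves $\Sigma_n$/$\Pi_n$ complexity for $n\ge 1$ (with the set-bounded universal quantifier tamed by Replacement/Collection, exactly the point the paper flags in its footnote), and then search $G$ for a deciding condition. Your second ingredient is in fact slightly more explicit than the paper's, which only says to search for $p\in G$ forcing $\varphi$; your observation that one must also test $p\forces_\P\neg\varphi$ (a $\Pi_n$ query, still answerable from the $\Sigma_n$-oracle) and that density of deciding conditions guarantees termination is the right way to turn that semi-decision into a decision.
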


\begin{proof}
By the process of Theorem~\ref{Theorem.Computing-atomic-diagram-of-extension} we can decide membership in a generic $G$ and compute the atomic diagram of a presentation of $M[G]$. Consider next the forcing relations $p\forces_\P\varphi(\sigma_0,\ldots,\sigma_n)$, where $p\in\P$ and $\varphi$ is an assertion in the language of set theory with $\P$-name parameters $\sigma_i\in M^\P$. For each formula $\varphi$, the corresponding forcing relation (as a relation on $p$ and the names $\sigma_i$) is definable in the ground model $M$. Furthermore, the proof that the forcing relations are definable is uniform, in the sense that from any formula $\varphi$, we can write down the formula defining the corresponding forcing relation.\footnote{We are not claiming that the forcing relations are uniformly definable in $M$, since indeed as $\varphi$ increases in complexity, the complexity of the definition of $p\forces\varphi(\sigma)$ similarly rises. Rather, we only claim here that there is a computational procedure that maps any formula $\varphi$ to the formula $\text{force}_\varphi(p,\sigma)$ defining the forcing relation ``$p\forces\varphi(\sigma)$'' in $M$.}
So it suffices to prove now that from the $\Sigma_n$-diagram of the ground model we can compute the $\Sigma_n$-diagram of the extension. Given the full diagram of the ground model, the same process will compute the full diagram of the extension.

First, we claim that the forcing relation $p \forces \varphi(\sigma_0, \ldots, \sigma_n)$ for any $\Delta_0$-formula $\varphi$ is complexity $\Delta_1$ in the \Levy\ hierarchy. This can be proved by induction on formulae. The only nontrivial case is the bounded-quantifier case $p\forces\exists x\in\tau\ \varphi(x,\sigma)$, which by the forcing recursion is equivalent to saying that there is a dense collection of conditions $q\leq p$ with some $\<\rho,r>\in\tau$ such that $q\leq r$ and $q\forces\varphi(\rho,\sigma)$. The point is that all these quantifiers remain bounded, and $\Delta_1$ is closed under bounded quantification in set theory. This establishes that given the $\Delta_0$-diagram of $M$ we can compute the $\Delta_0$-diagram of $M[G]$, completing the proof of Theorem \ref{Theorem.Computing-atomic-diagram-of-extension}.

We can now prove inductively that the forcing relation $p\forces\varphi(\sigma)$ for $\Sigma_n$-formulae $\varphi$ has complexity $\Sigma_n$, for $n\geq 1$, and similarly the forcing relation on $\Pi_n$-formulae is $\Pi_n$.\footnote{Note that this argument uses the Replacement axiom schema. This is as $p \forces \exists x \varphi(x)$ if and only if there are densely many $q \le p$ so that $q \forces \varphi(\tau)$ for some name $\tau$. It is the Replacement schema that allows us to pull the bounded quantifier over $q$ inside the unbounded quantifiers in the definition for $q \forces \varphi(\tau)$ to obtain a $\Sigma_n$-formula. \label{Footnote.Replacement}} 
We can tell if $M[G]\satisfies\varphi(\sigma_0,\ldots,\sigma_n)$, for $\varphi$ a $\Sigma_n$-formula, by looking for a condition $p\in G$ such that $M$ satisfies the assertion $p\forces_\P\varphi(\sigma_0,\ldots,\sigma_n)$.\footnote{Recall that in this context we are working in the full forcing language with constants for every $\P$-name, so it is sensible to ask whether $M[G]$ satisfies $\varphi(\sigma_0,\ldots, \sigma_n)$ where the $\sigma_i$ here are constant symbols referring to $(\sigma_i)_G$.}
Thus, the $\Sigma_n$-diagram of the forcing extension $M[G]$ can be computed from the $\Sigma_n$-diagram of $M$, for $n\geq 1$, as desired.
\end{proof}

We can also prove a version of this theorem for computable infinitary formulae. By definition the $\Sigma_0^c$-formulae are the Boolean combinations of atomic formulae---which, since we work in the \Levy\ signature, means precisely the $\Delta_0$-formulae---and these are also the $\Pi_0^c$-formulae.  For computable ordinals $\alpha$, the $\Sigma_{\alpha+1}^c$-formulae are those $\Sigma_{\alpha+1}$-formulae in $\mathcal{L}_{\omega_1,\omega}$ with finitely many free variables $\xvec$ that are computable (countable) disjunctions of formulae $\exists \yvec_m \gamma_m(\xvec, \yvec_m)$, with each $\gamma_m$ in $\Pi_{\alpha}^c$. (The length $k_m$ of the tuple $\yvec_m=(y_{m,1},\ldots,y_{m,{k_m}})$ of variables may vary over $m$, but must be computable from $m$.) The $\Pi_{\alpha+1}^c$-formulae are their negations. For computable limit ordinals $\alpha$, the $\Sigma_{\alpha}^c$-formulae are computable disjunctions in which each individual disjunct is $\Pi_{\beta}^c$ for some $\beta < \alpha$; any computable presentation of $\alpha$ may be used in this definition to give the $\beta$'s.

\begin{theorem} \label{Theorem.Computing-infinitary-diagram}
Fix a computable ordinal $\alpha$.  Then there is a computable function $f:\omega^2\to\omega$
such that, for every model of set theory $\<M,\in^M>\satisfies\ZF$
and every notion of forcing $p=\P\in M$, the function
$f(p,~\cdot~):\omega\to\omega$ is an $m$-reduction from the $\Sigma_\alpha^c$-diagram
of the structure $M[G]$ produced by the procedure in
Theorem \ref{Theorem.Computing-atomic-diagram-of-extension}
to the $\Sigma_\alpha^c$-diagram of $M$ itself.
(Here $p\in\omega=\dom{M}$ is the domain element $\P$.)
\end{theorem}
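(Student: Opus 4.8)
The plan is to push the forcing-relation analysis underlying Theorem~\ref{Theorem.Computing-diagram-of-extension} up through the computable infinitary hierarchy, extending the finitary translation $\varphi \mapsto (p \forces_\P \varphi)$ to all $\Sigma_\alpha^c$- and $\Pi_\alpha^c$-formulae while controlling both its \Levy\ complexity and its effectivity. The single function $f$ will be built from this translation: $f(\P,\cdot)$ sends a code for a $\Sigma_\alpha^c$-assertion about $M[G]$ to a code for the corresponding assertion about $M$ furnished by the forcing relation, and correctness will come from the forcing theorem, namely that $M[G]\satisfies\varphi(\vec\sigma)$ iff there is $p\in G$ with $M\satisfies (p\forces_\P\varphi(\vec\sigma))$.

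First I would define $p\forces_\P\varphi$ for computable infinitary $\varphi$ by recursion following the definition of the hierarchy. The $\Sigma_0^c=\Pi_0^c$ case (i.e.\ the $\Delta_0$-formulae, in the \Levy\ signature of Section~\ref{Section.The-Levy-Diagram}) is the atomic forcing relation already shown to be $\Delta_1$ in Theorem~\ref{Theorem.Computing-diagram-of-extension}. For a $\Sigma_{\beta+1}^c$-formula $\varphi=\bigvee_m \exists\yvec_m\,\gamma_m(\xvec,\yvec_m)$ with each $\gamma_m$ in $\Pi_\beta^c$, I set $p\forces_\P\varphi(\vec\sigma)$ to hold exactly when the conditions $q\le p$ forcing some disjunct $\exists\yvec_m\,\gamma_m$ (witnessed by some tuple of names) are dense below $p$; the $\Pi_{\beta+1}^c$ case is dual, and at a computable limit $\alpha$ a $\Sigma_\alpha^c$-formula is a computable disjunction of lower formulae whose forcing relation is the corresponding computable disjunction. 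The point to verify is that an index for the translated formula $\mathrm{force}_\varphi$ is produced by a single algorithm uniformly from an index for $\varphi$ and from $\P$, exactly as the map $\varphi\mapsto p\forces\varphi$ was effective in the finitary setting.

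The heart of the matter is a transfinite induction on $\beta\le\alpha$ showing that this translation sends $\Sigma_\beta^c$ to $\Sigma_\beta^c$ and $\Pi_\beta^c$ to $\Pi_\beta^c$ in the arithmetic hierarchy over the \Levy\ signature. The delicate step is the successor case: the assertion that $\{q\le p : q \text{ forces some disjunct}\}$ is dense below $p$ must come out $\Sigma_{\beta+1}^c$ and not one level higher. By the induction hypothesis the inner statement is a computable disjunction of $\Pi_\beta^c$-forcing statements, so the bounded ``$\forall$ extension $\exists$ stronger condition'' prefix would naively prepend an unbounded quantifier block; here I would invoke the Replacement schema exactly as in Footnote~\ref{Footnote.Replacement}, bounding the search for witnessing names and stronger conditions by a single set and thereby pulling the density quantifier inside to keep the result $\Sigma_{\beta+1}^c$. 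The limit case uses closure of $\Sigma_\alpha^c$ under computable disjunction, carried out along a fixed computable presentation of $\alpha$ so that the entire induction is effective and yields one algorithm rather than a separate procedure at each level.

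With the translation in hand, $f(\P,\varphi)$ is defined to output a code for the $\Sigma_\alpha^c$-assertion over $M$ expressing that some condition of the generic $G$ built in Theorem~\ref{Theorem.Computing-atomic-diagram-of-extension} forces $\varphi$; by the forcing theorem this holds in $M$ iff $\varphi$ holds in $M[G]$, and since $\Sigma_\alpha^c$ is closed under the requisite existential combination (the generic and the forcing relation both being available at this level, as in Theorems~\ref{Theorem.Computing-generic-filter} and~\ref{Theorem.Computing-atomic-diagram-of-extension}) the output is again $\Sigma_\alpha^c$. Because $\alpha$ is fixed in advance, no join over levels is needed and $f$ is a single total computable function of $(\P,\varphi)$. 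I expect the main obstacle to be precisely the complexity bookkeeping in the successor step---ensuring the density quantifier does not raise the level, which is exactly where Replacement is essential---together with executing the induction uniformly along a computable presentation of $\alpha$ so that effectivity survives at the limit stages.
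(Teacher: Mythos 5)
Your overall architecture (translate $\Sigma_\alpha^c$ assertions about $M[G]$ into $\Sigma_\alpha^c$ assertions about $M$, uniformly and computably) is the right target, but the route through an infinitary forcing relation with density clauses has a genuine gap at the successor step, and it is exactly at the point you flag as delicate. Your proposed clause ``$p\forces\bigvee_m\exists\yvec_m\gamma_m$ iff the conditions forcing some disjunct are dense below $p$'' has the shape $\forall q\le p\ \exists r\le q\ \bigvee_m\exists\vec\tau\ (r\forces\gamma_m(\vec\tau))$. The appeal to Replacement as in Footnote~\ref{Footnote.Replacement} cannot absorb the leading bounded universal quantifier here: Replacement is a schema of $M$ and applies only to relations defined by finitary first-order formulae of $M$'s language, whereas for $\beta\ge 1$ the matrix $r\forces\gamma_m(\vec\tau)$ is an external computable infinitary formula, not definable in $M$, so there is no instance of Replacement inside $M$ that collects the witnesses. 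Even at $\beta=0$, where each disjunct is $\Delta_0$ and hence $M$-definable, the computable disjunction $\bigvee_m$ blocks the collection argument, since the assignment $q\mapsto m(q)$ of a successful disjunct to each condition is external to $M$ (the sequence $\langle\gamma_m\rangle$ need not be an element of $M$, e.g.\ when $M$ is $\omega$-nonstandard); so $\forall q\ \bigvee_m(\cdots)$ is genuinely of $\Pi$-over-$\Sigma$ form and lands at least one level too high. As written, your induction does not close.

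The paper sidesteps the infinitary forcing relation entirely. It uses only two facts: every element of $M[G]$ is $[\tau]_G$ for a $\P$-name $\tau$, so unbounded existential quantifiers over $M[G]$ become unbounded existential quantifiers over names in $M$; and $\Delta_0(M[G])=\Psi^{\Delta_0(M)}$ for a fixed Turing functional $\Psi$, so by the finite-use property a single $\Delta_0$ fact about $M[G]$ is equivalent to ``there is a finite string $\rho$, true as an initial segment of $\Delta_0(M)$, on which $\Psi$ halts with output $1$''---a $\Sigma_1^c$ (and, using output $0$, also $\Pi_1^c$) assertion about $M$ involving only a computable disjunction over $(m,s,\rho)$ and existentials over names. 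The induction up the hierarchy is then purely syntactic and never places a universal quantifier in front of a computable disjunction. If you want to retain a forcing-relation flavor, you could repair your argument by dropping the density clause and translating $M[G]\models\bigvee_m\exists\yvec_m\gamma_m$ directly as $\bigvee_m\exists\vec\tau\ (M[G]\models\gamma_m(\vec\tau))$, recursing on $\gamma_m$ and anchoring the base case in the decidability of $\Delta_0(M[G])$ from $\Delta_0(M)$; the forcing theorem is then needed only at the atomic level, where the $\Delta_1$ bound has already been established.
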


\begin{proof}
Knowing that the characteristic function of $\Delta_0(M[G])$ is given as $\Psi^{\Delta_0(M)}$
for some Turing functional $\Psi$, we explain the $m$-reduction between the $\Sigma_1^c$-diagrams
claimed in the theorem.  A $\Sigma_1^c$-formula
$$\bigvee_{m\in\omega} \exists\yvec_m~\gamma_m([\sigma_1]_G,\ldots,[\sigma_k]_G,\yvec_m), $$
using a computable sequence $\langle\gamma_m\rangle_{m\in\omega}$ of $\Delta_0$-formulae about
a finite tuple from $M[G]$, holds in $M[G]$ just if there exist $m,s\in\omega$, elements
$[\tau_1]_G,\ldots,[\tau_{k_m}]_G$ in $M[G]$, and a finite initial segment $\rho\subseteq\Delta_0(M)$
such that $\Psi^{\rho}$ converges within $s$ steps on the G\"odel number of the formula
$\gamma_m([\sigma_1]_G,\ldots,[\sigma_k]_G,[\tau_1]_G,\ldots,[\tau_{k_m}]_G)$
and outputs $1$, meaning that this $\Delta_0$-formula holds in $M[G]$.  This constitutes
a $\Sigma_1^c$ statement about $M$ itself, quantifying over the $\rho\subseteq\Delta_0(M)$
which cause the program $\Psi$ to halt with value $1$ (as well as over $m$, $s$, and the $\P$-names).
Since we can compute an index for this $\Sigma_1^c$ statement about $M$ from the original
formula about $M[G]$, we have an $m$-reduction from $\Sigma_1^c(M[G])$ to $\Sigma_1^c(M)$,
as claimed.  This same function is an $m$-reduction from $\Pi_1^c(M[G])$ to $\Pi_1^c(M)$,
and analogous arguments hold with any larger computable ordinal $\alpha+1$ in place of $1$,
and also for limit ordinals.
\end{proof}

To close off this section, we remark that the same analysis applies to computing symmetric extensions, used to produce models where the axiom of choice fails. These extensions are generally obtained by restricting the $\P$-names to a certain class of symmetric names. Namely, we fix a group $G$ of automorphisms of $\P$ and a normal filter $\mathcal{F}$ on the subgroups of $G$ and then define a $\P$-name to be \emph{symmetric} if the subgroup of $G$ consisting of permutations that fix the name is in $\mathcal{F}$. The class of \emph{hereditarily} symmetric names are thus constructed by a transfinite recursion, akin to the recursion defining the names. This is again a recursion where each stage is $\Delta_0$, and so the class of hereditarily symmetric names is $\Delta_1$. Carrying out much the same argument as for full forcing extensions, one can obtain versions of Main Theorem~\ref{MainTheorem} \textit{(2)} and \textit{(3)} for symmetric extensions.

\section{The generic multiverse}\label{Section.Generic-multiverse}

In the previous section we investigated the computable structure theory of how a model $M$ of set theory relates to a single forcing extension $M[G]$. We turn now to a broader perspective. Given a model $\<M,\in^M>$ of set theory, the \emph{generic multiverse} of $M$ is the smallest collection of models of set theory which is closed under extension by forcing and by grounds, where $W$ is a \emph{ground} of $M$ if $M$ is a forcing extension of $W$ via a partial order in $W$. In this section we would like to investigate the extent to which the generic multiverse can be computed from a countable $M$, extending the analysis in Section \ref{Section.Computing-forcing-extensions}. Let us begin by looking at grounds.

\begin{lemma}\label{Lemma.Computing-grounds}
There is a uniform computable procedure which given an oracle for the $\Pi_2$-elementary diagram of a model of set theory $\<M,\in^M> \models \ZFC$ will compute a list of the $\Delta_0$-diagrams of the grounds of $M$. 
\end{lemma}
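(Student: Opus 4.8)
The plan is to reduce the lemma to the uniform ground-model definability theorem of set-theoretic geology, after which the $\Delta_0$-diagram of each ground comes essentially for free by absoluteness. Recall (Laver and Woodin, in the parametrized form of Fuchs, Hamkins, and Reitz) that there is a single first-order formula $\varphi(x,r)$ such that every ground $W$ of a model of $\ZFC$ has the form $W_r=\{x : \varphi(x,r)\}$ for some parameter $r$, and conversely the predicate ``$r$ is valid,'' meaning that $W_r$ is a ground, is first-order expressible. The crucial feature for us is the \Levy{} complexity: both the membership relation $x\in W_r$ and the validity predicate can be arranged to lie within $\Sigma_2\cup\Pi_2$, uniformly in $r$. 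Since the $\Pi_2$-elementary diagram of $M$ decides every $\Pi_2$ assertion with parameters, and hence by complementation every $\Sigma_2$ assertion as well, the oracle lets us decide both predicates.

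The procedure then runs as follows. Running through each $r\in\omega=\dom M$ in turn, we query the oracle to decide whether $r$ is valid, and we place the valid $r$ onto our list; the grounds of $M$ are then exactly the $W_r$ for the listed $r$ (possibly with repetitions, which is harmless for producing a list). For a fixed valid $r$ we must present $W_r$ on domain $\omega$ and decide its $\Delta_0$-diagram. Because each ground is a transitive inner model containing $\Ord^M$, the set $\{a\in M : M\satisfies\varphi(a,r)\}$ is an infinite, oracle-decidable subset of $\omega$; we enumerate it and re-index its elements as $0,1,2,\ldots$ to obtain a presentation of $W_r$ on domain $\omega$. Finally, given a $\Delta_0$-formula $\psi$ and elements $\avec$ of this presentation, we exploit that $\Delta_0$-formulae are absolute between the transitive class $W_r$ and $M$: we have $W_r\satisfies\psi(\avec)$ if and only if $M\satisfies\psi(\avec)$, and the latter is read off from the $\Delta_0$-diagram of $M$, which is itself computable from the $\Pi_2$-diagram since every $\Delta_0$-formula is in particular $\Pi_2$. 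This decides the $\Delta_0$-diagram of the presentation of $W_r$, uniformly in $r$, and assembling these over the listed $r$ gives the desired list.

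The main obstacle is the complexity bookkeeping underpinning the whole reduction: one must verify that the uniform definition of grounds can be arranged so that both $x\in W_r$ and ``$r$ is valid'' fall within the reach of the $\Pi_2$-diagram. This is exactly where the hypothesis of a $\Pi_2$ oracle, rather than merely a $\Delta_0$ or $\Sigma_1$ one, is used, and it rests on the approximation and cover properties: a ground is determined by a set parameter such as $(V_\delta)^{W}$, membership $x\in W_r$ is captured by a $\Sigma_2$/$\Pi_2$ condition asserting the existence of appropriate approximating and covering witnesses, and validity asserts that the resulting class is an inner model of $\ZFC$ of which $M$ is a set-forcing extension. Once these estimates are pinned down, the absoluteness argument above does the rest, and the procedure is visibly uniform in the oracle.
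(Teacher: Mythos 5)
Your proposal is correct and follows essentially the same route as the paper: invoke the Fuchs--Hamkins--Reitz ground-model enumeration theorem giving a uniform $\Pi_2$-definition $\varphi(x,r)$ of the grounds, decide membership in each $W_r$ by querying the $\Pi_2$-oracle, and read off the $\Delta_0$-diagram of each ground from that of $M$ by $\Delta_0$-absoluteness for transitive subclasses. The only cosmetic difference is that you posit a separate ``validity'' predicate for $r$, whereas the paper uses the version of the theorem in which $\{x:\varphi(x,r)\}$ is always either empty or a ground, so that validity reduces to the single $\Pi_2$ query $\varphi(\emptyset,r)$.
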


\begin{proof}
Because the membership relation of a ground of $M$ is the restriction of the membership relation of $M$ and thus $M$ and its grounds agree on $\Delta_0$ truth, all we need to compute is the domains of the grounds.
The key fact is the ground model enumeration theorem \cite[Theorem 12]{FuchsHamkinsReitz2015:Set-theoreticGeology}, which asserts that the grounds of a model of \ZFC\ are uniformly definable by a $\Pi_2$-formula. (See also \cite[Section~2]{BagariaHamkinsTsaprounisUsuba2016:SuperstrongAndOtherLargeCardinalsAreNeverLaverIndestructible}.) That is, there is a $\Pi_2$-formula $\varphi(x,r)$ so that for each $r$ either $\{ x : \varphi(x,r) \}$ is empty or else it is a ground. 
So given the $\Pi_2$-elementary diagram of $M$ we can compute whether $\{ x : M \models \varphi(x,r) \}$ is nonempty, say by checking whether $\varphi(\emptyset,r)$ holds. We can thus compute the set $\{ (n,x) : M \models \varphi(x,r_n) \}$ where $r_n$ is the $n$th element $r$ of $M$, according to the order on $\omega$, so that $\{ x : \varphi(x,r) \}$ is nonempty. From this we can get a list of the $\Delta_0$-diagrams of the grounds of $M$.
\end{proof}

Let us highlight the assumption in the statement of this theorem that $M$ satisfies the axiom of choice, an assumption that was missing in the results in Section~\ref{Section.Computing-forcing-extensions}. In \cite{gitmanjohnstone:groundmodels}, Gitman and Johnstone showed for an ordinal $\delta$ that $\DC_\delta$, a version of the principle of dependent choice which is weaker than the full axiom of choice, suffices to establish that ground models are definable for a certain class of forcings, namely those with a gap at $\delta$. (See their paper for definitions and details.) They conjectured that the ground model definability theorem fails for \ZF. This remains an open problem, but Usuba has recently achieved some partial results \cite{usuba2019}. We assumed $M$ satisfies the axiom of choice because in this case we do know that the grounds are uniformly definable. If Gitman and Johnstone's conjecture were to be refuted, then we could improve this theorem to assume the model satisfies only \ZF\ instead of \ZFC.

\begin{corollary}\label{Corollary.Computing-grounds}
Given an oracle for the full elementary diagram of a model of set theory $\<M,\in^M> \models \ZFC$, there is a computable procedure to compute a list of the full elementary diagrams of the grounds of $M$.
\end{corollary}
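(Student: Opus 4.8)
The plan is to upgrade the conclusion of Lemma~\ref{Lemma.Computing-grounds} from the $\Delta_0$-diagram to the \emph{full} elementary diagram of each ground, by exploiting the fact that every ground of $M$ is a definable inner model, so that its satisfaction relation reduces, via relativization, to satisfaction in $M$ itself. Since the full elementary diagram of $M$ decides formulas of arbitrary complexity with parameters, this reduction is immediately effective.

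First I would invoke Lemma~\ref{Lemma.Computing-grounds}: from the full elementary diagram of $M$ (which in particular computes the $\Pi_2$-diagram) we obtain the uniformly $\Pi_2$-definable enumeration of the grounds. Recall that the ground model definability theorem supplies a single $\Pi_2$-formula $\varphi(x,r)$ such that, as $r$ ranges over $M$, the classes $W_r=\{x : M\models\varphi(x,r)\}$ enumerate exactly the grounds of $M$, with $W_r$ a ground whenever it is nonempty. As in the lemma, we may compute the index set of those $r_n$ for which $W_{r_n}$ is nonempty, and for each such $n$ the domain of the ground, namely $\{a : M\models\varphi(a,r_n)\}$, is decidable from the given oracle; its membership relation is just $\in^M$ restricted to that domain. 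This fixes a presentation of each ground, the same one used in Lemma~\ref{Lemma.Computing-grounds}.

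The key step is the relativization lemma for definable classes: for any formula $\psi(x_0,\ldots,x_k)$ and any parameters $\avec$ drawn from $W_{r_n}$,
$$W_{r_n}\models\psi(\avec)\quad\Iff\quad M\models\psi^{W_{r_n}}(\avec),$$
where $\psi^{W_{r_n}}$ is the relativization of $\psi$ to the class defined by $\varphi(\,\cdot\,,r_n)$, obtained by replacing each $\exists x$ by $\exists x\,(\varphi(x,r_n)\wedge\cdots)$ and each $\forall x$ by $\forall x\,(\varphi(x,r_n)\to\cdots)$. The map $\psi\mapsto\psi^{W_{r_n}}$ is a purely syntactic transformation, hence computable uniformly in $n$, with $r_n$ inserted as a parameter (a domain element of $M$). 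Because the oracle is the full elementary diagram of $M$, we can decide $M\models\psi^{W_{r_n}}(\avec)$ for every formula $\psi$ and every tuple $\avec$ from $W_{r_n}$, and thereby decide $W_{r_n}\models\psi(\avec)$. Running this procedure uniformly over $n$ outputs the desired list of full elementary diagrams of the grounds.

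I expect the main obstacle to be bookkeeping rather than anything conceptual: one must check that the transformation $\psi\mapsto\psi^{W_{r_n}}$, together with the substitution of $r_n$, is genuinely computable, and that querying $M\models\psi^{W_{r_n}}(\avec)$ is legitimate even though relativizing to a $\Pi_2$-definable class raises the quantifier complexity of $\psi$. This complexity increase is exactly why the argument is phrased in terms of the full elementary diagram: we are handed truth values for formulas of all complexities, so the shift caused by relativization is harmless. A level-by-level refinement would instead require tracking that shift, which is why—unlike Theorem~\ref{Theorem.Computing-diagram-of-extension}—the corollary is stated only for the full diagram.
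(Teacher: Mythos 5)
Your proposal is correct and takes essentially the same route as the paper: the paper's proof is exactly the observation that the relativization map $\psi\mapsto\psi^{W}$ is computable for a definable class $W$ (here the uniformly $\Pi_2$-definable grounds from Lemma~\ref{Lemma.Computing-grounds}), so the full elementary diagram of $M$ decides the relativized formulas and hence the elementary diagrams of the grounds. Your version merely spells out the bookkeeping that the paper's two-sentence proof leaves implicit.
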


\begin{proof}
This follows using the fact that the translation map on formulae $\varphi \mapsto \varphi^W$ is computable, if $W$ is a definable class, since the translation is merely replacing unbounded quantifiers with quantifiers bounded by $W$. So from the full elementary diagram of $M$ we can compute a listing of the elementary diagrams of the grounds $W$ of $M$.
\end{proof}

We turn now from the grounds to the full generic multiverse. It is not possible to compute a listing of all the models in the generic multiverse for the simple reason that the generic multiverse is uncountable. Even if we restrict to extensions from just a single simple forcing, for instance the forcing to add a Cohen real, there will still be uncountably many extensions. Nevertheless, the \emph{computable generic multiverse} of $M$, that portion of the multiverse computable from the diagram of $M$, is close to the full multiverse in a sense we now describe. 

Usuba's result that the grounds are strongly downward directed \cite{usuba2017} implies
that every model in the generic multiverse of $M$ is at most two steps away from $M$, namely it is a forcing extension of a ground of $M$. While we cannot in general hope that every model is computable from the elementary diagram of $M$, we can always compute a presentation of a model which is a forcing extension of the same ground by the same poset. Such models will necessarily satisfy the same set-theoretical formulae with parameters from $M$.

\begin{corollary}\label{Corollary.Computing-the-generic-multiverse}
Let $\langle\bar N,\in^{\bar N}\rangle$ be a model in the generic multiverse of $M$, where $\bar N = W[\bar G]$ for a distinguished ground $W$ of $M$ where $\bar G$ is generic over $W$ for a distinguished poset $\P$. Given an oracle for the full elementary diagram of $M$ there is a computable procedure to compute a $W$-generic filter $G \subseteq \P$ and decide the full elementary diagram of $N = W[G]$.
\end{corollary}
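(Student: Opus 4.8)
The plan is to obtain the result by composing the two effective reductions already established: first pass from the full elementary diagram of $M$ to that of the distinguished ground $W$ using Corollary~\ref{Corollary.Computing-grounds}, and then apply Theorem~\ref{Theorem.Computing-diagram-of-extension} to $W$ and the distinguished poset $\P$ to produce a $W$-generic filter $G$ and decide the full elementary diagram of $N = W[G]$. The whole procedure is then a composition of two computable procedures, hence computable.

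First I would make precise what it means for the ground $W$ to be distinguished. By the ground-model enumeration theorem used in Lemma~\ref{Lemma.Computing-grounds}, $W$ is given by a parameter $r$ with $W = \{x : M \models \varphi(x,r)\}$ for the $\Pi_2$ ground-definability formula $\varphi$, and the distinguished poset is given by its index as a domain element of $M$; since $\P \in W$, this same index names $\P$ inside $W$ as well. From this parameter, the proof of Corollary~\ref{Corollary.Computing-grounds} produces the full elementary diagram of $W$ from that of $M$, by translating each formula $\psi$ about $W$ to its relativization $\psi^W$ (replacing unbounded quantifiers by quantifiers bounded to $\{x : \varphi(x,r)\}$) and querying the elementary diagram of $M$.

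The one point requiring care is to package this as a genuine presentation of $W$ with domain $\omega$, so that Theorem~\ref{Theorem.Computing-diagram-of-extension} may be applied to it as a black box. The domain of $W$ is the set $\{x \in \omega : M \models \varphi(x,r)\}$, which is decidable from the (full, hence in particular $\Pi_2$) elementary diagram of $M$; enumerating it and reindexing gives a bijection onto $\omega$, and composing the translation above with this reindexing yields the full elementary diagram of the resulting domain-$\omega$ presentation of $W$, with $\P$ located as a specific domain element. Since $W$ is a ground of the \ZFC\ model $M$, it too satisfies \ZFC, in particular \ZF, so the hypotheses of Theorem~\ref{Theorem.Computing-diagram-of-extension} are met.

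With the full elementary diagram of $W$ in hand as an oracle, I would then feed $W$ and $\P$ to Theorem~\ref{Theorem.Computing-diagram-of-extension}, which (via Theorem~\ref{Theorem.Computing-atomic-diagram-of-extension}) computes a $W$-generic filter $G \subseteq \P$ and decides the full elementary diagram of $N = W[G]$, exactly the desired output. The main thing to emphasize is not an obstacle so much as a caveat about what is being computed: we obtain $N = W[G]$ for the canonically built generic $G$, which need not be isomorphic to the given $\bar N = W[\bar G]$ when $\P$ is not homogeneous. But the corollary asks only for \emph{some} forcing extension of the same ground $W$ by the same poset $\P$, together with its elementary diagram, which is precisely what the composition yields.
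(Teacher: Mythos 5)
Your proposal is correct and follows exactly the paper's route: the paper's proof is the one-line observation that the result follows immediately from Corollary~\ref{Corollary.Computing-grounds} and Theorem~\ref{Theorem.Computing-diagram-of-extension}, which is precisely the composition you carry out (your additional remarks on reindexing $W$ onto domain $\omega$ and on the produced $N$ not necessarily being isomorphic to $\bar N$ are accurate elaborations consistent with the paper's surrounding discussion).
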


\begin{proof}
This follows immediately from Corollary~\ref{Corollary.Computing-grounds} and Theorem~\ref{Theorem.Computing-diagram-of-extension}.
\end{proof}

On the other hand, there is a different sense in which the computable generic multiverse of $M$ is far from the full generic multiverse of $M$. Namely, the computable generic multiverse is not dense in the generic multiverse. There are models in the generic multiverse so that no extension of them can be computed from the full diagram for $M$.

\begin{theorem}
Let $\<M,\in^M>$ be a countable model of \ZF. Then there is $M[G]$ a forcing extension of $M$ by the forcing to add a Cohen-generic $G \subseteq \omega^M$ so that no outer model of of $M[G]$ has a $\Delta_0$-diagram computable from the full elementary diagram of $M$. 
\end{theorem}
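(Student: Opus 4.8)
The plan is to run a cardinality (diagonalization) argument, exploiting the mismatch between the continuum-many Cohen reals over $M$ and the merely countably many objects computable from a single fixed oracle. Write $E$ for the full elementary diagram of $M$. Since $E$ is one fixed oracle, there are only countably many reals computable from it, namely the total values $\Phi_e^E$ of the countably many Turing functionals $\Phi_e$. Each such real, if it is the $\Delta_0$-diagram of any structure at all, is the $\Delta_0$-diagram of one fixed countable structure $N_e$ with domain $\omega$; in particular each $N_e$ has only countably many elements and so codes only countably many subsets of $\omega^{N_e}$.

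The key observation is that an outer model remembers its generic. If $N$ is any outer model of $M[G]$, then $M[G]\subseteq N$ and $G\in M[G]$, so $G\in N$; via the inclusion $M[G]\subseteq N$ the external real $G\subseteq\omega^M$ is coded by a single element of $N$ (all models in the generic multiverse of $M$ share the ordinals of $M$, so $\omega^N=\omega^M$ and this element genuinely recovers $G$). Call a Cohen-generic $G$ over $M$ \emph{captured} if some presentation of some outer model of $M[G]$ has $\Delta_0$-diagram computable from $E$. For such a $G$ that presentation is one of the $N_e$, and $G$ is among the countably many reals coded in $N_e$. Hence the set of captured generics is contained in $\bigcup_{e\in\omega}\{\,x : x \text{ is a real coded in } N_e\,\}$, a countable union of countable sets and therefore countable.

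First I would verify that there are continuum-many Cohen-generic reals over $M$. As $M$ is countable, the dense subsets of $\text{Add}(\omega,1)^M$ lying in $M$ form a countable family $\langle D_n\rangle_{n\in\omega}$, and since the forcing is splitting one builds a perfect tree of conditions $\langle p_s : s\in 2^{\ltomega}\rangle$ in which $p_s\in D_{\Card{s}}$ (with $\Card s$ the length of $s$) and $p_{s0}\perp p_{s1}$; distinct branches of $2^\omega$ then generate distinct $M$-generic filters, giving $2^{\aleph_0}$ many generics. Comparing cardinalities, the captured generics are countable while the generics number continuum, so there is a Cohen-generic $G$ over $M$ that is not captured. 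For this $G$, no outer model of $M[G]$ has a $\Delta_0$-diagram computable from $E$, which is exactly the claim (and exhibits the failure of density, since the models ``above'' $M[G]$ in the multiverse are all invisible to $E$).

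The argument is complete once these two counting bounds are in place, so I expect the real point of care to be the bookkeeping in the middle step rather than any deep forcing fact: one must pin each captured $G$ to a single element of a single fixed countable structure $N_e$, which is what forces the captured set to be countable. The two ingredients there are that an outer model contains $G$ as an element and that $\omega^N=\omega^M$, so that this element recovers the external real $G\subseteq\omega^M$ faithfully.
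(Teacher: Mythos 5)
Your strategy (count the continuum many Cohen generics, count the capturable ones, compare) is genuinely different from the paper's, and it has a gap at the step ``$G$ is among the countably many reals coded in $N_e$.'' The structure $N_e$ is an abstract presentation with domain $\omega$; an element $a$ of $N_e$ canonically determines a subset of $\omega^{N_e}$, i.e.\ a subset of the domain of the presentation, but it does \emph{not} canonically determine a subset of $\omega^M$. To compare with $G\subseteq\omega^M$ you must transport along an isomorphism $\pi\colon N_e\to N$, and when $M$ is $\omega$-nonstandard such a $\pi$ is far from unique: two isomorphisms $\pi,\pi'$ of $N_e$ onto outer models of (possibly different) Cohen extensions differ by an isomorphism of outer models whose restriction to $\omega^M$ can be a nontrivial automorphism of the nonstandard natural numbers, and there can be continuum many of these. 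So a single pair $(e,a)$ can capture the whole set $\{\pi[\{m: N_e\models m\in a\}]:\pi\}$, which need not be countable. Worse, capturedness is invariant under isomorphism of the extensions (outer models and their presentations transfer across any isomorphism $M[G]\cong M[G']$, e.g.\ the one induced by $\sigma\in\mathrm{Aut}(M)$ sending $G$ to $\sigma[G]$), and at least one generic is always captured, since by Theorem~\ref{Theorem.Computing-diagram-of-extension} some $M[G]$ itself has an $E$-computable presentation; for $\omega$-nonstandard $M$ with a rich automorphism group the orbit of that generic can be uncountable, so the containment ``captured $\subseteq$ countable'' is not merely unjustified but in danger of being outright false. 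Your argument is fine when $M$ is an $\omega$-model, since then $(\omega^M,\in^M)$ is rigid and the transport is canonical, but the theorem is asserted for arbitrary countable models of \ZF, and the paper is explicitly careful about the $\omega$-nonstandard case elsewhere.

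The paper avoids counting entirely and instead diagonalizes pointwise: fix a real $z$ not computable from the elementary diagram $E$ of $M$, and build $G$ by alternately extending minimally to meet the $n$th dense set (a length that is computable from $E$) and then appending the bit $z(n)$ at the resulting coding point. Any outer model $N$ of $M[G]$ contains $G$ as an element, and from $\Delta_0(N)$ together with $E$ one can locate the coding points and read off $z$ bit by bit; hence $\Delta_0(N)\not\leq_T E$ for \emph{every} outer model $N$ of this particular $M[G]$. That argument works with whatever presentation of the outer model is offered and never needs to know how many generics are capturable. If you want to salvage your approach, you would need either to restrict to $\omega$-standard $M$ or to replace ``reals coded in $N_e$'' by a genuinely countable, presentation-independent bound on the external subsets of $\omega^M$ that the countably many $E$-computable structures can realize as generics, which is exactly the identification problem your write-up flags as ``the real point of care'' but does not resolve.
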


\begin{proof}
Let us first describe the generic $G$. Fix any real $z$, thought of as an $\omega$-length binary sequence, which is not computable from the full elementary diagram of $M$. From the diagram of $M$ we can compute a list of the dense subsets of $\Add(\omega,1)^M$. We use this list to build a generic, in the following manner. Start with $p_0 = \<z(0)>$. Having built $p_n$, first extend to meet the $n$th dense set, minimizing the length of the extension (if there is more than one minimal length extension to the $n$th dense set then pick arbitrarily). Then put $z(n+1)$ on the end to get $p_{n+1}$. Because we met every dense set, $G = \bigcup_n p_n$ is generic.\footnote{Note that this process works even if $M$ is $\omega$-nonstandard. In this case, we still have a list, whose order-type is the real $\omega$, of the dense subsets of $\Add(\omega,1)^M$. And since for each $a \in \omega^M$ it is dense in $\Add(\omega,1)^M$ to have a condition with length $\ge a$, the $G$ we produce is unbounded in $\omega^M$.}

Assume now that $\<N,\in^N>$ is an outer model of $M[G]$, where we think of the universe of $N$ as being $\N$ with $\in^N$ being some binary relation on $\N$. Let us see how to compute $z$ from the $\Delta_0$-diagram of $N$ and the full diagram for $M$. Without loss of generality we may assume that the ordinals $\le \omega$ in $M$ and $N$ are represented by the same natural numbers, as we may compute an isomorphic copy of $N$ with this property from what we are given. Fix the index of $G$ in $N$. From the $\Delta_0$-diagram of $N$ we can compute $z(0)$, simply by asking what the first bit of $G$ is. From the full diagram of $M$ we know the shortest distance we have to extend past $p_0 = \<z(0)>$ to meet the $0$th dense set. So we can compute the next coding point and thereby recover $z(1)$ and $p_1$. Continuing this process upward, we can compute $z(n)$ for each $n$. Therefore, if we could compute the $\Delta_0$-diagram of $N$ from the full diagram of $M$ then we could compute $z$ from the full diagram of $M$, which would be a contradiction.
\end{proof}

Next we wish to discuss the extent to which the computable generic multiverse has the same structural properties as the full generic multiverse. Let us start with the following property, due essentially to Mostowski \cite{Mostowski1976}. Given a collection $\mathcal E$ of models in the generic multiverse, say that $\mathcal E$ is \emph{amalgamable} when there is a model in the generic multiverse which contains every model in $\mathcal E$. Note that if each model in $\mathcal E$ is a forcing extension of $M$ it is equivalent to ask whether there is a forcing extension of $M$ which contains every model in $\mathcal E$. 

\begin{theorem}[Mostowski]
Let $I$ be a finite set and let $\mathcal A$ be a family of subsets of $I$ which contains all singletons and is closed under subsets. Let $\<M,\in^M>\satisfies\ZF$ be a countable  model of set theory. Then there are reals $c_i \subseteq \omega^M$ for $i \in I$ so that each $c_i$ is Cohen-generic over $M$ and for $A \subseteq I$ the family $\{M[c_i] : i \in A\}$ is amalgamable if and only if $A \in \mathcal A$. 
\end{theorem}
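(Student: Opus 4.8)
The plan is to realize the prescribed amalgamation pattern by coding, using a \emph{secret-sharing} layout over a family of mutually $M$-generic Cohen reals, together with a fixed real $z$ that is provably absent from every forcing extension of $M$. For the latter, since $M$ is countable the ordinals $\Ord^M$ have external cofinality $\omega$, so I fix a real $z$ coding a strictly increasing $\omega$-sequence cofinal in $\Ord^M$. If $z$ were to belong to some set-forcing extension $M[H]$, then $M[H]$ would contain a cofinal map $\omega\to\Ord^{M[H]}=\Ord^M$, contradicting the Replacement schema in $M[H]$; hence $z$ lies in no forcing extension of $M$. The key structural fact I will use (stated in the excerpt) is that a family of forcing extensions of $M$ is amalgamable exactly when they sit inside a common forcing extension $M[H]$ of $M$.

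Next I set up the combinatorics. Let $\mathcal B=\mathcal P(I)\minus\mathcal A$; since $\mathcal A$ is downward closed, $\mathcal B$ is upward closed, and because $\mathcal A$ contains all singletons (and hence $\emptyset$), each minimal element of $\mathcal B$ has size at least $2$. Enumerate these minimal forbidden sets $B_1,\ldots,B_m$. For each $B_j$ I reserve a disjoint block of coordinates and, on that block, install an XOR secret-sharing of $z$ among the conditions $\{c_i : i\in B_j\}$: choosing $|B_j|-1$ of the block-reals freely and letting the last be their XOR with $z$, so that $\bigoplus_{i\in B_j}x^j_i=z$ while any proper subset of the block-reals is mutually generic and reveals nothing about $z$. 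Each $c_i$ is then assembled, via a fixed computable interleaving of $\omega$, from its blocks (those $j$ with $i\in B_j$) together with one private generic block; since it is a product of finitely many mutually generic Cohen reals it is itself a single Cohen-generic real over $M$.

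With the layout fixed, both directions follow. If $A\in\mathcal A$ then $A\supseteq B_j$ for no $j$, so on every block $A$ meets only a proper subset of that block's reals; hence the base reals used by $\{c_i:i\in A\}$ form a mutually generic family, $M[\langle c_i\rangle_{i\in A}]$ is a genuine forcing extension of $M$ containing each $M[c_i]$, and the family is amalgamable. If instead $A\notin\mathcal A$, then $A\supseteq B_j$ for some $j$; in any common forcing extension $N=M[H]$ with $M[c_i]\of N$ for all $i\in A$, each $c_i$ with $i\in B_j$ lies in $N$, so the block-$j$ reals $\{x^j_i:i\in B_j\}$ lie in $N$ (block extraction is computable and $M\of N$), whence $z=\bigoplus_{i\in B_j}x^j_i\in N$, contradicting that $z$ is in no forcing extension of $M$. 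Thus $A$ is not amalgamable, and the equivalence is established.

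The main obstacle is the simultaneous genericity bookkeeping in the middle step: I must guarantee that \emph{every} $c_i$ is individually Cohen-generic and that the base reals exploited by \emph{every} permitted coalition form a mutually generic family, while the determined block-reals still code $z$. Because $M$ is countable, genericity for any fixed poset is a comeager condition on Baire space, and the construction imposes only finitely many such conditions (product-genericity of the free block-reals and the private reals, plus the finitely many side conditions making each determined block-real, and each proper sub-collection containing it, generic over the relevant parameters). A finite intersection of comeager sets is comeager, hence nonempty, so a choice of base reals meeting all requirements exists; verifying that these conditions indeed suffice — in particular that each permitted coalition generates a forcing extension while each minimal forbidden coalition decodes $z$ — is the technical heart of the argument.
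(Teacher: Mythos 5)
Your overall architecture differs from the paper's: where the paper (in its proof of the effective version of this theorem) interleaves dense-set-meeting with designated \emph{coding points} --- rows in an $\omega\times I$ matrix where all columns are simultaneously $1$, so that a forbidden coalition can locate the coding points and read off a forbidden real $z$ --- you instead use an XOR secret-sharing scheme, reserving one block per minimal forbidden set $B_j$ and arranging $\bigoplus_{i\in B_j}x^j_i=z$. That scheme is sound: the comeager bookkeeping you describe does handle the simultaneous genericity requirements (each permitted coalition touches only a proper subfamily of each block, so each determined real retains a ``fresh'' free summand not otherwise visible to the coalition, making the relevant evaluation map open and continuous), and the negative direction correctly recovers $z$ from any outer model containing a full block. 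Both approaches reduce the theorem to the existence of a single real $z$ that belongs to no forcing extension of $M$.

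That reduction is where your proof has a genuine gap. Your candidate $z$ --- a real coding an externally chosen $\omega$-sequence cofinal in $\Ord^M$ --- is not in general catastrophic, and the claimed contradiction with Replacement does not go through. The terms of your sequence are elements of $M$, so the only way to package the sequence as a real is via their indices in the presentation (whose domain is $\omega$). But an outer model $M[H]$ sees that real only as a set of pairs of \emph{its own} natural numbers; it has no access to the external indexing map from $\omega$ onto $M[H]$, so it cannot convert the index $a_n$ into the ordinal that $a_n$ happens to represent, and no cofinal map $\omega\to\Ord^{M[H]}$ is definable from $z$ inside $M[H]$. Indeed, nothing prevents the pure set $\set{(n,a_n):n\in\omega}$ from already being an element of $M$ (e.g.\ a recursive real, if the presentation happens to index cofinally many ordinals in increasing order along a recursive set), in which case it is certainly in every forcing extension. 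A version of your idea does work for \emph{transitive} $M$ --- code each $\alpha_n$ by a well-order of $\omega$ of that type; then an outer model of $\ZF$ containing the code computes the order types and violates Replacement --- but the theorem is asserted for arbitrary countable models of $\ZF$, which may be ill-founded, and there the coded relations are not recognized as ordinals. The repair is to use a genuinely catastrophic real as in the paper: take $z$ to code the relation $\in^M$ itself as a subset of $\omega$, so that (by the Mostowski collapse argument, or its generalization to ill-founded models) no outer model of $M$ can contain $z$. With that substitution, and with the minor caveat that for $\omega$-nonstandard $M$ the block decomposition should be carried out internally over $\omega^M$, your argument goes through.
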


The same phenomenon happens within the computable generic multiverse.

\begin{theorem}
Let $I$ be a finite set and let $\mathcal A$ be a family of subsets of $I$ which contains all singletons and is closed under subsets. Let $\<M,\in^M>\satisfies\ZF$ be a countable model of set theory. Then from an oracle for the elementary diagram of $M$ there is a procedure to compute Cohen reals $c_i \subseteq \omega^M$ generic over $M$ and the elementary diagrams of $M[c_i]$ for $i \in I$ so that for $A \subseteq I$ the family $\{M[c_i] : i \in A\}$ is amalgamable in the generic multiverse if and only if $A \in \mathcal A$. 
\end{theorem}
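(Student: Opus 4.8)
The plan is to carry out the construction of the preceding theorem of Mostowski effectively, relative to an oracle for the elementary diagram of $M$. Recall that that construction produces the reals $c_i$ by forcing over $M$: there is a single notion of forcing $\P\in M$ --- a finite product of copies of $\Add(\omega,1)^M$ together with the coding apparatus that entangles the coordinates indexed by each minimal set $B\of I$ with $B\notin\mathcal A$ --- such that each $c_i$ is definable from a $\P$-generic filter $G$ by a fixed formula, and such that for $A\of I$ the family $\{M[c_i]:i\in A\}$ is amalgamable exactly when $A\in\mathcal A$. We take this construction, and Mostowski's verification of the amalgamation pattern, entirely as given; the only thing to add is that every step can be performed by a computable procedure with an oracle for the elementary diagram of $M$.

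First I would build the generic $G\of\P$ and compute the elementary diagram of $M[G]$. This proceeds as in Theorem~\ref{Theorem.Computing-generic-filter}: from the elementary diagram we enumerate the dense subsets of $\P$ in $M$ and construct a descending sequence meeting each of them, obtaining a filter $G$ whose membership relation is decidable from the oracle. The finitely many combinatorial choices demanded by Mostowski's coding (which coordinate to adjust, how to link the coordinates of each minimal bad $B$) are themselves $\Delta_0$ questions about $M$, hence decidable from the diagram, so $G$ may be arranged to realize precisely his configuration. Since the proof of Theorem~\ref{Theorem.Computing-diagram-of-extension} applies to any generic whose membership we can decide --- the forcing theorem $M[G]\satisfies\varphi\iff\exists p\in G\ (p\forces_\P\varphi)$ holds for every generic --- we obtain the full elementary diagram of $M[G]$.

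Next I would extract the reals and the diagrams of the intermediate models. Each $c_i$ is defined from $G$ by a fixed formula, so querying the computed diagram of $M[G]$ recovers $c_i$ as a decidable subset of $\omega^M$ and as an element of $M[G]$. Each $M[c_i]$ is the Cohen extension of $M$ by $c_i$, an intermediate model $M\of M[c_i]\of M[G]$, whose generic filter $G_i=\{p\in\Add(\omega,1)^M: p\sqof c_i\}$ is the decidable set of initial segments of $c_i$, a $\Delta_0$ condition. Applying the argument of Theorem~\ref{Theorem.Computing-diagram-of-extension} to $M$, to $\Add(\omega,1)^M$, and to this decidable generic $G_i$ then yields the elementary diagram of $M[c_i]$ from that of $M$. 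Finally, because the reals $c_i$ are literally those produced by Mostowski's construction, his analysis shows without further work that $\{M[c_i]:i\in A\}$ is amalgamable in the generic multiverse if and only if $A\in\mathcal A$.

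The main obstacle is the interplay between controlling the generic and retaining decidability: one must check both that Mostowski's coding choices can be injected into the descending-sequence construction of $G$ while keeping $G$ (and each derived $G_i$) decidable from the diagram, and that the proof of Theorem~\ref{Theorem.Computing-diagram-of-extension} is genuinely generic-agnostic, so that it may be applied to the hand-built generics $G_i$ rather than only to the canonical generic it constructs. Once these two points are confirmed, the effectivity of every remaining step is routine, and the amalgamation pattern itself comes for free from the classical theorem.
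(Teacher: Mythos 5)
There is a genuine gap here, and it is fatal to the strategy rather than a fixable detail. Your plan rests on the premise that Mostowski's construction produces the tuple $(c_i)_{i\in I}$ as (definable from) a single $M$-generic filter $G$ for a single forcing notion $\P\in M$ that ``entangles'' the coordinates. No such $\P$ can exist: if the whole configuration were generic for one poset in $M$, then $M[G]$ would itself be a forcing extension of $M$ containing every $c_i$, hence a member of the generic multiverse amalgamating the entire family $\{M[c_i]:i\in I\}$ --- so \emph{every} $A\subseteq I$ would be amalgamable, contradicting the requirement that $A\notin\mathcal A$ yield a non-amalgamable subfamily. The non-amalgamability is precisely the feature that cannot be produced by forcing with a single poset over $M$, so the first step of your argument (build $G\subseteq\P$ as in the earlier theorems, then read off the $c_i$) cannot be carried out for any choice of $\P\in M$.

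The paper's proof instead builds the reals $c_i$ \emph{externally}, as columns of an $\omega^M\times I$ matrix, interleaving two kinds of steps: dense-set-meeting steps for the finite products $\P_A=\prod_{i\in A}\Add(\omega,1)$ with $A\in\mathcal A$ only (this makes each subfamily indexed by $A\in\mathcal A$ mutually generic, hence amalgamable, and in particular makes each single $c_i$ Cohen-generic since singletons lie in $\mathcal A$), and coding steps that append to every column a row of $1$s followed by a row carrying the $n$th bit of a \emph{catastrophic real} $z$. Here $z$ is taken to be a real coding the relation $\in^M$ itself together with an isomorphism onto $M$; it is computable from the elementary diagram of $M$, yet by the Mostowski collapse lemma no outer model of $M$ can contain it. For $A\notin\mathcal A$ the construction arranges that the coding rows are the only rows where all $c_i$, $i\in A$, simultaneously equal $1$, so any amalgamating model would locate those rows, decode $z$, and thus contain $z$ --- impossible. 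Your proposal contains no analogue of the catastrophic real or of the deliberate failure of joint genericity, and these are exactly the ingredients that make the ``only if'' direction work. The part of your argument about computing the diagrams of the individual $M[c_i]$ from the decidable filters $G_i$ of initial segments of $c_i$ is fine and matches what the paper does, but it does not rescue the overall approach.
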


\begin{proof}
In the language of \cite{HHKVW2019}, let $z$ be a \emph{catastrophic real} for $M$; that is, $z$ is a real so that no outer model of $M$ can contain $z$. We claim that there is such $z$ computable from the elementary diagram of $M$. Namely, we can take $z$ to be an isomorphic copy of $\in^M$ on $\omega$, along with with an isomorphism onto $M$. Then, by the Mostowski collapse lemma, any model of \ZF\ which contains $z$ would have to contain $\in^M$ itself as a set, which is impossible for an outer model of $M$.

Without loss of generality we may assume that $I,\mathcal A \in M$. For each $A \subseteq I$ let $\P_A$ be the forcing $\prod_{i \in A} \Add(\omega,1) \in M$. 
From the elementary diagram of $M$ we can compute a listing, in order-type $\omega$, of all pairs $\<A,D>$ with $A \in \mathcal A$ and $D \in M$ a dense subset of $\P_A$. We build the Cohen reals $c_i$ by means of a descending sequence of conditions, which we think of as filling in an $\omega^M \times I$ matrix with $0$s and $1$s, with the $i$th column growing into $c_i$. We will ensure that at each step of the construction we have built all columns up to the same height. 

We start with a completely empty matrix, i.e.\ with $c^0_i = \emptyset$ for all $i \in I$. Now suppose we have built up $c^n_i$. We are presented with $A_n \in \mathcal A$ and $D_n \subseteq \P_A$ dense. Extend the columns with index in $A_n$ to collectively meet $D_n$, then pad with $1$s to ensure the columns all have the same height. Next, pad the remaining columns with $0$s to build them up to the same height, then extend each column by appending a row of $1$s followed by a row of $z(n)$'s. These rows of $1$s are the coding points which will be used to recover $z(n)$ if we are dealing with $A \not \in \mathcal A$. Note that this process is computable given the $\Delta_0$-diagram of $M$, since from that we can compute the minimal length we need to extend to meet $D_n$ and then pick one of the finitely many extensions of that length. So if we set $c_i$ to be the generic determined by $\<c^n_i : n \in \omega>$ then $c_i$ is computable from the diagram of $M$. And so, once we know the $c_i$'s are generic we know that we can, as before, compute the full diagrams of the $M[c_i]$'s. 

It remains only to see that the $M[c_i]$'s have the desired amalgamability property. First, suppose that $A \in \mathcal A$. Then we built up $\{ c_i : i \in A \}$ so that they met every dense subset of $\P_A$ in $M$. So the $c_i$ for $i \in A$ are mutually generic and so the family $\{ M[c_i] : i \in A \}$ is amalgamable, witnessed by $M[c_i : i \in A]$. In particular, this shows that each $c_i$ is generic over $M$. Now suppose that $A \in \mathcal P(I) \setminus \mathcal A$. Then, by the construction, the only rows in which each $c_i$ for $i \in A$ has value $1$ are the coding points identifying where the bits of $z$ are coded. So no outer model of $M$ which satisfies $\ZFC$ can contain each $c_i$ for $i \in A$.
\end{proof}

In \cite{HHKVW2019}, the first and third author along with Habi\v{c}, Klausner, and Verner extended Mostowski's theorem. We will not reproduce that article here, but we wish to note that the constructions therein are all effective. Given an oracle for the elementary diagram of $M$ there are computable procedures to compute the desired generics for the results from that article. So the properties of the generic multiverse explored in that article are also enjoyed by the computable generic multiverse.
\smallskip 

To close out this section, we remark that the existence of many (non-isomorphic) grounds for the same countable
model of \ZFC\ implies that in general it is impossible to recover $M$ effectively---or even non-effectively---from an arbitrary copy of $M[G]$.  $M$ has a canonical embedding into $M[G]$
by the map sending each $x\in M$ to the class $[\xcheck]_G$, where the $\P$-name
$\xcheck$ is defined by recursion as $\set{\<\ycheck,1>~ : y\in x}$.  With a
$\Delta_0(M)$-oracle, one can compute $\xcheck$ from $x$, by the methods
seen earlier for recursive definitions,
and thereby compute the canonical embedding of $M$ into $M[G]$.
Its image will thus be $\Delta_0(M)$-computably enumerable in $M[G]$,
but it is not defined uniformly across copies of $M[G]$.

The question of how to recover a copy of $M$
from a copy of $M[G]$ is closely tied to the question of whether
$\Delta_0(M[G])$ can compute $\Delta_0(M)$, and if so, whether there
is a uniform procedure for doing so from all copies of $M[G]$.  The answer is not obvious.
Indeed, one can imagine the possibility that the isomorphism type of the structure
$M[G]$ may be simpler in some sense than that of $M$, and that therefore
there may exist a copy of $M[G]$ that cannot compute any copy of $M$.
For example, perhaps $M[G]$ satisfies $\GCH$, whereas the map
$\kappa\mapsto 2^{\kappa}$ on cardinals in $M$ may have been far more
chaotic and may have encoded some information not intrinsically recoverable
from $M[G]$.

There is an analogy here to computable fields.  Rabin's Theorem states that for
every computable field $F$, the algebraic closure $\Fbar$ also has a computable
presentation, and that both a copy of $\Fbar$ and an embedding of $F$ into that copy
may be computed uniformly from the atomic diagram of $F$, in the signature
with $+$ and~$\cdot$.  This much is analogous to our results above for a given
$(M,\P)$.  However, the uniformity carries over to countable fields $F$ that
are not computably presentable, and in this case the algebraic closure may be
far simpler than any presentation of $F$, as every countable algebraically closed field
has a computable presentation.  Taking the algebraic closure smoothes out a field
and eliminates complexity, and we ask whether the same might happen with a forcing extension of a model of set theory. 

\begin{question}
Let $M$ be a countable model of \ZFC, and $\P\in M$ a forcing notion, for which the
procedure in Theorem  \ref{Theorem.Computing-atomic-diagram-of-extension} computes
a filter $G$ and the atomic diagram of a presentation of $M[G]$.  Can there exist a
presentation $\mathcal A\cong M[G]$ such that for every presentation ${M^*} \cong M$, we have $\Delta_0({M^*}) \not \leq_T \Delta_0(\mathcal A)$?  And if so, can this presentation $\mathcal A$ be the one computed by our procedure?
\end{question}

\section{Class forcing}\label{Section.Class-forcing}

Elsewhere, we have restricted our attention to forcing notions which are set-sized. We detour in this section to consider proper-class-sized forcing notions. There are two major approaches to formulate class forcing, and we consider both of them.
The first approach, let us call it the first-order approach, is to work over \ZF\  (possibly assuming more) and deal with a definable class. A generic then has to meet every definable dense subclass of the forcing notion.

\begin{theorem}
Given an oracle for the full elementary diagram of a countable model $\<M,\in^M> \models \ZF$ and given a definable pretame class forcing $\P \subseteq M$, there is a computable procedure to compute an $M$-generic filter $G \subseteq \P$ and decide the full elementary diagram of the forcing extension $M[G]$.
\end{theorem}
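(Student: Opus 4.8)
The plan is to adapt the set-forcing argument of Theorems~\ref{Theorem.Computing-generic-filter} and~\ref{Theorem.Computing-diagram-of-extension} to the class-forcing setting, where the main new difficulty is that genericity is now measured against \emph{definable} dense subclasses rather than against a single set $\mathcal{D}\in M$ enumerating all dense subsets. First I would observe that since $\P\subseteq M$ is a definable class, from the full elementary diagram of $M$ we can enumerate all the definable dense subclasses of $\P$: there are only countably many formulas $\psi(x,\vec{a})$ with parameters $\vec a\in M$, and for each we can use the elementary diagram to decide whether $M\models$``$\{x:\psi(x,\vec a)\}$ is a dense subclass of $\P$''. This gives a computable listing $D_0,D_1,D_2,\ldots$ of the definable dense subclasses, playing exactly the role that the enumeration of $\mathcal{D}$ played before. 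We then build a descending sequence $p_0\geq_\P p_1\geq_\P\cdots$ with $p_n\in D_n$, where at each stage the existence of a suitable extension is guaranteed by density, and each such extension can be located by a search since membership in a definable $D_n$ and the relation $\leq_\P$ are decidable from the elementary diagram. Setting $G=\{q:p_n\leq_\P q\text{ for some }n\}$ yields an $M$-generic filter meeting every definable dense subclass, and as in Theorem~\ref{Theorem.Computing-generic-filter} the filter is in fact decidable and not merely enumerable.

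The second half is to compute the elementary diagram of $M[G]$. Here I would invoke the pretameness hypothesis on $\P$, which is precisely the condition ensuring that the class-forcing relation $p\forces_\P\varphi$ behaves as in the set-forcing case: pretameness guarantees that forcing preserves \ZF, that the truth lemma holds (so $M[G]\models\varphi(\vec\sigma_G)$ iff some $p\in G$ forces $\varphi$), and crucially that the forcing relation for each fixed formula is definable over $M$ with the same level-by-level complexity control used in Theorem~\ref{Theorem.Computing-diagram-of-extension}. Granting this, the computation proceeds exactly as before: to decide whether $M[G]\models\varphi(\vec\sigma_G)$ we search for a condition $p\in G$ with $M\models(p\forces_\P\varphi(\vec\sigma))$, and since the map $\varphi\mapsto(p\forces_\P\varphi)$ is effective and the full elementary diagram of $M$ is available as an oracle, this decision can be carried out uniformly.

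The main obstacle, and the point requiring the most care, is the definability and complexity of the class-forcing relation itself. For set forcing the relation is $\Delta_1$ by a routine recursion, but for class forcing the forcing theorem can outright fail for non-pretame notions (this is why the hypothesis is needed), and even when it holds the definition of $p\forces_\P\varphi$ is more delicate: pretameness is exactly what lets one reduce quantifiers ranging over the proper class $\P$ to set-sized searches, via the pretameness reduction of a dense subclass to a set below any condition. I would therefore structure the argument so that each appeal to the forcing relation explicitly uses the pretameness reduction to keep the relevant searches set-bounded, citing the established theory of pretame class forcing for the definability of $\forces_\P$ and its preservation of \ZF. Once the forcing relation is available with controlled complexity, the remainder is a direct transcription of the set-forcing proof, with the enumeration of definable dense subclasses substituted for the enumeration of $\mathcal{D}$.
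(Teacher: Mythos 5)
Your proposal is correct and follows essentially the same route as the paper: enumerate the definable dense subclasses from the full elementary diagram (there are countably many formula--parameter pairs, and the diagram decides which define dense subclasses), build $G$ by the descending-sequence method of Theorem~\ref{Theorem.Computing-generic-filter}, and then use pretameness --- which the paper invokes precisely because the pretame forcings are exactly those with a definable atomic forcing relation --- to run the argument of Theorem~\ref{Theorem.Computing-diagram-of-extension} and decide the elementary diagram of $M[G]$. The paper's proof is just a terser version of the same argument.
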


See \cite[Section 2.2]{friedman:book} for a definition of pretameness, which is equivalent to the preservation of $\ZF^-$ (in the language with a predicate for the generic filter). The reason to ask $\P$ to be pretame is that the pretame forcings are precisely those which have a definable atomic forcing relation \cite{HKS2018}.

\begin{proof}
From the full diagram of $M$ we can compute a list of the definable dense subclasses of $\P$. So we can compute $G$ as in Theorem~\ref{Theorem.Computing-generic-filter}. Now given the atomic forcing relation for $\P$ there is a computable procedure to associate a formula $\varphi$ with the formula defining the corresponding forcing relation $p \forces \varphi$. So we can compute $M[G]$ and its full elementary diagram as in Theorem~\ref{Theorem.Computing-diagram-of-extension}.
\end{proof}

This answers the question for the first-order approach. We turn now to the other approach, call it the second-order approach. For this approach classes are actual objects in our models, where we work (in first-order logic) with two-sorted structures. We work over a second-order set theory, such as G\"odel--Bernays set theory \GB{} or Kelley--Morse set theory \KM.\footnote{\GB{} is the weaker of the two, stating the existence of classes defined predicatively---quantification is allowed only over sets. On the other hand, \KM{} allows for impredicative comprehension, defining classes by quantifying over the classes. See \cite[Section~2]{williams-min-km} for precise axiomatizations of these two theories, as well as a discussion for their place in the hierarchies of second-order set theories.}
We will use italic letters such $M$ to refer to the sets and calligraphic letters such as $\mathcal M$ to refer to the classes of a model of second-order set theory. Abusing notation slightly, we will also use $\mathcal M$ to refer to the whole model; this is unambiguous, as the sets are definable from the classes. A class forcing notion $\P$ is then a class in the model and a generic meets every dense subclass of $\P$ in the model. We will write $\mathcal M[G]$ for the extension by a generic $G \subseteq \P$.\footnote{Our approach is in first-order logic with two-sorted structures, but as is well-known one can equivalently work in second-order logic with Henkin semantics. In this semantics, one explicitly lists out the classes considered for the second-order part of semantics. Note, however, that to make this approach amenable to the context of computable structure theory we would have to attach to each model a list of the classes to be used for its second-order semantics. That is, we would need natural numbers to identify each class and include a relation for the set-class membership relation. This amounts to the same as the approach in first-order logic.}

Let $\Delta^1_0$ refer to the class of formulae with only set quantifiers. Up to equivalence, this is the same as the class of formulae where all quantifiers are bounded, possibly by classes, because $\exists x\ \varphi(x)$ is equivalent to $\exists x \in V\ \varphi(x)$. So this is the second-order analogue of $\Delta_0$ in the first-order \Levy{} hierarchy.

\begin{theorem}\label{Theorem.Computing-first-order-diagram-for-class-forcing}
Let $\<\mathcal{M},\in^{\mathcal{M}}>$ be a countable model of \GB{} and suppose $\P \in \mathcal M$ is a class forcing notion with its atomic forcing relation a class in $\mathcal M$. Then, from an oracle for the $\Delta^1_0$-elementary diagram of $\mathcal{M}$ there is a computable procedure to compute an $\mathcal{M}$-generic filter $G \subseteq \P$ and the $\Delta^1_0$-diagram of $\mathcal{M}[G]$. 
\end{theorem}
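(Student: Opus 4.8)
The plan is to run the proofs of Theorems~\ref{Theorem.Computing-generic-filter} and~\ref{Theorem.Computing-diagram-of-extension} one sort higher. The key observation is that the class $\Delta^1_0$ of set-quantifier-only formulae plays, in the two-sorted setting, exactly the role that the \Levy{} $\Delta_0$ class played in the first-order setting: a $\Delta^1_0$-oracle decides every atomic set-class membership fact---in particular membership in any named class, such as $\P$, its ordering, and the given atomic forcing relation, which I denote $\forces_{\mathrm{at}}$---and it decides any assertion built from such facts using set quantifiers bounded by classes. So $\forces_{\mathrm{at}}$ is fully queryable from the $\Delta^1_0$-diagram, just as $\leq_\P$ and the atomic forcing relation were queryable from the $\Delta_0$-diagram before.

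First I would compute $G$. Since $\mathcal M$ is countable there are only countably many classes, so I would enumerate them $\mathcal C_0,\mathcal C_1,\ldots$ and, for each, decide the $\Delta^1_0$ assertion that $\mathcal C_i$ is a dense subclass of $\P$ (namely $\forall p\in\P\,\exists q\in\mathcal C_i\ q\leq_\P p$), thereby extracting a sublist $D_0,D_1,\ldots$ of the dense subclasses. I would then build a descending sequence $p_0\geq_\P p_1\geq_\P\cdots$ with $p_n\in D_n$ exactly as in Theorem~\ref{Theorem.Computing-generic-filter}: given $p_n$, search the sets of $\mathcal M$ for the first $q\leq_\P p_n$ with $q\in D_{n+1}$, which exists by density and is found since both conditions are $\Delta^1_0$-decidable. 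Setting $G=\{q:\exists n\ p_n\leq_\P q\}$ yields an $\mathcal M$-generic filter, and membership in $G$ is fully decidable using incompatibility, again as in the earlier proof. The one genuine difference from set forcing is that there is no \emph{set} of all dense subclasses to enumerate; but enumerating the classes themselves and testing density works precisely because density is $\Delta^1_0$ and $\mathcal M$ is countable.

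Next I would present $\mathcal M[G]$ and decide its atomic diagram. Being a set-name is $\Delta^1_0$ by the tree-witnessing argument of Lemma~\ref{lemma:Pnames} (the second coordinates are now tested for membership in the class $\P$), and being a class-name is then $\Delta^1_0$ outright, as it merely asks that every element be a pair consisting of a set-name and a condition. Thus I can enumerate names of each sort. Using $\forces_{\mathrm{at}}$ and $G$ I would compute $\sigma=_G\tau$, $\sigma\in_G\tau$, and the set-class membership $\sigma_G\mathrel{\in}\dot A_G$ by the usual prescription ``there is $p\in G$ forcing the atomic fact,'' pick representatives, and place both sorts onto $\omega$. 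To climb from the atomic diagram to the full $\Delta^1_0$-diagram I would invoke the forcing theorem: since $\forces_{\mathrm{at}}$ is a class, the relation $p\forces\varphi$ for each $\Delta^1_0$ formula $\varphi$ (with set-name and class-name parameters) is uniformly definable from $\forces_{\mathrm{at}}$, see \cite{HKS2018}, and $\mathcal M[G]\satisfies\varphi(\vec\sigma,\vec{\dot A})$ holds iff some $p\in G$ forces $\varphi$. Because the class of conditions deciding $\varphi$ is dense and is itself a class, $G$ meets it, so searching for a $p\in G$ forcing $\varphi$ or forcing $\neg\varphi$ terminates and decides the diagram.

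The main obstacle is making the forcing theorem do its work at exactly this level over \GB. I must check that the lone hypothesis ``$\forces_{\mathrm{at}}$ is a class'' propagates to give, for every $\Delta^1_0$ formula, a forcing relation that is genuinely a class of $\mathcal M$ and whose definition is computable from $\varphi$. The delicate case is the unbounded quantifier step $p\forces\exists x\,\varphi(x)$, whose natural definition quantifies over all set-names. This quantifier is still a \emph{set} quantifier, so the resulting relation is $\Delta^1_0$ in $\forces_{\mathrm{at}}$ and hence a class by the predicative comprehension available in \GB{}---and this is exactly why the complexity level lines up, since $\Delta^1_0$ (no class quantifiers) matches predicative comprehension. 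Granting this, which is the content of the cited characterization of the forcing theorem by the definability of the atomic relation, the remaining computational bookkeeping is a faithful transcription of the set-forcing arguments.
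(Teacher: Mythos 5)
Your proposal follows essentially the same route as the paper's proof: enumerate the classes of $\mathcal M$, use the $\Delta^1_0$-decidability of ``is a dense subclass of $\P$'' to list the dense classes and build $G$ as in Theorem~\ref{Theorem.Computing-generic-filter}, observe that set-names and class-names are $\Delta^1_0$-recognizable, use the atomic forcing relation (a class by hypothesis) to compute $=_G$ and $\in_G$ and pick representatives, and then note that $p\forces\varphi$ for $\Delta^1_0$ formulae $\varphi$ remains $\Delta^1_0$ since only set quantifiers are added. Your additional remarks---that there is no single set of dense subclasses so one must enumerate classes and test density, and that predicative comprehension in \GB{} is what keeps the forcing relation for $\Delta^1_0$ formulae a class---are correct elaborations of points the paper states more tersely.
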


\begin{proof}
  Begin by observing that it is $\Delta^1_0$ to say that a class is a dense subclass of $\P$. So from the $\Delta^1_0$-diagram for $\mathcal M$ we can compute a list of all the dense subclasses of $\P$. We can then compute an $\mathcal M$-generic filter $G \subseteq \P$ as in Theorem~\ref{Theorem.Computing-generic-filter}. Next, note that being a class $\P$-name is a $\Delta^1_0$ property, because a class $\P$-name is a class whose elements are all pairs of elements of $\P$ and a set $\P$-name, and being a set $\P$-name is a $\Delta^1_0$ property. So we can decide which elements of $\mathcal M$ are $\P$-names. Further observe that the relations
$$
p \forces  \sigma \in \tau, \quad p \forces \sigma = \tau, \quad p \forces \sigma \in T, \quad p \forces \Sigma = T
$$
are all $\Delta^1_0$. As in the proof of Theorem~\ref{Theorem.Computing-diagram-of-extension} given the generic $G$ we can define relations $=_G$ and $\in_G$ on the $\P$-names, which are decidable from the $\Delta^1_0$-diagram. And so we can pick out representatives of the $=_G$-equivalence classes, thereby computing the atomic diagram of $\mathcal M[G]$. Finally, observe that for a $\Delta^1_0$-formula $\varphi$ that $p \forces \varphi(\Sigma_0, \ldots, \Sigma_n)$ is $\Delta^1_0$, as it is defined from the atomic forcing relation by quantifying over sets. So we can decide the $\Delta^1_0$-diagram of $\mathcal M[G]$.
\end{proof}

As in the set forcing case, from the full elementary diagram of $\mathcal M$ we can compute the full elementary diagram of the class forcing extension. 

\begin{theorem}
Let $\<\mathcal{M},\in^{\mathcal{M}}>$ be a countable model of $\GB{}$ and suppose $\P \in \mathcal M$ is a class forcing notion with its atomic forcing relation in $\mathcal M$. Then, from an oracle for the second-order elementary diagram of $\mathcal M$ there is a computable procedure to compute an $\mathcal M$-generic filter $G \subseteq \P$ and the second-order elementary diagram of $\mathcal{M}[G]$. \end{theorem}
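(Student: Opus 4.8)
The plan is to adapt the proof of Theorem~\ref{Theorem.Computing-diagram-of-extension} to the second-order setting, using the class forcing theorem in place of its set-forcing counterpart. By Theorem~\ref{Theorem.Computing-first-order-diagram-for-class-forcing}, the $\Delta^1_0$-diagram of $\mathcal M$---which is certainly computable from the full second-order diagram supplied by our oracle---already lets us compute an $\mathcal M$-generic filter $G \subseteq \P$ together with the atomic diagram of the presentation of $\mathcal M[G]$. What remains is to decide, for an arbitrary second-order formula $\varphi$ with set and class $\P$-name parameters $\sigma$, whether $\mathcal M[G] \satisfies \varphi(\sigma_G)$.

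The central tool is the forcing theorem: for each second-order formula $\varphi$ there is a forcing relation $p \forces \varphi(\sigma)$, expressible by a second-order formula $\text{force}_\varphi(p,\sigma)$ over $\mathcal M$ built from the given atomic forcing relation, such that $\mathcal M[G] \satisfies \varphi(\sigma_G)$ if and only if some $p \in G$ has $\mathcal M \satisfies p \forces \varphi(\sigma)$. First I would check that the map $\varphi \mapsto \text{force}_\varphi$ is computable, proceeding by recursion on $\varphi$. The atomic and $\Delta^1_0$ cases are exactly those supplied by Theorem~\ref{Theorem.Computing-first-order-diagram-for-class-forcing}; Boolean connectives, bounded quantifiers, and unbounded set quantifiers are handled just as in the first-order argument (the set-quantifier clause quantifies over set $\P$-names, which is harmless in $\mathcal M$); and the genuinely new clause is the unbounded class quantifier, where $p \forces \exists X\, \varphi(X)$ is expressed by saying that the conditions $q \le p$ forcing $\varphi(\dot X)$ for some class $\P$-name $\dot X$ are dense below $p$.

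Granting the forcing theorem, the algorithm is then immediate: to decide $\mathcal M[G] \satisfies \varphi(\sigma_G)$, compute $\text{force}_\varphi(p,\sigma)$ and query the oracle for the second-order diagram of $\mathcal M$ to search for a condition $p \in G$ with $\mathcal M \satisfies p \forces \varphi(\sigma)$. Since $G$ is already decidable from our data and $\text{force}_\varphi(p,\sigma)$ is a second-order formula about $\mathcal M$, this search is effective in the given oracle and correctly decides the second-order diagram of $\mathcal M[G]$.

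I expect the main obstacle to lie in the class-quantifier clause of the recursion---establishing that $p \forces \exists X\, \varphi(X)$ is correctly captured by a second-order formula over the $\GB$ model $\mathcal M$, and that the forcing theorem equivalence survives at this level. Because $\GB$ offers only predicative class comprehension while forcing a class-existential statement naturally quantifies over class $\P$-names (themselves classes), one must verify that this quantification can be carried out within the second-order language, so that the oracle for $\mathcal M$'s second-order diagram can answer it; the hypothesis that the atomic forcing relation is already a class in $\mathcal M$ is precisely what anchors the recursion, playing the role that pretameness and a definable atomic forcing relation played in the first-order approach. Once the forcing theorem is secured at every second-order level, the remainder is a routine transcription of the proof of Theorem~\ref{Theorem.Computing-diagram-of-extension}.
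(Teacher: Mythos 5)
Your proposal is correct and follows essentially the same route as the paper: the paper's proof simply invokes Theorem~\ref{Theorem.Computing-first-order-diagram-for-class-forcing} to obtain $G$ and the $\Delta^1_0$-diagram of $\mathcal M[G]$, and then observes that $p \forces \varphi(T_0,\ldots,T_n)$ is second-order definable (from the given atomic forcing relation) whenever $\varphi$ is second-order, so the full second-order diagram oracle for $\mathcal M$ decides it. Your worry about the class-quantifier clause is the right thing to flag, but it only matters for the level-by-level refinement (which the paper handles separately via Class Collection); for the full diagram, any second-order formula suffices, exactly as you conclude.
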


To clarify, by the \emph{second-order elementary diagram} of a model of second-order set theory we mean second-order in the sense of allowing quantifying over the classes of the model---that is, in the Henkin semantics---not second-order in some external sense. 

\begin{proof}
By Theorem~\ref{Theorem.Computing-first-order-diagram-for-class-forcing} we can compute $G$ and the $\Delta^1_0$-diagram of $\mathcal M[G]$. The result for the full diagram then follows from the fact that $p \forces \varphi(T_0, \ldots, T_n)$ is second-order definable when $\varphi$ is second-order. 
\end{proof}

It is natural to ask whether this goes level-by-level. The $\Sigma^1_n$ and $\Pi^1_n$-formulae are inductively defined from the $\Delta^1_0$-formulae similar to how in first-order set theory the $\Sigma_n$ and $\Pi_n$-formulae are defined from the $\Delta_0$-formulae. For instance, a formula is $\Sigma^1_2$ if it is of the form $\exists X \forall Y\ \varphi(X,Y)$, where both quantifiers are over the classes and $\varphi$ is $\Delta^1_0$. To argue this goes level-by-level as in the set forcing case we would need that if $\varphi$ is $\Sigma^1_n$ then $p \forces \varphi$ is $\Sigma^1_n$. To prove the analogous fact for the set forcing case we used the Replacement schema.\footnote{Cf. Footnote \ref{Footnote.Replacement}.} The same argument works in the class forcing case if our model satisfies the Class Collection schema, a second-order version of the Replacement schema.

\begin{definition}
The \emph{Class Collection} axiom schema asserts that if for every set there is a class satisfying some property, then there is a single class coding the ``meta-class'' consisting of a witnessing class for every set. Formally, instances of this schema take the form
$$\forall \bar P \left[ (\forall x \exists Y\ \varphi(x,Y,\bar P)) \Rightarrow (\exists C \forall x\ \exists i\  \varphi(x,(C)_i,\bar P)) \right],$$
where $(C)_i = \{ y : (i,y) \in C \}$.\footnote{Observe that in the presence of Global Choice---the assertion that every class can be well-ordered---we may equivalently ask that this index $i$ for $x$ be $x$ itself.}
For $n \in \omega$, the \emph{$\Sigma^1_n$-Class Collection schema} is the restriction of Class Collection to $\Sigma^1_n$-formulae.
\end{definition}

It is simple to check that $\Sigma^1_n$-Class Collection implies $\Sigma^1_n$-Comprehension. The converse does not hold. Even $\KM$ with the full impredicative Comprehension schema cannot prove $\Sigma^1_0$-Class Collection \cite{GitmanHamkinsKaragila:KM-set-theory-does-not-prove-the-class-Fodor-theorem}.
However, adding Class Collection does not increase consistency strength---see \cite[Theorem 2.5]{marek-mostowski1975} for the $\KM$/full Class Collection case and \cite{ratajczyk1979} for the level-by-level case.

\begin{corollary}
Let $\<\mathcal{M},\in^{\mathcal{M}}>$ be a countable model of $\GB{}$ $+$ $\Sigma^1_n$-Class Collection and suppose $\P \in \mathcal M$ is a class forcing notion with its atomic forcing relation in $\mathcal M$. Then, from an oracle for the $\Sigma^1_n$-elementary diagram of $\mathcal M$ there is a computable procedure to produce an $\mathcal M$-generic filter $G \subseteq \P$ and the  $\Sigma^1_n$-elementary diagram of $\mathcal{M}[G]$. \qed
\end{corollary}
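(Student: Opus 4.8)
The plan is to mimic the proof of Theorem~\ref{Theorem.Computing-diagram-of-extension}, replacing the appeal to the Replacement schema by $\Sigma^1_n$-Class Collection. First I would invoke Theorem~\ref{Theorem.Computing-first-order-diagram-for-class-forcing} to compute, from the $\Delta^1_0$-diagram (and hence \emph{a fortiori} from the $\Sigma^1_n$-diagram for $n \ge 1$), an $\mathcal{M}$-generic filter $G \subseteq \P$ together with the $\Delta^1_0$-diagram of $\mathcal{M}[G]$; in particular $G$ is decidable, the class of $\P$-names is available, and the relations $=_G$ and $\in_G$ are computable. As in the set-forcing case, the map $\varphi \mapsto (p \forces \varphi)$ sending a second-order formula to the formula defining its forcing relation in $\mathcal{M}$ is effective, so it suffices to control the complexity of $p \forces \varphi$ as $\varphi$ ranges over the $\Sigma^1_n$-formulae.

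The heart of the argument is the following claim, to be proved by induction on $n$ (via the number of class-quantifier alternations, with set quantifiers and the $\Delta^1_0$-matrix treated as free at each level): if $\varphi$ is $\Sigma^1_n$ then $p \forces \varphi(\Sigma_0,\ldots,\Sigma_k)$ is $\Sigma^1_n$, and dually the forcing relation on $\Pi^1_n$-formulae is $\Pi^1_n$. The base case, $\varphi$ being $\Delta^1_0$, is exactly what Theorem~\ref{Theorem.Computing-first-order-diagram-for-class-forcing} established. The universal step is easy: $p \forces \forall X\ \psi(X)$ holds iff $\forall T\ (p \forces \psi(T))$, and since $p \forces \psi(T)$ is $\Sigma^1_{n-1}$ by induction, prefixing the class-universal $\forall T$ yields a $\Pi^1_n$-relation with no collection needed. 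The crucial case is the existential step: $p \forces \exists X\ \psi(X)$ holds iff it is dense below $p$ to force $\psi$ of some class name, that is,
$$ \forall q \le p\ \exists r \le q\ \exists T\ \bigl(r \forces \psi(T)\bigr), $$
where $\exists T$ ranges over class $\P$-names and, by the inductive hypothesis, $r \forces \psi(T)$ is $\Pi^1_{n-1}$. Absorbing the set-existential over $r$, this has the shape $\forall q \le p\ \exists T\ \chi(q,T)$ with $\chi$ of complexity $\Pi^1_{n-1}$, and applying $\Sigma^1_n$-Class Collection produces a single class $C$ so that the relation is equivalent to $\exists C\ \bigl[\forall q \le p\ \exists i\ \chi(q,(C)_i)\bigr]$; this is $\Sigma^1_n$, because the inner matrix uses only set quantifiers over a formula of complexity $\Pi^1_{n-1}$.

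With the claim in hand the conclusion follows as in Theorem~\ref{Theorem.Computing-diagram-of-extension}. By the forcing truth lemma, $\mathcal{M}[G] \models \varphi(\Sigma_0,\ldots,\Sigma_k)$ iff there is $p \in G$ with $\mathcal{M} \models (p \forces \varphi(\Sigma_0,\ldots,\Sigma_k))$. Since $G$ is decidable, and $p \forces \varphi$ is a $\Sigma^1_n$-formula definable in $\mathcal{M}$ and uniformly computable from $\varphi$, and since an oracle for the $\Sigma^1_n$-diagram of $\mathcal{M}$ decides both $\Sigma^1_n$- and $\Pi^1_n$-assertions about $\mathcal{M}$ (the two being complementary as decision problems on sentences), we can decide whether any given $\Sigma^1_n$-assertion belongs to the diagram of $\mathcal{M}[G]$. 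This computes the $\Sigma^1_n$-elementary diagram of $\mathcal{M}[G]$ from that of $\mathcal{M}$, as desired.

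I expect the main obstacle to be the existential step: one must verify that $\Sigma^1_n$-Class Collection is precisely what licenses collecting the witnessing class names under a single class quantifier without raising complexity, and that the collection is legitimate. This in turn requires checking that the density characterization of $p \forces \exists X\ \psi(X)$ is genuinely available for the forcing $\P$ — which is where the hypothesis that $\P$ has its atomic forcing relation as a class in $\mathcal{M}$, together with the standard class-forcing truth lemma, is needed — and that interpreting class $\P$-names by $G$ indeed yields all classes of $\mathcal{M}[G]$, so that a class witness for $\exists X$ can always be named.
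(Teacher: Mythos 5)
Your proposal is correct and is exactly the argument the paper intends: the corollary is stated with no written proof beyond the preceding remark that the level-by-level argument of Theorem~\ref{Theorem.Computing-diagram-of-extension} goes through once $\Sigma^1_n$-Class Collection replaces Replacement in showing that $p \forces \varphi$ has the same complexity as $\varphi$, with Theorem~\ref{Theorem.Computing-first-order-diagram-for-class-forcing} supplying the base case. Your write-up simply fleshes out the details the authors leave implicit, in the same way they sketch the set-forcing analogue in Footnote~\ref{Footnote.Replacement}.
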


\section{Functoriality and Interpretability}
\label{sec:functoriality}

In this article we are considering an effective procedure mapping one class of models
on the domain $\omega$ to another such class (in fact, to the same class). In this section we recall some known theorems about this scenario and what it says about effective interpretability, relating these general facts to our specific case of a model $M$ of set theory inside a forcing extension $M[G]$. 

Theorems from \cite{HTM3} and \cite{HTM2} relate such procedures
to the interpretability of each output model (in the second class) in the
corresponding input model (in the first class).  Here we repeat
the simplest versions of those theorems.  The gist is that interpretations
of one structure in another by $\mathcal{L}_{\omega_1,\omega}$-formulae
(and with no fixed arity on the domain of the interpretation) correspond bijectively
to functors from the category of isomorphic copies of the second structure
(with isomorphisms as the morphisms in the category)
into the category of isomorphic copies of the first.
Rather than attempt to define all the terms here, we refer the reader to
\cite[Definition 1.2]{HTM3}, \cite[Definition 2.1]{HTM2}, and \cite[Definition 5.1]{M14} for the notions of interpretability, and to \cite[Definition 1.2]{HTM3}, \cite[Definition 2.7]{HTM2}, and \cite[Definition 3.1]{MPSS} for the notions about functors.

\begin{theorem}[Theorem 1.5 of \cite{HTM3}]
\label{thm:HTM3}
Let $\mathcal A$ and $\mathcal B$ be countable structures. Then
$\mathcal A$ is effectively interpretable in $\mathcal B$ if and only if
there exists a computable functor from the category $\mathrm{Iso}(\mathcal B)$
of isomorphic copies of $\mathcal B$ (under isomorphism) to the
corresponding category $\mathrm{Iso}(\mathcal A)$.
\end{theorem}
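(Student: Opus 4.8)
The plan is to prove both directions of the equivalence, with the forward direction being routine and the reverse direction carrying the real content. For the forward direction, suppose $\mathcal A$ is effectively interpretable in $\mathcal B$. This means we are handed a computable assignment of: a domain formula carving out a subset $D \subseteq \mathcal B^{<\omega}$ of tuples, a congruence formula defining an equivalence relation $\sim$ on $D$, and formulas interpreting each relation and function symbol of $\mathcal A$, all of them $\Sigma_1^c$ with $\Sigma_1^c$ complements, such that the quotient $D/{\sim}$ equipped with the interpreted operations is isomorphic to $\mathcal A$. I would define the functor $F$ on an object $\mathcal B'$ by running these formulas against the atomic diagram of $\mathcal B'$: since each relation and its negation is $\Sigma_1^c$, the sets $D$, $\sim$, and the interpreted relations are uniformly computable from the diagram, so I can enumerate $D$, select a computable transversal for $\sim$, and thereby produce the atomic diagram of a copy $F(\mathcal B')$ of $\mathcal A$ on $\omega$. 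On a morphism $g \colon \mathcal B' \to \mathcal B''$ I would set $F(g)$ to be the map induced on $\sim$-classes by the coordinatewise action of $g$ on tuples; because isomorphisms preserve the truth of $\mathcal L_{\omega_1,\omega}$-formulas, this is a well-defined isomorphism $F(\mathcal B') \to F(\mathcal B'')$, and functoriality is immediate from the fact that the action on tuples is coordinatewise. Both assignments are computable by the effectiveness of the defining formulas, so $F$ is a computable functor.

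For the reverse direction I would start from a computable functor $F$, presented by a Turing functional $\Phi$ computing output diagrams from input diagrams and a functional $\Phi_*$ computing $F(g)$ from $g$ together with the two diagrams. Fixing one copy $\mathcal B$, the output $F(\mathcal B)$ is a copy of $\mathcal A$ on $\omega$, and the goal is to attach to each $n \in \omega = \mathrm{dom}\, F(\mathcal B)$ a tuple $\bar b_n$ from $\mathcal B$ in an isomorphism-invariant way, so that $n \mapsto \bar b_n$ witnesses an interpretation. The central observation is a naturality/pullback fact: because $\Phi$ reads only finitely much of its input before committing to any output fact about $n$, the element $n$ is determined by a finite tuple $\bar b_n$ of $\mathcal B$, and the morphism part $\Phi_*$ forces this assignment to commute with isomorphisms, so that $F(g)$ sends the element named by $\bar b$ to the element named by $g(\bar b)$. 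Exploiting this naturality I would read off $\Sigma_1^c$ definitions (with matching $\Sigma_1^c$ definitions of their complements) for the domain of named tuples, for the congruence ``$\bar b$ and $\bar b'$ name the same element,'' and for each interpreted relation, thereby recovering an effective interpretation. Here it is essential that the $\bar b_n$ be allowed to vary in length, which is exactly the no-fixed-arity hypothesis.

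The hard part will be the reverse direction, and specifically establishing the naturality of the tuple-assignment: one must show that the finite use of $\Phi$ can be organized so that the name $\bar b_n$ is not an artifact of the particular presentation $\mathcal B$ but transforms correctly under every isomorphism, which is precisely where the functor's action on morphisms via $\Phi_*$ must be invoked. Converting this invariance into honest $\Sigma_1^c$ formulas --- and verifying that both each relation and its complement come out $\Sigma_1^c$, so that the interpretation is genuinely effective rather than merely computable from one fixed copy --- is the technical core, and is the step I expect to occupy most of the work.
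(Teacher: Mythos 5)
First, a point of order: the paper does not prove this theorem at all --- it is imported verbatim as Theorem 1.5 of \cite{HTM3}, so there is no in-paper proof to compare your attempt against. Measured against the actual argument in that reference, your decomposition is the right one: the forward direction (interpretation $\Rightarrow$ functor) is routine and your sketch of it is essentially correct, since the requirement that each defining formula \emph{and its complement} be $\Sigma^c_1$ makes the domain, the congruence, and the interpreted relations uniformly computable from the diagram of any copy $\mathcal B'$, and the coordinatewise action on tuples gives the morphism part. The reverse direction is indeed where all the content lives, and your identification of the key mechanism --- the finite use of $\Phi$ plus the naturality forced by $\Phi_*$ --- matches the strategy of \cite{HTM3}.

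That said, what you have written for the reverse direction is a plan rather than a proof, and the step you defer is precisely the theorem. The assertion that ``$n$ is determined by a finite tuple $\bar b_n$ of $\mathcal B$'' in an isomorphism-invariant way does not follow merely from $\Phi$ having finite use: the finite oracle information used is a fragment of the atomic diagram of one particular presentation, and a priori another tuple realizing the same finite quantifier-free type, or the same tuple in a rearranged presentation, could lead $\Phi$ to a different output. In \cite{HTM3} this is handled by first replacing $(\Phi,\Phi_*)$ with an equivalent pair satisfying explicit coherence conditions on finite strings (so that $\Phi_*$ behaves consistently on finite partial isomorphisms), and only then defining the domain of the interpretation from tuples $\bar b$ via the behaviour of $\Phi$ on canonical enumerations beginning with $\bar b$, with $\Phi_*$ supplying the proof that the congruence and the relations, together with their complements, are $\Sigma^c_1$-definable. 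Until that normalization and the ensuing verification are carried out, the reverse direction is not established; so I would grade this as the correct approach with its decisive step missing.
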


\begin{theorem}[Theorem 2.9 of \cite{HTM2}]
\label{thm:HTM2}
Let $\mathcal B$ and $\mathcal A$ be countable structures, possibly in different countable languages.
For each Baire-measurable functor $F:\mathrm{Iso}(\mathcal B)\to \mathrm{Iso}(\mathcal A)$ there is an
infinitary interpretation $\mathcal I$ of $\mathcal A$ within $\mathcal B$, such that $F$ is naturally isomorphic
to the functor $F_{\mathcal I}$ associated to $\mathcal I$.  Furthermore, if $F$ is $\Delta^0_{\alpha}$
in the lightface Borel hierarchy, then the interpretation can be taken to use
computable $\Delta^0_\alpha$ $\mathcal{L}_{\omega_1,\omega}$-formulae and the isomorphism
between $F$ and $F_{\mathcal I}$ can be taken to be $\Delta^0_\alpha$.
\end{theorem}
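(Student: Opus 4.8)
The plan is to reconstruct the interpretation $\mathcal I$ from the functor $F$ by analyzing how $F$ transforms a sufficiently generic copy of $\mathcal B$, and then to read the defining $\mathcal L_{\omega_1,\omega}$-formulas off of the finite conditions that control $F$. First I would invoke generic continuity: since $F$ is Baire-measurable, there is a comeager set of copies $\hat{\mathcal B}$ of $\mathcal B$ (on domain $\omega$) on which the object map $\hat{\mathcal B}\mapsto F(\hat{\mathcal B})$ is continuous, so that each atomic fact about $F(\hat{\mathcal B})$ is forced by a finite subfunction of $\hat{\mathcal B}$. In the lightface $\Delta^0_\alpha$ case one takes this comeager set and the associated forcing relation to be $\Delta^0_\alpha$, which is what will ultimately control the complexity of the formulas produced.

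The heart of the argument is to name each element of $F(\hat{\mathcal B})$ by a finite tuple from $\hat{\mathcal B}$, and here the functorial action on isomorphisms, not merely the object map, is essential. Exploiting the action of the group of permutations of $\omega$---each permutation $g$ is an isomorphism $\hat{\mathcal B}\to g\cdot\hat{\mathcal B}$, so $F(g)$ is an isomorphism $F(\hat{\mathcal B})\to F(g\cdot\hat{\mathcal B})$---I would run a Pettis-style genericity argument to show that for a generic $\hat{\mathcal B}$ every $k\in\omega=\dom\bigl(F(\hat{\mathcal B})\bigr)$ is determined by some finite tuple $\bar b$ from $\hat{\mathcal B}$, in the sense that whenever an automorphism of $\hat{\mathcal B}$ fixes $\bar b$ its $F$-image fixes $k$. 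This is the step where a bare Baire-measurable assignment of objects would be insufficient: coherence of $F$ on morphisms is exactly what pins the names down and makes them transport correctly between copies.

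With naming in hand I would assemble the interpretation. The domain formula $\partial_{\mathcal I}(\bar x)$ asserts that $\bar x$ names some element; the equivalence $\bar x\sim_{\mathcal I}\bar y$ asserts that $\bar x$ and $\bar y$ name the same element; and for each relation symbol of $\mathcal A$ a formula records whether the named elements stand in that relation in $F(\hat{\mathcal B})$. Each such assertion is, by generic continuity, a countable disjunction over the finite conditions of $\hat{\mathcal B}$ that force the relevant atomic fact, hence expressible by an $\mathcal L_{\omega_1,\omega}$-formula; tracking the forcing relation through the $\Delta^0_\alpha$ hypothesis yields computable $\Delta^0_\alpha$ formulas. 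The map sending the class of a naming tuple $\bar b$ to the element of $F(\hat{\mathcal B})$ it names is then the component at $\hat{\mathcal B}$ of a natural isomorphism $F\cong F_{\mathcal I}$; naturality is immediate from functoriality, since $F$ commutes with the isomorphisms along which names are transported, and the isomorphism is $\Delta^0_\alpha$ by the same complexity bookkeeping.

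I expect the main obstacle to be the naming step: showing that a single family of $\mathcal L_{\omega_1,\omega}$-formulas names every element uniformly across all generic copies, and that these names behave coherently under the morphism action, so that the resulting $F_{\mathcal I}$ is genuinely \emph{naturally} isomorphic to $F$ rather than merely pointwise isomorphic copy by copy. Controlling the Borel rank of the Pettis-category argument so that the output stays $\Delta^0_\alpha$ is the delicate quantitative part of this step.
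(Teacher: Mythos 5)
The paper does not prove this statement: it is imported verbatim as Theorem~2.9 of \cite{HTM2}, with the authors explicitly deferring all definitions and proofs to that reference. Your outline nonetheless matches the strategy of the proof given there --- generic continuity of a Baire-measurable map on a comeager set of presentations, a Pettis-style support argument using the action of $F$ on the isomorphisms induced by permutations of $\omega$ to name each element of $F(\hat{\mathcal B})$ by a finite tuple of $\hat{\mathcal B}$, and the assembly of the interpreting $\mathcal L_{\omega_1,\omega}$-formulae as disjunctions over the finite conditions forcing each atomic fact, with the $\Delta^0_\alpha$ bookkeeping riding along. You have correctly identified the crux (that functoriality on morphisms, not mere Baire-measurability of the object map, is what makes the naming coherent and the isomorphism natural), so as a sketch this is faithful to the source; a full verification would of course require carrying out the support lemma and the complexity calculation in detail as in \cite{HTM2}.
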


Those sources also extend these theorems to situations of bi-interpretability
between countable structures, and (of greater relevance here)
to interpretations that hold uniformly from one class of structures to another,
rather than interpretations merely of $\mathcal A$ in $\mathcal B$.  For us, the point is simply
that results about functors and their complexity, such as the simple ones
we derive in this section, correspond to results about interpretability.
For example, a computable functor mapping copies of $M$ to copies
of $M[G]$ would correspond to an effective interpretation of $M[G]$ in $M$.

A computable functor is a particularly strong kind of operator on countable structures.
Characteristically, attempting to create such an operator requires one to determine
exactly which aspects of the structure are relevant to the operation.
The following analysis is not difficult, but it serves as a good example of this principle.

We here extend the \Levy\ signature defined above to a larger signature.
Along with the symbols from the former, this new signature has countably many
constants $p$, $c$, and $d_0,d_1,\ldots$.  The intention (which can be expressed
as an $\mathcal{L}_{\omega_1,\omega}$-formula, but not by any finitary axiom) is that 
$p$ denotes the partial order $\P$ used for forcing in this structure,
that the constants $d_j$ name precisely the dense subsets of $\P$,
and that $c$ names a choice function on the nonempty subsets of $\P$ in $M$.
We consider models of \ZFC\ here, not just \ZF, to ensure that $M$ will
contain such a choice function, although in many cases $\P$ will
have a readily definable choice function.  It is important
that $c$ be internal to $M$, and so formally we treat $c$ as a constant symbol,
rather than a function symbol.
The domain of our functor is the category of models of \ZFC\ on the domain $\omega$,
in our larger signature, satisfying these conditions.
The morphisms between two such structures are exactly the isomorphisms of structures
in this signature.  The range of the functor is the category of models of \ZFC\ on the
domain $\omega$ in the \Levy\ signature, since in the forcing extension,
the forcing notion is no longer relevant.  (If desired, one can use the larger signature, with the check-name of $p^M$ as $p^{M[G]}$ and similarly for $c$ and the $d_j$.)

The construction of the generic $G$ is now done effectively by a prescribed method, which is
more precise than the method of Theorem \ref{Theorem.Computing-atomic-diagram-of-extension}.
The added precision is necessary to allow the functor to be computed with no more constants than these.
In the new method, the functional $\Phi$ is given as an oracle the atomic diagram of $M$
in the larger signature, namely $\Delta_0(M)\oplus\langle p^M,c^M,d_0^M,d_1^M,\ldots\rangle$,
and searches first for the maximum element $p_0$ of the forcing notion $\P$
named by the constant $p$.  The $\Delta_0$-diagram identifies it, as it is defined by
$p_0\in\P \land (\forall q\in\P) q\leq_{\P}p_0$.  Next, for each $s$ in turn, we find the
element $d_s^M$ and use $c$ to select an extension $p_{s+1}\leq p_s$
in $d_s\cap\P$. We can do so because the set $x = \{ q \in^M d_s : q \le p_s \} \in M$ is $\Delta_0$-definable from parameters we have access to. So we can search for the element of $M$ which satisfies this defining property and know that the search must terminate. Then we can search for $c(x) \in M$ and choose it to be $p_{s+1}$.
While we use the $<$ relation on the domain $\omega$ of $M$ to search for $x$ and then $c(x)$, note that this procedure is nevertheless uniform as the search is looking for a uniquely determined object.

If $f:M \to M^*$ is an isomorphism (in the larger signature),
then the same procedure $\Phi$ on $\Delta_0(M^*,p^{M^*},\vec{d}^{\,M^*},c^{M^*})$
will clearly produce a forcing extension $M^*[G^*]$ isomorphic to $M[G]$.
The point of this method is that there is a Turing functional $\Phi_*$ which
from the oracle
$$\Delta_0(M,p^M,\vec{d}^{\,M},c^M)\oplus f \oplus \Delta_0(M^*,p^{M^*},\vec{d}^{\,M^*},c^{M^*})$$
computes the corresponding isomorphism from $M[G]$ onto $M^*[G^*]$.
This map on isomorphisms will respect composition and, if given the identity
isomorphism on $M$, will produce the identity isomorphism on $M[G]$.
We do not consider it necessary to write out the proof of this result.
This $\Phi_*$, together with the functional $\Phi$ that computes $\Delta_0(M[G]) = \Phi^{\Delta_0(M)}$,
constitutes a program for the forcing functor, showing that
$\mathcal{F}$ is in fact a computable functor, according to the definition in \cite{MPSS}.

We remark additionally that the forcing functor works exactly the same way
in the category with the same objects as above, but in which the morphisms now include
all injective homomorphisms from any one forcing structure to any other.
Of course, the image of a morphism $g$ now need only be an injective homomorphism itself,
not an isomorphism, but functoriality still holds, effectively.

None of the foregoing is at all difficult. With the new constant symbols enumerating the dense sets we get functoriality but, as will be established in the next section, if we omit them we do not get functoriality.
Meanwhile, though, we can apply Theorem \ref{thm:HTM3} to our computable functor and extract
from it an effective interpretation, uniformly across all forcing extensions built by the functor.

\begin{corollary}
\label{cor:interpretations}
There exist fixed computable infinitary $\Sigma_1$-formulae that,
for every ground model
$(M, \Delta_0(M),p,c,d_0,d_1,\ldots)$ of \ZFC\ in the larger signature,
give an effective interpretation of the forcing extension $M[G]$
in $(M, \Delta_0(M),p,c,d_0,d_1,\ldots)$, provided
$G$ is built from this presentation as described in this section.
(The interpretation allows tuples from $M$ of arbitrary finite arity in its domain.)
\end{corollary}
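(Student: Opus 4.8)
The plan is to obtain the corollary as essentially a direct application of Theorem~\ref{thm:HTM3} to the computable functor $\mathcal{F}$ just constructed in this section. First I would recall precisely what the preceding discussion produced: the pair of Turing functionals $(\Phi,\Phi_*)$, where $\Phi$ computes $\Delta_0(M[G])$ from the oracle $\Delta_0(M,p^M,\vec d^{\,M},c^M)$, and where $\Phi_*$ sends an isomorphism $f\colon M\to M^*$ in the larger signature to the induced isomorphism $M[G]\to M^*[G^*]$, respecting composition and carrying identities to identities. These witness that forcing, as built by the prescribed method of this section, is a computable functor $\mathcal{F}\colon \mathrm{Iso}(M,\Delta_0(M),p,c,\vec d)\to \mathrm{Iso}(M[G])$ in the sense of \cite{MPSS}.

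Next I would apply Theorem~\ref{thm:HTM3} with the source structure $\mathcal B=(M,\Delta_0(M),p,c,\vec d)$ and the target structure $\mathcal A=M[G]$. Since we have exhibited a computable functor from $\mathrm{Iso}(\mathcal B)$ to $\mathrm{Iso}(\mathcal A)$, the theorem yields an effective interpretation of $M[G]$ in the ground-model structure. By the notion of effective interpretability used in \cite{HTM3} and \cite{M14}, such an interpretation is exactly given by computable infinitary $\Sigma_1$-formulae that define, inside the ground-model structure, the domain of the interpretation as a set of tuples of arbitrary finite arity, together with the interpreted equality and the interpreted atomic relations of $M[G]$. This already accounts both for the $\Sigma_1$ complexity and for the arbitrary-arity clause in the statement, and it matches the more refined level-by-level claim of Theorem~\ref{thm:HTM2} at $\alpha=1$.

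The one point requiring genuine care, and where I expect the real content of the proof to sit, is the \emph{uniformity}: the corollary demands \emph{fixed} formulae that work for every ground model at once, whereas Theorem~\ref{thm:HTM3} as stated concerns a single pair $(\mathcal A,\mathcal B)$. To handle this I would invoke the uniform refinement of that theorem already flagged in this section, namely the extension of \cite{HTM3} and \cite{HTM2} to interpretations holding uniformly across a whole class of structures rather than for one $\mathcal B$ in isolation. The observation that makes this applicable is that $\mathcal{F}$ is realized by the \emph{single} functional pair $(\Phi,\Phi_*)$, whose behavior does not depend on which ground model is presented on the oracle tape; moreover the extraction of the interpreting formulae from a functor in the proof of Theorem~\ref{thm:HTM3} is itself uniform in an index for the functor. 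Hence a single index for $(\Phi,\Phi_*)$ produces one finite list of computable infinitary $\Sigma_1$-formulae valid across the entire category. I would close by noting that these are the desired formulae, provided $G$ is built from the given presentation exactly as described above, since that is precisely the hypothesis under which $\Phi$ computes $\Delta_0(M[G])$ and $\mathcal{F}$ is the functor to which the theorem is applied.
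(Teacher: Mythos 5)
Your proposal is correct and follows essentially the same route as the paper, which presents the corollary as a direct application of Theorem~\ref{thm:HTM3} (in its uniform version) to the computable functor realized by the single pair $(\Phi,\Phi_*)$ constructed in that section. The paper gives no further written proof beyond that observation, and your attention to the uniformity point is exactly the right emphasis.
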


\section{Non-functoriality}\label{Section.Non-functoriality}

In the main theorem, we proved that there is a computable procedure 
 $$(M,\in^M,\P)\mapsto G$$
that takes as input the atomic diagram of a model of set theory $\<M,\in^M,\P>$ with a distinguished notion of forcing $\P\in M$, and produces an $M$-generic filter $G\of\P$. We had observed, however, that the particular filter $G$ arising from this process can depend on how exactly the atomic diagram of $M$ is presented to us. If we rearrange the copy of $M$ as it is represented on the natural numbers (to be used as an oracle for the computation), then this can affect the order in which the dense sets $d_s$ appear and therefore affect which conditions $p_s$ arise in the construction. In short, the computational procedure we provided does not respect isomorphisms of presentations, since different isomorphic presentations of the same model can lead to non-isomorphic generic filters. Thus the process we described for computing the generic filter is not functorial in the category of presentations of models of set theory under isomorphism.

We claimed in Section \ref{sec:functoriality} that in this signature this phenomenon is unavoidable. To justify that claim, we now show that even if one allows the full elementary diagram of the model $M$ as input, there is no effective procedure that will pick out the same generic filter in all presentations of the same model.

\begin{theorem}\label{Theorem.No-Turing-functor}
 If \ZF\ is consistent, then there is no computable procedure that takes as input the elementary diagram of a model of set theory $\<M,\in^M,\P>$ with a partial order $\P$ and produces an $M$-generic filter $G\of \P$, such that isomorphic copies of the input model result always in the same corresponding isomorphic copy of $G$.
\end{theorem}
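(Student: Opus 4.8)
The plan is to build, from a single model $M$, two isomorphic presentations on which any purported functorial procedure would be forced to produce generic filters that cannot correspond under the presentation isomorphism. The key leverage is a \emph{symmetry} of a forcing notion: if $\P$ has a nontrivial automorphism $\pi$ that moves some generic-relevant data around, then composing a presentation with $\pi$ gives a different presentation of the \emph{same} model, and functoriality would demand that the output filter be carried along correspondingly. First I would fix a model $\<M,\in^M>\satisfies\ZF$ and choose a concrete forcing notion $\P\in M$ admitting a rich automorphism group: the natural candidate is Cohen forcing $\Add(\omega,1)^M$, or more flexibly the forcing to add two mutually generic Cohen reals, $\Add(\omega,1)\times\Add(\omega,1)$, whose swap automorphism $\pi$ exchanges the two coordinates. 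The swap is an automorphism of $\P$ inside $M$, hence induces an automorphism of the whole structure $\<M,\in^M,\P>$ fixing $M$ setwise but permuting the conditions of $\P$.

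Next I would exploit the fact, stressed in the discussion after Theorem~\ref{Theorem.Computing-generic-filter}, that the filter produced by any such procedure is determined by the order in which conditions and dense sets are encountered in the presentation, i.e.\ by the external enumeration of the domain $\omega$. Suppose toward a contradiction that there is a computable procedure $\Phi$ producing from the elementary diagram of $\<M,\in^M,\P>$ an $M$-generic $G\of\P$, and doing so functorially: whenever $f:\mathcal N\to\mathcal N'$ is an isomorphism of presentations, the induced map carries $\Phi(\mathcal N)$ to $\Phi(\mathcal N')$. Apply this to the two presentations $\mathcal N$ and $\mathcal N'$ that are identical as presentations of $\<M,\in^M,\P>$ except that in $\mathcal N'$ the roles of the two Cohen coordinates have been interchanged by $\pi$; equivalently, $\mathcal N'=\pi\compose\mathcal N$, and $\pi$ itself is the presentation isomorphism. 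Since $\pi$ fixes the elementary diagram (it is an automorphism of the structure), $\Phi$ must return the \emph{same} underlying filter $G$ on both presentations --- the diagrams handed to $\Phi$ are literally equal. But functoriality through the isomorphism $\pi$ forces $\Phi(\mathcal N')=\pi\image\Phi(\mathcal N)=\pi\image G$. Hence $G=\pi\image G$, i.e.\ $G$ is symmetric under the swap. The contradiction is then that no $M$-generic filter for the product Cohen forcing can be fixed by the swap automorphism: if $c_0,c_1$ are the two coordinate generics read off from $G$, symmetry under $\pi$ forces $c_0=c_1$, contradicting their mutual genericity (the diagonal set $\{(p,q):p=q\}$ is nowhere dense, so density guarantees $G$ enters its complement and $c_0\neq c_1$).

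The heart of the matter, and the step I expect to be the main obstacle, is the precise bookkeeping of how the presentation isomorphism acts on the \emph{output} of $\Phi$. Functoriality as stated means the procedure commutes with isomorphisms of presentations; I must verify that the automorphism $\pi\in M$ of the structure genuinely induces the presentation isomorphism that carries $\Phi(\mathcal N)$ to $\pi\image\Phi(\mathcal N)$, and simultaneously fixes the oracle, so that both sides of the equation $G=\pi\image G$ really refer to the same computation. The subtlety is that $\pi$ is an \emph{internal} automorphism (an element of $M$) while the presentation isomorphism is an \emph{external} relabeling of $\omega$; I would argue that because $\pi$ is definable in the structure and fixes the elementary diagram, the external relabeling it induces is the identity on the oracle but nontrivial on the named conditions, which is exactly what produces the clash. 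Once this is pinned down, the genericity contradiction is immediate from the nowhere-density of the diagonal. This argument adapts verbatim to rule out any $\Delta^0_\alpha$ (lightface Borel) procedure in place of a computable one, since it uses only that $\Phi$ is a function of the oracle respecting the presentation isomorphism, establishing the Borel nonfunctoriality asserted in Main Theorem~\ref{MainTheorem.Nonfunctoriality}.
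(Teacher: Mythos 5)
Your argument has a fatal gap at its very first step: the swap map $\pi$ on $\Add(\omega,1)\times\Add(\omega,1)$ is an automorphism of the \emph{abstract partial order} $(\P,\leq_\P)$ computed inside $M$, but it is \textbf{not} an automorphism of the structure $\<M,\in^M,\P>$, and it does not induce any isomorphism of presentations. The conditions of $\P$ are elements of $M$, and their ordering is encoded through $\in^M$ (as Kuratowski pairs, coordinates, etc.); an isomorphism of presentations must preserve $\in^M$ on all of $M$. By $\in$-induction, any $\in$-automorphism of an extensional structure fixes every element of the well-founded part pointwise, and the conditions $(\<0>,\emptyset)$ and $(\emptyset,\<0>)$ that $\pi$ must exchange are standard hereditarily finite sets. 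So no automorphism of $\<M,\in^M>$ restricts to the swap, whether $M$ is well-founded or not. Consequently your two presentations $\mathcal N$ and $\mathcal N'=\pi\compose\mathcal N$ are not related by $\pi$ as structures: their elementary diagrams are \emph{not} ``literally equal'' (the diagram can distinguish the two coordinates of a condition by a formula), and the equation $G=\pi\image G$ never arises. Indeed, a functorial procedure is perfectly free to treat the two Cohen coordinates asymmetrically, since the elementary diagram distinguishes them; symmetry of the forcing yields no contradiction. Your closing remark that the relabeling is ``the identity on the oracle but nontrivial on the named conditions'' is exactly where the argument breaks: a relabeling that moves elements but fixes the oracle would have to be an $\in$-automorphism, which $\pi$ is not.

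The paper's proof uses an entirely different mechanism. Assuming $\Con(\ZF)$, it produces (via \Levy--Montague reflection and compactness) a countable $M\satisfies\ZF$ with $M_\kappa\elesub M$ for some cardinal $\kappa$ of $M$, and fixes a nontrivial $\P\in M_\kappa$. The elementary diagram of $M_\kappa$ is a \emph{set} in $M$, so $M$ can internally survey all finite pieces of presentations of that diagram. Functoriality --- i.e., presentation-independence of the output --- is precisely what guarantees that for each condition $p$ every minimal finite fragment deciding ``$p\in G$'' gives the same verdict, so $M$ can compute $G$ by internal search; hence $G\in M$. Since every subset of $\P$ in $M$ already lies in the rank-initial segment $M_\kappa$, this $G$ is $M$-generic yet an element of $M$, which is impossible for nontrivial forcing. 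If you want to repair your write-up, this reflection-plus-internal-simulation idea is the ingredient you are missing; the automorphism idea cannot be made to work in this signature.
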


\noindent In other words, there is no computable procedure to produce generic filters that is functorial in the category of presentations of models of set theory under isomorphism.

\begin{proof}
Assume toward contradiction that we have a computable procedure 
 $$\Phi:\Delta(M,\in^M,\P)\quad\mapsto\quad G,$$ 
where we assume $M=\N$ and $\<M,\in^M,\P>\satisfies\ZF$ and $\P\in M$ is a partial order in $M$, such that $G\of\P$ is $M$-generic and this process is functorial, in the sense that isomorphic presentations of $\<M,\in^M>$ lead always to the same isomorphic copy of $G$.

We assumed that \ZF\ is consistent. It follows by the \Levy--Montague reflection principle and a simple compactness argument that there is a countable model $M\satisfies\ZF$ such that $M_\kappa\elesub M$ for some cardinal $\kappa$ in $M$, where $M_\kappa$ means the rank-initial segment $(V_\kappa)^M$.\footnote{In general, $M$ may be $\omega$-nonstandard. But note that the existence of well-founded $M$ with $M_\kappa \elesub M$ is consistent relative an inaccessible cardinal (indeed, less is necessary). However, this assumption is higher in consistency strength than just asking for a well-founded model of set theory.}
Fix any nontrivial forcing notion $\P\in M_\kappa$, such as the forcing to add a Cohen real. 

The main idea of the proof is to try to run the computational process inside the model $M$. This doesn't make literal sense, of course, since in $M$ we do not have the elementary diagram of $M$ as a countable set. Nevertheless, in $M$ we do have the elementary diagram of $M_\kappa$, which although uncountable, is a set in $M$ and therefore has its diagram in $M$. It may be that $M$ is $\omega$-nonstandard and therefore may also have nonstandard-length formulae in its version of the diagram, which of course cannot be part of the real elementary diagram. But on the standard-length formulae, $M$ will agree with us on the elementary diagram of $M_\kappa$.

Inside $M$, consider all the ways that we might place finitely much information about the elementary diagram of $M_\kappa$. Specifically, we enumerate finitely many elements of $M_\kappa$ and then list off finitely many truth judgments about those elements, and use this as a partial oracle in the computational process of $\Phi$. 

If outside $M$ we were to consider a full presentation of the elementary diagram of $M_\kappa$, then the process $\Phi$ will result in complete judgments about membership in an $M_\kappa$-generic filter $G\of\P$, and furthermore these judgments will be independent of the presentation of the diagram we consider. It follows that for any particular condition $p\in\P$, there is a finite piece of the (full, actual) presentation that leads to a judgment by $\Phi$ either that $p\in G$ or that $p\notin G$. 

The key observation is that this finite piece of the full actual diagram of $M_\kappa$ will be an element of $M$ and therefore $M$ will be able to see the judgment that $\Phi$ makes on whether $p\in G$ or not. Furthermore, all minimal-length such pieces of the diagram of $M_\kappa$ in $M$ that decide whether $p\in G$ or not will agree on the outcome, since any such minimal-length piece will involve only standard-finite formulae and therefore in principle can be continued outside $M$ to a presentation of the full actual diagram of $M_\kappa$, which always gives the same result about membership in $G$. Consequently, by searching inside $M$ for these minimal-length supporting computations about $M_\kappa$, we conclude that $G\in M$. 

But this is impossible, since all subsets of $\P$ in $M$ are in $M_\kappa$, since $M_\kappa$ is a rank-initial segment of $M$, and so $G$ will actually be $M$-generic, while also an element of $M$, which is impossible for nontrivial forcings.
\end{proof}

It follows that, given suitable consistency assumptions, there also can be no functorial computable procedure for producing just the atomic diagram of the forcing extension $M[G]$, since there are some forcing notions $\P$, such as the self-encoding forcing defined in \cite{FuchsHamkinsReitz2015:Set-theoreticGeology}, that produce unique generic filters in their extensions; for such forcing notions over a well-founded ground model, the extensions are isomorphic if and only if the generic filters are identical.

In the argument for the proof of Theorem~\ref{Theorem.No-Turing-functor} we produced a specific model which witnessed the failure of a would-be Turing functor $\Phi$. It is natural to wonder whether given any $\Phi$ and given any countable model $M$ of set theory we can witness the failure of $\Phi$ with an isomorphic copy of $M$.

The following theorem answers this question in the negative. If we restrict to the pointwise definable models then we can produce generics in a functorial manner. A model of set theory is \emph{pointwise definable} if every element of the model is definable without parameters. For example, the Shepherdson--Cohen minimum transitive model of \ZF\ is pointwise definable.

\begin{theorem}
There is a computable functor $\Phi$, in which $\Phi$ takes as input the elementary diagram of any pointwise definable model $\<M,\in^M> \satisfies \ZFC$ and a forcing notion $\P \in M$ and returns an $M$-generic $G \of \P$ and the elementary diagram of $M[G]$. That is, if $\<M^*,\in^{M^*}>$ and $\<M^\dagger,\in^{M^\dagger}>$ are two isomorphic presentations of $M$ then $\Phi(M^*,\in^{M^*},\P^*) \cong \Phi(M^\dagger,\in^{M^\dagger},\P^\dagger)$.
\end{theorem}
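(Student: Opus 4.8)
The plan is to replace every presentation-dependent choice in the constructions of Theorems~\ref{Theorem.Computing-generic-filter}--\ref{Theorem.Computing-diagram-of-extension} with a canonical choice driven by definitions, exploiting that a pointwise definable model is rigid. First I would record the two structural facts that make pointwise definability decisive here. Since every element of $M$ is the unique witness of some parameter-free formula, $M$ admits no nontrivial automorphism; consequently, between any two presentations $M^*$ and $M^\dagger$ of $M$ there is a \emph{unique} isomorphism $f$, and $f$ preserves definability, in the sense that an element $a\in M^*$ and its image $f(a)\in M^\dagger$ satisfy exactly the same parameter-free formulae. This uniqueness is what will let me define the functor's action on morphisms: it sends $f$ to the induced isomorphism of the extensions.

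Next I would define, from the elementary diagram, a canonical naming of elements. For each element $a$, search through formulae $\varphi(x)$ in order of \Godel\ number and check, using the elementary diagram, whether $M\satisfies \exists! x\ \varphi(x)\land\varphi(a)$; pointwise definability guarantees this search halts, and its output $\mathrm{def}(a)$ is the least formula defining $a$. Distinct elements receive distinct defining formulae, so $\mathrm{def}$ induces a presentation-independent linear order on $M$ that is preserved by $f$. Every choice in the construction is then made by least defining formula rather than by the external order on $\omega$: enumerate the dense subsets of $\P$ as $D_0,D_1,\ldots$ in order of their defining formulae (every dense set is definable, so all are caught); build the descending sequence $p_0\geq_\P p_1\geq_\P\cdots$ by letting $p_{n+1}$ be the condition in $D_{n+1}$ below $p_n$ of least defining formula, which exists by density; and set $G$ to be the upward closure of the $p_n$, which is $M$-generic and decidable exactly as in Theorem~\ref{Theorem.Computing-generic-filter}.

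To present the extension I would likewise order the $\P$-names by least defining formula rather than by the presentation, assigning natural numbers to the $=_G$-classes in this canonical order and then computing the atomic and elementary diagrams via Theorems~\ref{Theorem.Computing-atomic-diagram-of-extension} and \ref{Theorem.Computing-diagram-of-extension}. Because $f$ preserves defining formulae, it carries the sequence $\<p_n>$ computed in $M^*$ to the one computed in $M^\dagger$, hence carries $G^*$ to $G^\dagger$ and the relation $=_{G^*}$ to $=_{G^\dagger}$; since the name order and the class-numbering depend only on $\mathrm{def}$-data and on diagram facts that $f$ preserves, the two output presentations are in fact literally identical as structures on $\omega$. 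Thus the functor sends every isomorphism $f$ to the identity on the common output, which trivially respects composition and identities, giving the required isomorphism $\Phi(M^*,\in^{M^*},\P^*)\iso\Phi(M^\dagger,\in^{M^\dagger},\P^\dagger)$.

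The main obstacle, I expect, is not any single deep step but the discipline of verifying presentation-independence at \emph{every} choice point: one must check that the defining-formula search always terminates (this is precisely pointwise definability), that the induced order on names has no ties (a defining formula pins down a unique element), and that the class-numbering of $M[G]$ reads off only from $\mathrm{def}$-data and the elementary diagram, so that it is genuinely invariant under the unique isomorphism $f$. Once these are in place, functoriality and computability follow by assembling the earlier theorems.
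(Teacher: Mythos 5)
Your proposal is correct and follows essentially the same route as the paper: the paper's proof likewise uses the elementary diagram to compute, for each element, its least parameter-free definition, producing a canonical listing of the elements of $M$ that is preserved by any isomorphism of presentations, and then runs the generic-filter construction of Theorem~\ref{Theorem.Computing-generic-filter} against that canonical listing rather than the external order on $\omega$. Your additional bookkeeping (ordering the $\P$-names and the $=_G$-classes by least defining formula so that the outputs are literally identical) is a fine way of spelling out the "straightforward" extension step the paper leaves to the reader.
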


\begin{proof}
The key step in getting functoriality is to ensure that isomorphic presentations give rise to corresponding isomorphic generic filters. That is, if $\pi : M^* \to M^\dagger$ is an isomorphism we want that $\pi\image G^* = G^\dagger$, where $G^*$ and $G^\dagger$ are the generics produced by $\Phi$. Given this it is then straightforward that the isomorphism between $M^*$ and $M^\dagger$ extends to an isomorphism between $M^*[G^*]$ and $M^\dagger[G^\dagger]$. 

Let us see how to produce the generic. There is a canonical computable listing of the possible definitions in the language of set theory in order-type $\omega$. From the elementary diagram of $M$ we can decide which element is defined by which definition. So using the fact that $M$ is pointwise definable we can produce a listing $m_0, m_1, \ldots$ of the elements of $M$, starting with the element defined by the zeroth definition, then the element defined by the first definition, and so on. This listing is canonical, in the sense that isomorphic copies of $M$ will give rise to the same listing. More formally: if $\pi : M^* \to M^\dagger$ is an isomorphism then $\pi(m_i^*) = m_i^\dagger$ for all $i \in \omega$. We then use this listing to list out the dense subsets of $\P$ and to define the generic, as in the argument for Theorem~\ref{Theorem.Computing-generic-filter}. Because isomorphic copies of $M$ will use the same listing of their elements, this process will produce the same generic filter. 
\end{proof}

We claim that the non-functoriality result extends beyond computability to the case of Borel processes. Note that in the Borel context, from the atomic diagram of a model we can recover its full elementary diagram by a Borel function, and so the distinction between those cases evaporates.

\begin{theorem}\label{thm:Borel}
Suppose \ZF\ is consistent. Then there is no Borel function 
  $$(M,\in^M,\P)\quad\mapsto\quad G$$
 mapping codes for countable $\<M,\in^M,\P> \models \ZF$ with a forcing notion $\P\in M$ to an $M$-generic filter $G\of\P$, such that isomorphic models lead always to the same (isomorphic) filter. Indeed, we cannot even get such a Borel function so that if $\<M^*,\in^{M^*},\P^*>$ and $\<M^\dagger,\in^{M^\dagger},\P^\dagger>$ are elementarily equivalent then so are $\<M^*[G^*],\in^{M^*[G^*]}>$ and $\<M^\dagger[G^\dagger],\in^{M^\dagger[G^\dagger]}>$. 
\end{theorem}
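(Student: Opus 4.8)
The plan is to keep the overall architecture of the proof of Theorem~\ref{Theorem.No-Turing-functor} and to replace the one feature of that argument that was genuinely computational. There, the key point was that a halting computation consults only a finite initial segment of the oracle, and that finite segment is an element of $M$. For a Borel map the corresponding regularity property is the Baire property: a Borel function is continuous on a comeager set, so for comeager many presentations the value of the function on any fixed coordinate is already decided by a basic open neighborhood, i.e.\ by a finite piece of the presentation. So I would again fix, via the \Levy--Montague reflection principle and compactness, a countable $M\satisfies\ZF$ (possibly $\omega$-nonstandard) with a rank-initial segment $M_\kappa=(V_\kappa)^M\elesub M$ for some cardinal $\kappa\in M$, together with a nontrivial $\P\in M_\kappa$. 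For the final, elementary-equivalence form of the statement I would take $\P$ to be a self-encoding forcing, so that the first-order type of an extension $M_\kappa[G]$ pins down the filter $G$; this is what lets me reduce the elementary-equivalence hypothesis on the would-be Borel function to the bare isomorphism-invariance hypothesis on the presentations of the single model $M_\kappa$.

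Next I would extract a canonical object. Feeding the hypothetical Borel $\Phi$ the various presentations of $M_\kappa$ and using that models of \ZF\ are rigid, isomorphism-invariance forces every output filter to be the image, under the unique presentation isomorphism, of one fixed abstract filter $\Gbar\of\P$, and $\Gbar$ is $M_\kappa$-generic because each output is. The assignment sending a presentation of $M_\kappa$ to the corresponding presentation of $\Gbar$ is then a Baire-measurable functor $\mathrm{Iso}(M_\kappa)\to\mathrm{Iso}(\Gbar)$, so Theorem~\ref{thm:HTM2} applies and produces an infinitary interpretation of $\Gbar$ inside $M_\kappa$: there is an $\mathcal L_{\omega_1,\omega}$-formula $\psi$, computable from the Borel code of $\Phi$, with $\Gbar=\{p\in\P: M_\kappa\satisfies\psi(p)\}$. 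This packages ``the decision $\Phi$ makes about $p\in G$'' into a single definable object over $M_\kappa$, which is exactly the thing to be reflected into $M$.

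The heart of the argument, and the step I expect to be the main obstacle, is to turn this external definability of $\Gbar$ over $M_\kappa$ into the conclusion $\Gbar\in M$, which is contradictory: since $M_\kappa$ is a rank-initial segment containing $\P$, any $\Gbar\in M$ with $\Gbar\of\P$ already lies in $M_\kappa$, yet an $M_\kappa$-generic filter for an atomless $\P$ cannot belong to $M_\kappa$, as it is missed by the dense set $\{q\in\P:q\notin\Gbar\}$. To place $\Gbar$ in $M$ I would have $M$ evaluate the single relevant instance of $\psi$ on its own (internally uncountable) copy of the elementary diagram of $M_\kappa$, using that $M$ agrees with the truth on all standard-length formulas. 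The two difficulties that distinguish this from the computable case are precisely where the work lies: the Borel map is boldface, so $\psi$ is computable only relative to a real parameter that must be made accessible to $M$, and $M$ may be $\omega$-nonstandard, so one must guarantee that the evaluation uses only standard syntactic data. I would handle both exactly as in Theorem~\ref{Theorem.No-Turing-functor}: search inside $M$ for minimal witnesses deciding ``$p\in\Gbar$'', observe that any minimal witness involves only standard-finite information and therefore in principle extends, outside $M$, to a genuine presentation of $M_\kappa$ on which $\Phi$ returns the invariant answer, and conclude that $M$'s reconstruction agrees with the true $\Gbar$. With $\Gbar\in M$ we reach the contradiction. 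Finally, since from a code for a model one can compute its full elementary diagram by a Borel function, the theorem for atomic diagrams follows from the elementary case, and the self-encoding choice of $\P$ upgrades the isomorphism-invariance statement to the elementary-equivalence statement claimed in the ``indeed'' clause.
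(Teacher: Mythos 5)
Your proposal takes a genuinely different route from the paper's proof, and it has a gap at exactly the step you yourself flag as the heart of the argument. The paper's proof (due to Schlicht) is not a reflection argument: it fixes a pointwise definable model $N$ of $\ZF+V=L$, forces a Cohen real $x$ over a countable hull $H$ containing the Borel parameter, uses a lottery-sum forcing that codes a finite variant of $x$ into the continuum pattern below $\aleph_\omega$, and then modifies $x$ at a single coordinate beyond the use of the relevant conditions to obtain two isomorphic presentations of $M=N[x]$ whose outputs have different continuum patterns and hence inequivalent theories. Your plan instead ports the proof of Theorem~\ref{Theorem.No-Turing-functor}, and the fatal step is ``$\Gbar$ is $\mathcal{L}_{\omega_1,\omega}$-definable over $M_\kappa$, hence $\Gbar\in M$.'' The existence of the interpretation formula $\psi$ is not by itself contradictory: Theorem~\ref{thm:HTM2} produces $\psi$ only computably in the boldface Borel parameter, and a parameter can perfectly well code an $M_\kappa$-generic filter for the particular countable $M_\kappa$ at hand, so a single generic filter is always ``definable'' in this sense. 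The mechanism that rescues the computable case---minimal finite oracle segments deciding membership, all lying in $M$ and all forced to agree---is specific to Turing functionals, i.e.\ to $\Sigma_1$ computable infinitary formulas. A Borel function is continuous only on a comeager set, which is defined from the parameter and is invisible to $M$; off that set distinct finite pieces deciding ``$p\in\Gbar$'' need not agree, and at levels $\Delta^0_\alpha$ with $\alpha>1$ there are no finite certificates at all. Saying you would ``handle both exactly as in Theorem~\ref{Theorem.No-Turing-functor}'' does not engage with this.

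There are two further problems. You assert that models of $\ZF$ are rigid in order to extract the canonical abstract filter $\Gbar$ and to upgrade the object-map to a functor to which Theorem~\ref{thm:HTM2} applies. A countable $M$ with $M_\kappa\elesub M$ obtained by compactness from $\Con(\ZF)$ is typically $\omega$-nonstandard and may have many automorphisms (countable recursively saturated models do); rigidity requires well-foundedness or pointwise definability, and a well-founded such $M$ exceeds the consistency strength $\Con(\ZF)$ that the theorem assumes. Second, your reduction of the elementary-equivalence clause to the isomorphism clause via a self-encoding forcing needs the theory of $M_\kappa[G]$ to determine $G$; over an $\omega$-nonstandard ground model this fails for nonstandard bits of the generic unless every nonstandard natural number is definable, which is precisely why the paper's argument is arranged over a pointwise definable model of $V=L$.
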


We first proved this result with a mild extra consistency assumption. Philipp Schlicht privately communicated to us an argument which needs only the minimum assumption that \ZF\ is consistent. With his permission, it is his argument we present here.

\begin{proof}[Proof (Schlicht)]
Suppose that there is a Borel function $\Phi(M,\in^M,\P)=G$, defined from some real parameter $y$, such that if $\<M^*,\in^{M^*},\P^*>$ and $\<M^\dagger,\in^{M^\dagger},\P^\dagger>$ are codes for elementarily equivalent countable models of \ZF\ (coded, say, as a binary relation $\in^M$ on the natural numbers $M=\N$) equipped with forcing notions $\P^*\in M^*$ and $\P^\dagger \in M^\dagger$, then if $G^* = \Phi(M^*,\in^{M^*},\P^*)$ and $G^\dagger = \Phi(M^\dagger,\in^{M^\dagger},\P^\dagger)$ then $\<M^*[G^*],\in^{M^*[G^*]}>$ and $\<M^\dagger[G^\dagger],\in^{M^\dagger[G^\dagger]}>$ are elementarily equivalent. In particular, if $\<M^*,\in^{M^*}>$ and $\<M^\dagger,\in^{M^\dagger}>$ are isomorphic, then $\Phi$ produces elementarily equivalent forcing extensions. 

The desired counterexample will be a pair of isomorphic presentations of a Cohen-generic extension of a pointwise definable model of \ZF\ + $V = L$. Observe that the existence of such follows from the assumption that \ZF\ is consistent: A famous result due to G\"odel gives us a model of \ZF\ + $V = L$ and using that model's definable global well-order we get Skolem functions so that the Skolem hull of the empty set is pointwise definable. Call this pointwise definable model \hbox{$\<N,\in^N>$}.

Work in $N$. Let $\Q \in N$ denote the Cohen forcing poset to add a single real (in the sense of $N$) and let $\rho \in N$ be a $\Q$-name for the set of finite variants of the generic real. Consider $\dot\P$ a $\Q$-name so that $\one_\Q$ forces that $\dot\P$ is the poset which chooses an element of $\rho$ by lottery and codes that real into the continuum pattern below $\aleph_\omega$, via adding Cohen-generic subsets to the $\aleph_n$.\footnote{Given a collection of forcing notions $\P_i$, their \emph{lottery sum} is the poset which contains the $\P_i$ as suborders, conditions from different $\P_i$ being incomparable, and with a new maximum element placed above each $\P_i$. A generic for the lottery sum then chooses one of the $\P_i$ by lottery and produces a generic for it. See \cite[Section 3]{Hamkins2000:LotteryPreparation} for a precise definition.}
 Observe that if the generic real is modified on a finite domain, then this forcing does not change. 

In $V$, let $H$ be the transitive collapse of a countable elementary submodel of some large enough $H_\theta$ which contains $y$ and a code for $N$. Observe that we can think of $\Q$ as a forcing poset in $H$. Of course, $N$ may be $\omega$-nonstandard and so Cohen forcing in the sense of $N$ need not be the real Cohen forcing. Nevertheless, $H$ sees that $\Q$ is a poset, as that is absolute even for nonstandard models of set theory. Now let $x$ be $H$-generic for $\Q$. Then $x$ must also be $N$-generic for $\Q$. Set $M = N[x]$ and $\P = \dot\P^x \in M$. Then $\<M,\in^M,\P>$ will yield the desired counterexample.

Because $M$ is countable in $H[x]$, it has a real code. Let $\mu,\varepsilon,\pi \in H$ be $\Q$-names so that $\<M^*, \in^{M^*}, \P^*> = \<\mu^x,\varepsilon^x,\pi^x>$ is an isomorphic copy of $\<M,\in^M,\P>$ on the natural numbers. Moreover, fix a condition $q \in x$ which forces this. Then \hbox{$\<M^*, \in^{M^*}, \P^*>$} is appropriate as input to $\Phi$. Because $\Phi$ is Borel as defined using a real parameter $y \in H$, we have that $H[x]$ can compute $\Phi(M^*, \in^{M^*}, \P^*)$. Moreover, in the forcing extension of $\<M^*, \in^{M^*}, \P^*>$ via the generic output by $\Phi$, call it $M^*[G^*]$, we can check the continuum pattern below $\aleph_\omega$ to determine which finite variant of $x$ was used in the lottery sum.

Take $p$ in this generic $G^*$ which forces that the finite variant chosen to code into the continuum pattern is $x \mathbin{\triangle} a$.  Now define $x'$ so that $x'(i) \ne x(i)$ for $i = \max(a \cup \set{\lvert p \rvert, \lvert q \rvert})+1$ and $x'$ agrees with $x$ on all other coordinates. Then $x'$ extends $q$ and is $H$-generic for $\Q$, and thus also $N$-generic. In particular, in $N[x']$ we have that $p$ forces the continuum pattern in the extension to be $x' \mathbin{\triangle} a$.
And since $x'$ and $x$ differ on a single coordinate, $M = N[x] = N[x']$ and $\P = \dot \P^x = \dot \P^{x'}$. Thus, $\<M^\dagger,\in^{M^\dagger}, \P^\dagger> = \<\mu^{x'}, \varepsilon^{x'}, \pi^{x'}>$ is isomorphic to $\<M^*,\in^{M^*},\P^*>$. Note now that $p \in G^\dagger = \Phi(M^\dagger,\in^{M^\dagger}, \P^\dagger)$ because $H[x] = H[x']$ and $x'$ was defined to agree with $x$ up to the amount of information needed about $\mu$, $\varepsilon$, and $\pi$ to have $\Phi$ put $p$ in the generic. But the models $M^*[G^*]$ and $M^\dagger[G^\dagger]$ have different continuum patterns, disagreeing at $\aleph_i$. 

The statements ``\GCH\ holds at $\aleph_n$'' or ``\GCH\ fails at $\aleph_n$'' show up in the theory of a model for standard natural numbers $n$. But if $M^*[G^*]$ and $M^\dagger[G^\dagger]$ are $\omega$-nonstandard and $i$ is nonstandard this may not be enough. This is where we use the assumption that the models extend a pointwise definable model of $V=L$; it follows that any nonstandard natural number in these models is definable. Thus, the statements show up in their theory, even if $n$ is nonstandard. Thus they cannot be elementarily equivalent, giving us the desired counterexample.
\end{proof}

Meanwhile, if we go to the projective level, then there will (consistently) be a functorial process. For example, if $V=L$, then in $L$ given any countable oracle code for a structure, we can find the $L$-least isomorphic copy in a projective way, specifically at the level $\Delta^1_2$. If we now build $M[G]$ using this copy, it will be constant on the isomorphism class of $M$, which is a very strong way of respecting isomorphism. Whether we can push this lower down in the projective hierarchy remains open.

\begin{question}
Is there an analytic (or co-analytic) functorial method to produce generic filters for models of set theory?
\end{question}

\printbibliography

\end{document}